\documentclass[a4paper,10pt,twoside,reqno]{amsart}
\usepackage[utf8]{inputenc}
\usepackage{amsthm,amsfonts,amssymb,amsxtra,appendix,bookmark,euscript,delarray,dsfont,mathrsfs,stackrel,xifthen,mathtools}
\usepackage[normalem]{ulem}
\usepackage{paralist}
\usepackage{enumitem}
\usepackage[dvipsnames]{xcolor}
\usepackage{latexsym,amssymb}
\usepackage{mathrsfs}
\usepackage{longtable}
\usepackage{lmodern}
\usepackage{microtype}
\usepackage{amsmath}
\usepackage{extarrows}
\usepackage{comment}
\usepackage{mhequ}
\usepackage{caption}
\usepackage{subcaption}
\usepackage{bbm}

\usepackage{amsmath,amssymb,amsthm,hyperref,mathtools,esint,enumitem}
\usepackage{empheq}
\hypersetup{linktoc=all,colorlinks=true,linkcolor=blue,citecolor=blue,urlcolor=blue}

\DeclareMathAlphabet{\mathlcal}{U}{dutchcal}{m}{n} 

\DeclarePairedDelimiter{\abs}{\lvert}{\rvert}
\DeclarePairedDelimiter{\norm}{\lVert}{\rVert}
\DeclarePairedDelimiter{\bra}{(}{)}
\DeclarePairedDelimiter{\pra}{[}{]}

\providecommand\given{}
\DeclarePairedDelimiterXPP\condprob[1]{\mathbb{P}}[]{}{
\renewcommand\given{\nonscript\:\delimsize\vert\nonscript\:\mathopen{}}
  #1}
\DeclarePairedDelimiterXPP\condexpect[1]{\mathbb{E}}[]{}{
\renewcommand\given{\nonscript\:\delimsize\vert\nonscript\:\mathopen{}}
#1}
\DeclarePairedDelimiterXPP\condexpectwithinitial[2]{\mathbb{E}_{#1}}[]{}{
\renewcommand\given{\nonscript\:\delimsize\vert\nonscript\:\mathopen{}}
#2}
\providecommand\st{}
\DeclarePairedDelimiterXPP\set[1]{}\{\}{}{
\renewcommand\st{\nonscript\:\delimsize\vert\nonscript\:\mathopen{}}
  #1}

\usepackage{tikz-cd}
\usepackage{tikz}
\usepackage{cases}
\usepackage{amsmath}

\makeatletter
\@namedef{subjclassname@2020}{\textup{2020} Mathematics Subject Classification}
\makeatother
\setcounter{tocdepth}{3}
\usepackage{amsthm}
\usepackage{thmtools}
\usepackage[capitalize, nameinlink]{cleveref}

\theoremstyle{plain}
\newtheorem{theorem}{Theorem}[section]

\newtheorem*{theorem*}{Theorem}
\newtheorem{lemma}[theorem]{Lemma}
\newtheorem*{lemma*}{Lemma}
\newtheorem{corollary}[theorem]{Corollary}
\newtheorem{proposition}[theorem]{Proposition}

\newtheorem*{conjecture*}{Conjecture}
\newtheorem{assumption}[theorem]{Assumption}

\theoremstyle{definition}

\newtheorem{defn}[theorem]{Definition}

\theoremstyle{remark}
\newtheorem{remark}[theorem]{Remark}
\theoremstyle{remark}
\theoremstyle{definition}

\newtheorem*{nota*}{Notation}

\newlist{proofparts}{enumerate}{2}
\setlist[proofparts]{
  listparindent=\parindent,
  leftmargin=0pt,
  itemindent=*,
  parsep=0pt plus 1pt,
  }
\setlist[proofparts,1]{label=\textbf{Part \arabic*.}}
\setlist[proofparts,2]{label=\textbf{Part \arabic{proofpartsi}.\arabic*.}}

\usepackage{autonum} 

\numberwithin{equation}{section}

\renewcommand{\P}{\mathbf{P}}

\newcommand{\T}{\mathbb{T}}
\renewcommand{\d}{{\rm d}}

\newcommand{\eps}{\varepsilon}

\newcommand{\N}{\mathbb{N}}

\newcommand{\R}{\mathbb{R}}

\newcommand{\Z}{\mathbb{Z}}

\newcommand{\cF}{\mathcal{F}}

\newcommand{\cP}{\mathcal{P}}

\newcommand{\expect}{\mathbf{E}}

\newcommand{\dummy}{\textcolor{lightgray}{{\bullet}}}

\DeclareMathAlphabet{\mathup}{OT1}{\familydefault}{m}{n}
\DeclareMathOperator{\supp}{supp}

\DeclareMathOperator*{\argmax}{arg\,max}

\DeclareMathAlphabet{\mathup}{OT1}{\familydefault}{m}{n}
\newcommand{\dx}[1]{\mathop{}\!\mathup{d} #1}

\makeatletter
\newcommand{\customlabel}[2]{
   \protected@write \@auxout {}{\string \newlabel {#1}{{#2}{\thepage}{#2}{#1}{}} }
   \hypertarget{#1}{}
}
\makeatother

\numberwithin{figure}{section}
\makeatletter
\AddToHook{cmd/appendix/before}{\def\cref@section@alias{appendix}}
\makeatother

 \def\calE{{\mathcal E}} \def\calF{{\mathcal F}}

 \def\calN{{\mathcal N}} 
\def\calP{{\mathcal P}}  
  
 \def\calW{{\mathcal W}}

\input{widebar.tex}

\newcommand{\tcluster}{t_{\rm clustering}}
\newcommand{\tmetastab}{t_{\rm metastab}}
\newcommand{\tcoalesc}{t_{\rm coalesence}}
\newcommand{\tmicrorev}{t_{\rm reversibility}}
\newcommand{\mcrit}{m_{\rm crit}}

\RequirePackage{etex}
\usepackage{etoolbox} 
\expandafter\patchcmd\csname\string\proof\endcsname
  {\normalparindent}{0pt }{}{}

\crefname{enumi}{}{} 
\crefname{equation}{}{}

\cslet{blx@noerroretextools}\empty 
\usepackage[url=true,backend=biber,style=alphabetic,url=false,doi=false,isbn=false,maxnames=99,giveninits=true,maxalphanames=4,date=year]{biblatex}
\DeclareFieldFormat{titlecase}{\MakeSentenceCase*{#1}}
\renewbibmacro*{journal}{
  \iffieldundef{journaltitle}
    {}
    {\printtext[journaltitle]{
       \printfield[noformat]{journaltitle}
       \setunit{\subtitlepunct}
       \printfield[noformat]{journalsubtitle}}}}
\renewbibmacro{in:}{
	\ifentrytype{article}{}{\printtext{\bibstring{in}\intitlepunct}}}

\newcommand{\rhounif}{\rho_{\rm unif}} 
\newcommand{\rhopert}[1][]{{#1\rho}_{\rm pert}} 

\newcommand{\revfirst}[1]{{\color[RGB]{200,10,10,}#1}}

\addbibresource{references.bib}
\title[]{Formation of clusters and coarsening in weakly interacting diffusions}
\author[Gerber]{N. J. Gerber$^{\ast}$}
\thanks{$^{\ast}$Corresponding author, \href{mailto:nicolai.gerber@uni-ulm.de}{nicolai.gerber@uni-ulm.de}}
\address{N. J. Gerber -- Institute for Applied Analysis, Ulm University, Helmholtzstrasse 18, D-89081 Ulm, Germany}
\email{nicolai.gerber@uni-ulm.de}
\author[Gvalani]{R. S. Gvalani}
\address{R. S. Gvalani -- D-MATH, ETH Z\"urich, R\"amistra{\ss}e 101, 8092 Z\"urich}
\email{rgvalani@ethz.ch}
\author[Hairer]{M. Hairer}
\address{M. Hairer -- EPFL, Switzerland and Imperial College London, UK}
\email{martin.hairer@epfl.ch}
\author[Pavliotis]{G. A. Pavliotis}
\address{G. A. Pavliotis -- Department of Mathematics, Imperial College London, London SW7~2AZ, UK}
\email{pavl@ic.ac.uk}
\author[Schlichting]{A. Schlichting}
\address{A. Schlichting -- Institute for Applied Analysis, Ulm University, Helmholtzstrasse~18, D-89081 Ulm, Germany}
\email{andre.schlichting@uni-ulm.de}
\thanks{The research of RSG was partially supported by the Deutsche Forschungsgemeinschaft (DFG, German Research Foundation) under the Special Priority Programme 2410/1 “Hyperbolic Balance Laws in Fluid Mechanics: Complexity, Scales, Randomness”. GAP is partially supported by an ERC-EPSRC Frontier Research Guarantee through Grant No. EP/X038645, ERC Advanced Grant No. 247031 and a Leverhulme Trust Senior Research Fellowship, SRF$\backslash$R1$\backslash$241055. AS is supported by the Deutsche Forschungsgemeinschaft (DFG, German Research Foundation) under Germany's Excellence Strategy EXC 2044 --390685587, Mathematics M\"unster: Dynamics--Geometry--Structure.}
\date{\today}

\keywords{Clustering, coarsening, weakly interacting diffusions, mean-field limits, metastability, dynamical metastability, strict spherical Riesz rearrangement inequality}
\subjclass[2020]{} 
\allowdisplaybreaks
\begin{document}

\begin{abstract}
    This paper studies the clustering behaviour of weakly interacting diffusions under the influence of sufficiently localized attractive interaction potentials on the one-dimensional torus. We describe how this clustering behaviour is closely related to the presence of discontinuous phase transitions in the mean-field PDE. For local attractive interactions, we employ a new variant of the strict Riesz rearrangement inequality to prove that all global minimisers of the free energy are either uniform or single-cluster states, in the sense that they are symmetrically decreasing.

    We analyze different timescales for the particle system and the mean-field (McKean--Vlasov) PDE, arguing that while the particle system can exhibit coarsening by both coalescence and diffusive mass exchange between clusters, the clusters in the mean-field PDE are unable to move and coarsening occurs via the mass exchange of clusters. By introducing a new model for this mass exchange, we argue that the PDE exhibits dynamical metastability. 
    We conclude by presenting careful numerical experiments that demonstrate the validity of our model.

\end{abstract}

\maketitle
\setcounter{tocdepth}{2}

\section{Introduction}

Interacting particle systems and their mean-field limits appear in many applications ranging from physics to mathematical models for swarming, flocking, opinion dynamics, and collective behaviour. They are also used in the development of algorithms for sampling and optimization. Often, the emergence of collective behaviour, e.g.~consensus formation~\cite{HegselmannKrause2002} or synchronization~\cite{kuramoto1981rhythms}, can be interpreted as a (first- or second-order) phase transition from a disordered to an ordered state; see~\cite{CGPS20} and the references therein, in particular~\cite{CP10}.

Noise-driven interacting particle systems can exhibit metastable states that persist over long timescales. As examples, we mention the formation of clusters in opinion models~\cite{GarnierEtAl2017} as well as in mathematical models for transformers~\cite{GeshkovskiLetrouitPolyanskiyRigollet2025,GeshkovskiKoubbiPolyanskiyRigollet2024,bruno2024emergence}; see also~\cite[Section 6.1]{ShalovaSchlichting2025}. Noise-induced metastability in interacting particle systems is a manifestation of the presence of discontinuous phase transitions~\cite{PLMKT_1984,GvalaniSchlichting2020}. It is expected that, near a transition point, the mean-field description is no longer valid for arbitrarily long times~\cite{DGPS23} and that the effect of noise, through the Dean--Kawasaki SPDE~\cite{Kawasaki1973,Dean1996}, must be taken into account to capture metastable states accurately~\cite{WSMSPW_2025}. The primary objective of this work is to provide a rigorous mathematical understanding of the emergence of clusters in weakly interacting diffusions exhibiting first-order phase transitions and to derive a model for the dynamics of these clusters. 

In~\cite{PoschNarenhoferThirring1990}, the authors justify that the mean-field description for Hamiltonian systems is inadequate to describe all regimes in the Newtonian dynamics of particles interacting via a pairwise attractive Gaussian potential.
They observed that for sufficiently strong interactions, the initially diffused particles form clusters that eventually collapse to form a single cluster (or, as the authors of~\cite{PoschNarenhoferThirring1990} refer to it, the condensed phase). Although the mean-field Vlasov equation describes the final cluster state, which is a stationary state of the mean-field PDE, it is unable to explain the transient dynamics that leads to the formation of the condensed phase. We quote this main conclusion of theirs below:
\begin{quote}
    \emph{We find that the condensed phase is a stationary solution of the Vlasov equation, but the Vlasov dynamics cannot describe the collapse.}
\end{quote}

The goal of this paper is to study a similar phenomenology in the setting of weakly interacting diffusions with localised attractive interaction potentials.
The thermodynamic limit in our setting is the so-called McKean--Vlasov equation, and our main conclusion reads as follows:
\begin{quote}
    \emph{Although the single-cluster state is a stationary solution of the mean-field McKean-Vlasov PDE, coalescence of clusters is not captured by the mean-field dynamics.
    }
\end{quote}
It turns out that the weakly interacting diffusion model studied in this paper exhibits very rich and interesting dynamics.
While the interacting particle model in~\cite{PoschNarenhoferThirring1990} is deterministic and the entropy in the corresponding mean-field Vlasov equation is conserved, the model of interacting diffusions considered in this paper implies that the corresponding mean-field equation is a diffusion-aggregation equation that exhibits coarsening through mass exchange between clusters. This mass exchange leads to \emph{dynamical metastability} of the mean-field dynamics, which is related to the notion of slow motion of gradient flows studied in~\cite{Otto2007}.

We comment on some more general relations to similar effects observed in related models. First, we note that our identification of the two timescales for tunneling and coalescence is similar to the classical theory of \emph{Ostwald ripening}:  a configuration of near-equilibrium liquid droplets sitting on a precursor film that wets the entire substrate can coarsen in time by two different mechanisms: collapse or collision of droplets~\cite{GlasnerOttoRumpSlepcev2009}. Here, the phenomenological model is a gradient flow of Cahn--Hilliard type, where the dynamics is entirely deterministic. The authors provide a reduced model for the position and volume of single droplets in~\cite[Section~4]{GlasnerOttoRumpSlepcev2009}, which also allows them to study the crossover between the timescales of migration (leading to collapse) and ripening (through coalescence).
The relevant timescales can be identified using suitable energies, in this case related to surface tension, and it is possible to derive a differential inequality for them. This approach was pioneered in~\cite{KohnOtto2002} and later refined in~\cite{ContiNiethammerOtto2006,OttoRumpSlepcev2006,BrenierOttoSeis2011,OttoSeisSlepcev2013}.

In the physical literature, a study of diffusive aggregating particles can be found in~\cite{PototskyThieleArcher2014}, where the observed behaviour is studied on the PDE level. There, Ostwald ripening is related to volume modes induced by the escape of particles from clusters due to random fluctuations. However, a crucial difference from our study is that the translational modes causing movement and joining of clusters are again deterministically caused by long-range aggregation forces between the clusters, whereas in our model, the interaction potential has finite interaction range and clusters can move only due to Brownian motions.

We also comment on the link between the problem studied in our paper and Kramers' problem and its generalizations. Indeed, the mass leakage between clusters that occurs in the model that we derive in this paper is a generalization of Kramers' problem pioneered in~\cite{Kramers1940}, where a Brownian particle is confined in a double-well potential and the timescale is related to the energy barrier of the system. Our generalization requires considering potentials with multiple timescales, which in addition change dynamically as a result of the mass exchange.  In recent years, several different techniques for studying metastability in time-independent potentials have been developed, using both stochastic~\cite{mathieu1994zero,olivieri2005large,Berglund2013,MS2014,BH_2015} and analytic~\cite{Peletier2010,Herrmann2011,EvansTabrizian2016,Herrmann2019} methods. Our setting can be viewed as a combination of these, together with the analysis of Langevin-type dynamics for time-dependent potentials in the context of Brownian ratchets, e.g.,~\cite {Kinderlehrer2002}.

Finally, we briefly discuss the relevance of the Dean--Kawasaki SPDE for the study of the dynamics of clusters. This SPDE was used to study the statistics of the clusters~\cite{WSMSPW_2025}; see also~\cite{delfau2016pattern} for some earlier numerical experiments. The Dean--Kawasaki SPDE can be used, in principle, to study noise-induced metastability of our model by applying an infinite-dimensional extension of the Freidlin--Wentzell theory~\cite{freidlin1984random}, developed by Dawson and Gärtner~\cite{DG1989}. However, for the analysis of the relevant timescales in the models studied in this paper, we need to understand the combined effect of noise-induced cluster coalescence and of dynamical metastability due to slow motion for gradient flows~\cite{Otto2007}. Understanding the relationship between the different timescales induced by the different stochastic and deterministic effects is one of the main challenges that we address in this work.

\subsection{Setup of the model}
\label{sec:setup}

We consider the interacting particle system given by the system of $N$ stochastic differential equations
\begin{equation}\label{eq:microscopic-system}
    \d{X}_t^i = - \frac{1}{N}\sum_{j=1}^N \nabla W_{\gamma,\ell}\bra[\big]{{X_t^i-X_t^j}} \dx t + \sqrt{2} \dx B_t^i , \qquad\text{for } i =1,\dots ,N
\end{equation}
on the one-dimensional torus $\T \simeq [0,1)$. Here, $\gamma>0$ is the interaction strength, $\ell>0$ is the interaction range and $(B_t^i)_{i\in\N}$ are independent standard Brownian motions in $\R$.
For some fixed even function $w:\R\to\R$, the interaction potential $W_{\gamma,\ell}\colon\R\to\R$ is defined by
\begin{align}
    \label{eq:def:rescaledW}
    W_{\gamma,\ell}(x) := \gamma \ell w\bra*{\frac{x}{\ell}} \qquad \text{for } \abs{x}<\frac{1}{2} ,
\end{align}
and extended $1$-periodically to $\R$. We are interested in locally attractive interactions.
The results of our paper are valid under the following assumption on $w$:
\begin{assumption}
    \label{w-assumption}
    The function $w:\R \to (-\infty,0]$ satisfies:
    \begin{enumerate}[label=(\roman*)]
        \item $w$ is an even and Lipschitz continuous function which is symmetrically non-decreasing, that is, $w$ is non-decreasing on $[0,\infty)$.
        \item  $w$ has compact support, there exists some $s_w\in [0,\infty)$ such that $\supp w = [-s_w, s_w]$ and $w$ is bounded from above by $0$.
        \item $w$ is $C^2$ in a neighborhood of $0$ and $w$ attains its global minimum at $0$ with $w''(0) > 0$;
    \end{enumerate}
\end{assumption}
Typical examples include the Hegselmann--Krause family~\cite{GarnierEtAl2017}, where $w$ is such that
\begin{align}
    \label{eq:w-def:Hegselmann--Krause}
    w'(x) = \phi_0(\abs{x})x, \qquad \text{with } \phi_0:[0,\infty)\to [0,\infty) \text{ having compact support.}
\end{align}
The simplest example of this is
\begin{align}
    \label{eq:w-def:Hegselmann--Krause-special}
    w(x)= \frac{1}{2}({x}^2-1) \chi_{[0,1]}(\abs{x}),
\end{align}
that is, $\phi_0(r) = \chi_{[0,1]}(r)$ in~\eqref{eq:w-def:Hegselmann--Krause}.
We are especially interested in the \textit{strong local} interaction regime, i.e., $\gamma \gg 1$ and $\ell \ll 1$.

If $w$ is even, we can write~\eqref{eq:microscopic-system} in the form
\begin{align}
    \label{eq:microscopic-system:as-vector}
    \d \mathbf{X}_t = -\nabla\mathbf{W}_{\gamma,\ell}(\mathbf{X}_t) \dx t + \sqrt{2} \dx \mathbf{B}_t,
\end{align}
where $\mathbf{X}_t = (X_t^1, \dots, X_t^N)$,
$\mathbf{W}_{\gamma,\ell}(\mathbf{x}) = \frac{1}{2N}\sum_{j,k=1}^N W_{\gamma,\ell}(x_j-x_k)$ is $N$ times the microscopic interaction energy
and $\mathbf{B}_t = (B_t^1, \dots, B_t^N)$.

\subsubsection*{The mean-field limit}
If the initial positions $X^1_0, \dots, X^N_0$ are sampled i.i.d.\ from some $\rho_0\in\mathcal P(\R^d)$ with sufficiently many moments and if $w$ is sufficiently regular, then in the limit as the number of particles $N$ goes to infinity, the empirical measure $\mu^N_t:= \frac{1}{N}\sum_{i=1}^N \delta_{X^i_t}$ at time $t$ converges~\cite{MR1108185,ReviewChaintronI,ReviewChaintronII} to $\rho_t$, where $(\rho_t)_{t\ge 0}$ solves the mean-field McKean--Vlasov PDE
\begin{align}
    \label{eq:McKean-Vlasov-PDE}
    \partial_t \rho_t & = \Delta \rho_t + \nabla \cdot \bra*{\bra*{\nabla W_{\gamma,\ell} * \rho_t} \rho_t},
\end{align}
where $t\ge 0$ and $x\in\T$.
The PDE~\eqref{eq:McKean-Vlasov-PDE} may be seen as a $W_2$-gradient flow~\cite{CarrilloMcCannVillani2003} for the free energy
\begin{equation}
    \label{eq:def:free_energy}
    \cF_{\gamma,\ell}(\rho) = \int_{[0,1]} \rho(x) \log \rho(x)\dx x + \frac{1}{2} \iint_{[0,1]^2} W_{\gamma,\ell}\bra*{x-y} \rho(x) \rho(y) \dx x \dx y.
\end{equation}
The free energy acts as a Lyapunov functional for the evolution~\eqref{eq:McKean-Vlasov-PDE}:
\begin{align}
    \frac{\d}{\d t} \cF_{\gamma,\ell}(\rho_t) = - \int_{[0,1]} \rho_t \abs[\big]{\nabla \bra[\big]{\log \rho_t +  W_{\gamma,\ell} * \rho_t}}^2 \dx x \le 0.
\end{align}

\subsection{Summary of the effects}
\label{sec:sumnary}
In this work, we study the behaviour of various competing phenomena that are present in the interacting particle system~\eqref{eq:microscopic-system} and the McKean--Vlasov equation~\eqref{eq:McKean-Vlasov-PDE} and their associated timescales. We summarize these effects below:
\begin{figure}[ht!]
    \centering
    \begin{subfigure}[b]{0.49\textwidth}
        \centering
        \includegraphics[width=\textwidth]{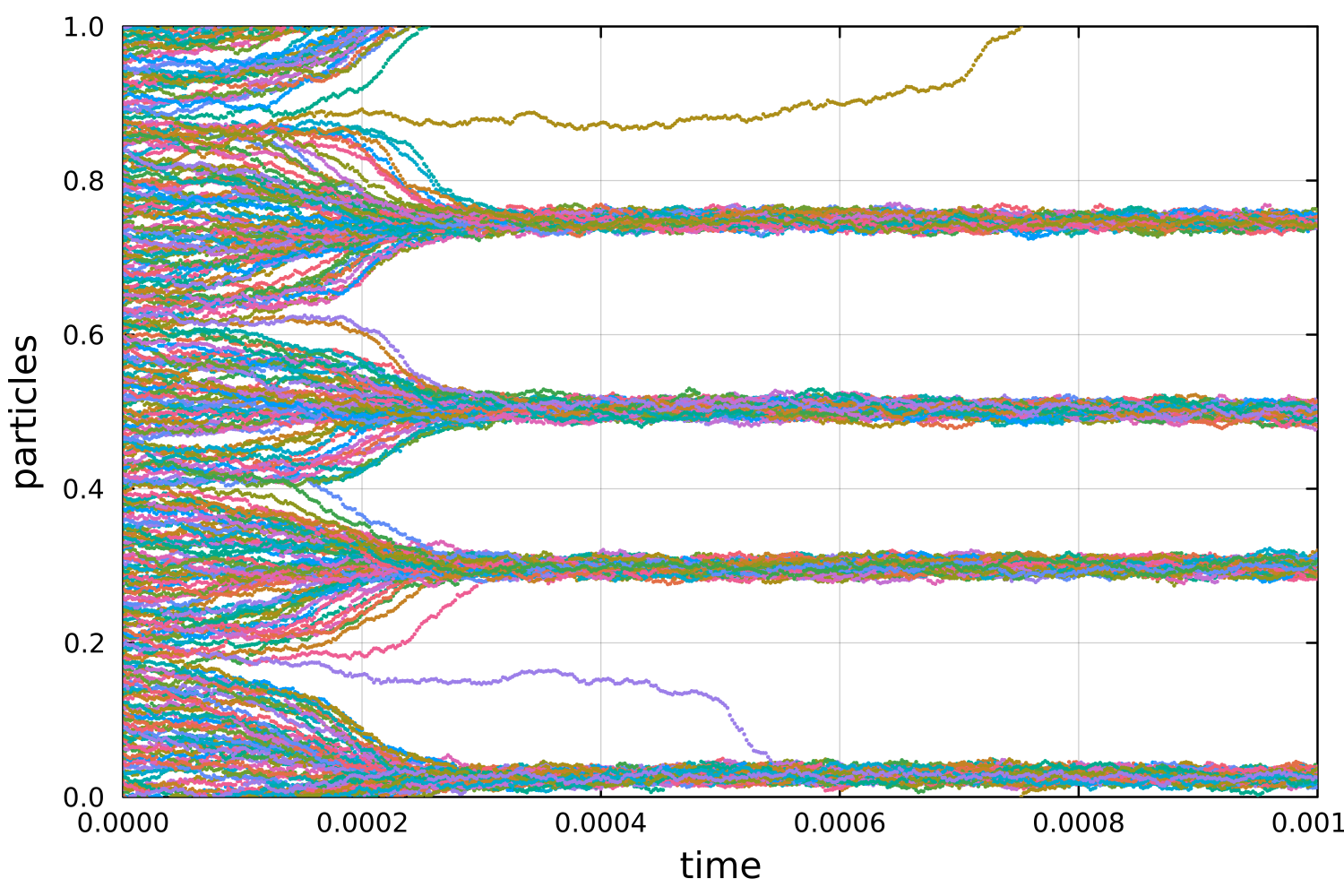}
    \end{subfigure}
    \hfill
    \begin{subfigure}[b]{0.49\textwidth}
        \centering
        \includegraphics[width=\textwidth]{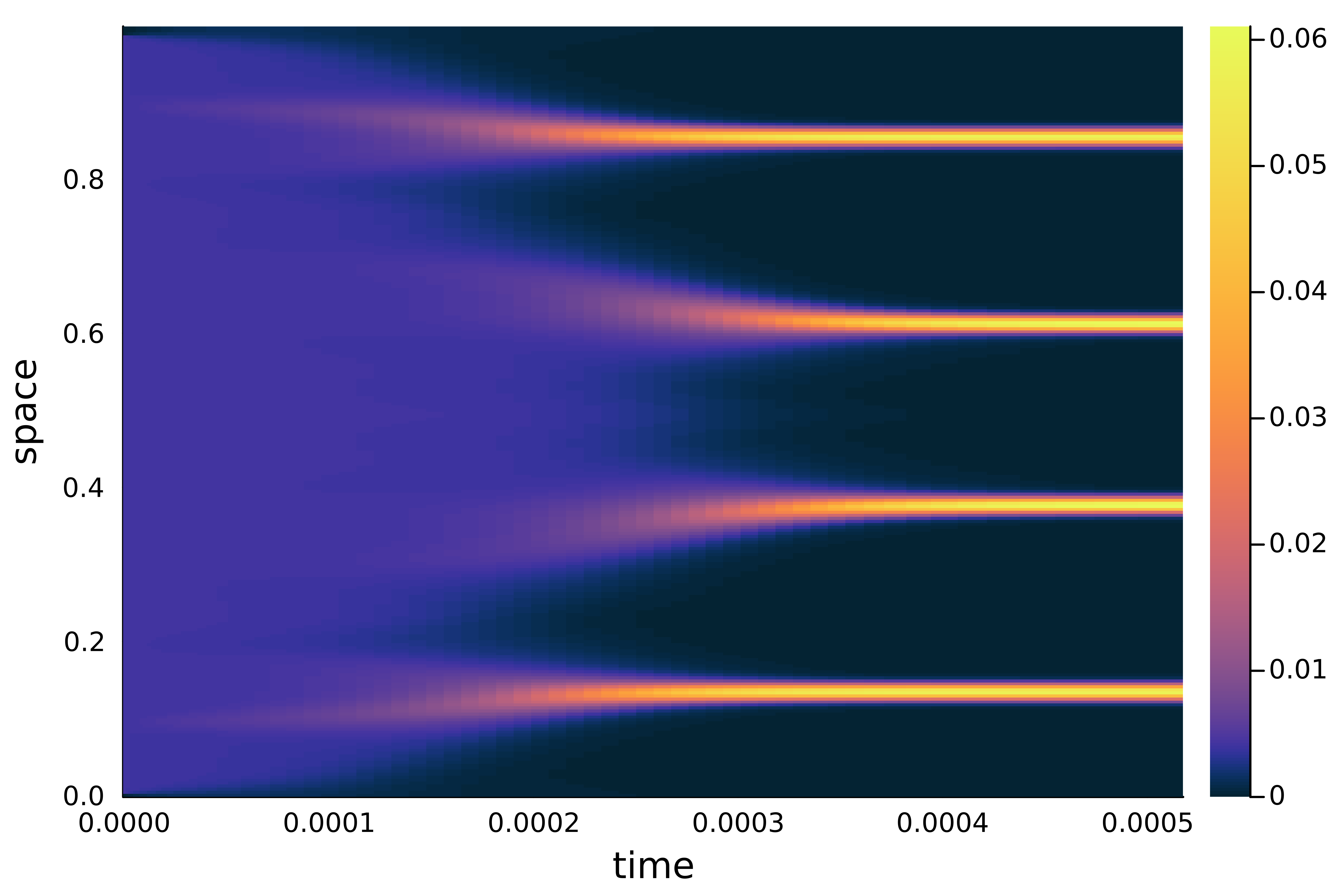}
    \end{subfigure}
    \caption{
    Initial clustering for the
    particle model (left) and for the mean-field PDE (right)  for the potential~\eqref{eq:w-def:Hegselmann--Krause-special} with $\ell = 0.1$ and $\gamma = 10^4$.
    For the particle model, initial positions are evenly spaced on $[0,1)$, that is $X^i_0 = \frac{i}{N}$ for $i=1,\dots,N$. The PDE is started from the state $\frac{100}{98}\chi_{[0.01, 0.99]}$ which is a small perturbation of the uniform state.
    The solution to the PDE is simulated using the Scharfetter--Gummel scheme~\cite{SS22}, please see \href{https://gitlab.uni-ulm.de/clustercoarsening/cluster-coarsening-paper}{https://gitlab.uni-ulm.de/clustercoarsening/cluster-coarsening-paper} for the code repository used to generate the figures in this paper.
    }
    \label{fig:initial-clustering:both}
\end{figure}

\begin{enumerate}[label=(E\arabic*)]
    \item \label{effect:initial-clustering}
          \textbf{Formation of clusters:} The particles, although initially spread out, quickly form multiple clusters driven by the attractive nature of the interaction potential (\cref{fig:initial-clustering:both}, left).
          Typically, this is the shortest timescale in the system. The mean-field PDE~\eqref{eq:McKean-Vlasov-PDE} exhibits a similar phenomenon (\cref{fig:initial-clustering:both}, right).
    \item \label{effect:coalesence}
          \textbf{Coalescence of clusters:} The clusters themselves perform Brownian motions and so can collide and merge into each other (\cref{fig:coalescence}). This phenomenon is absent in the mean-field system since its associated timescale grows linearly with the number of particles.
    \item \label{effect:mass-exchange}
          \textbf{Leakage of mass and dynamical metastability:} Once the clusters are formed, individual particles can leak from one cluster to another driven by excursions of the Brownian motions which drive the individual particles. This phenomenon is present both in the particle system and in the mean-field limit. The associated timescale is of the form $e^{\gamma \ell \Delta m}$, where $\Delta = - \inf w$ and $m$ is the cluster's macroscopic mass, that is, the number of particles in the cluster divided by $N$.
          In the case of the microscopic model, \cref{effect:mass-exchange} competes with~\cref{effect:coalesence} depending on the relative sizes of the timescales of the two phenomena, which are of the order $e^{\gamma \ell \Delta m}$ and $N$, respectively.
          ~\cref{fig:mass-exchange-three-clusters} shows an example for the mass exchange between clusters if the PDE is started initially with three clusters with different masses. Since the timescale for mass leakage $e^{\gamma \ell \Delta m}$ depends exponentially on $m$, the mass exchange among clusters leads to dynamical metastability in the PDE~\cite{Otto2007}: A multi-cluster state can remain stable over a long period of time until it suddenly collapses to a single-cluster state, see~\cref{fig:mass-exchange-three-clusters,fig:2-clusters-dynamical-metastab} for examples.
    \item \label{effect:reversibility}
          \textbf{Microscopic reversibility:} Finally, once a single-cluster state has been formed, it is always possible for it to spontaneously dissolve due to the effect of the noise. This is a large $N$ large deviation event of Dawson--G\"artner-type~\cite{DG1989} and occurs at a timescale of order $e^{N\Delta_{\mathcal F} }$, where $\Delta_{\mathcal F}$ is the free energy gap of the limiting free energy. Since it is driven by noise and  the timescale grows exponentially in the number of particles, this phenomenon is present only in the particle system and not the mean-field system.
\end{enumerate}

\begin{figure}
    \centering
    \includegraphics[width=0.6\textwidth]{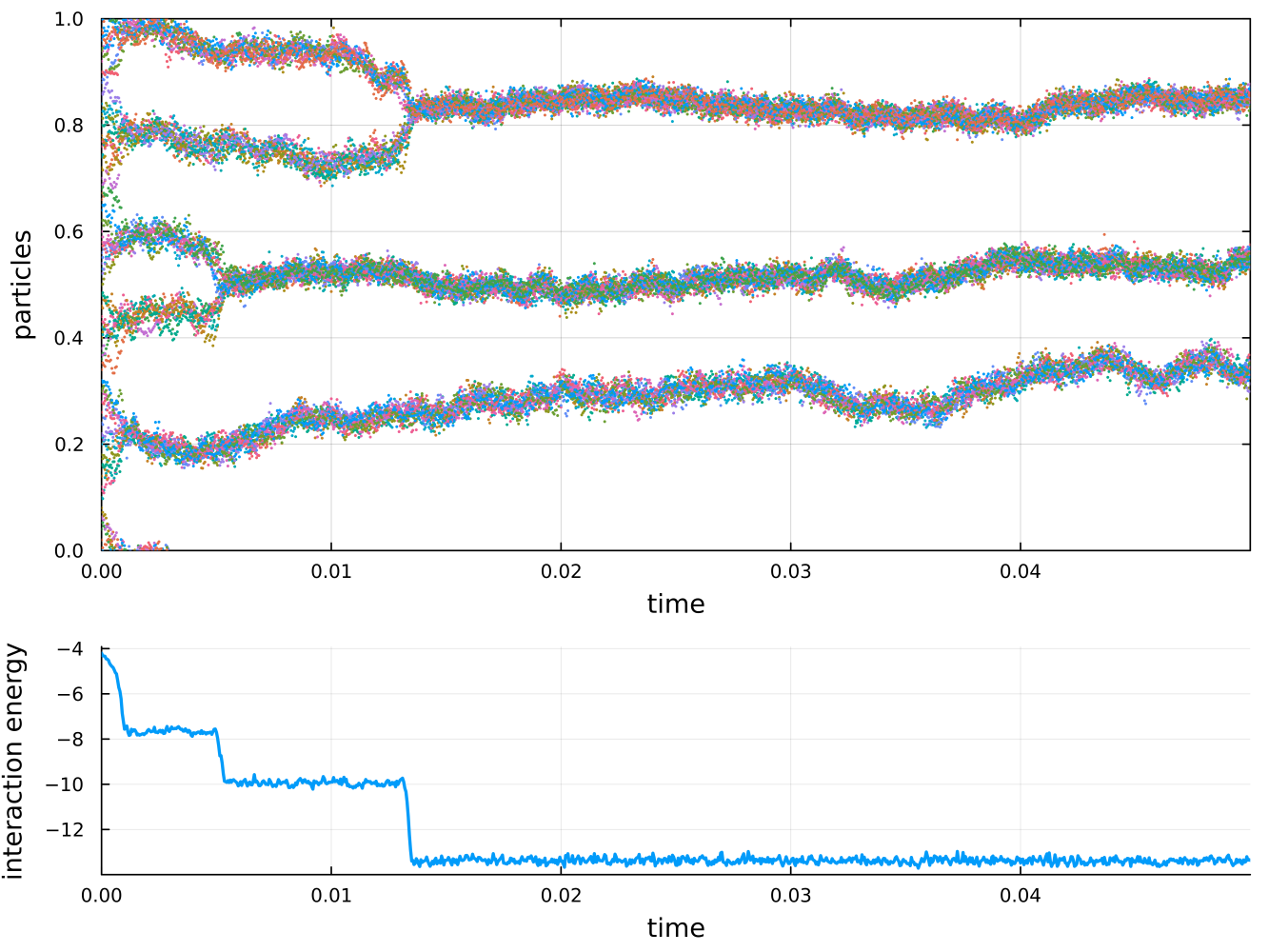}
    \caption{Coalescence of clusters in the particle system~\eqref{eq:microscopic-system} where $N=50$, $\gamma=2000$, $\ell=0.08$ and the particles start at $X^i_0=\frac{i}{N}$.
        ~\eqref{eq:microscopic-system} was simulated using the Euler--Maruyama scheme with time step $\Delta t = 0.00005$.
        The bottom plot shows the interaction energy
        $\frac{1}{2N^2}\sum_{i,j=1}^N W_{\gamma,\ell}(X^i_t - X^j_t)$
        as an order parameter.}
    \label{fig:coalescence}
\end{figure}

\begin{figure}
    \centering
    \begin{subfigure}[b]{0.49\textwidth}
        \centering
        \includegraphics[width=\textwidth]{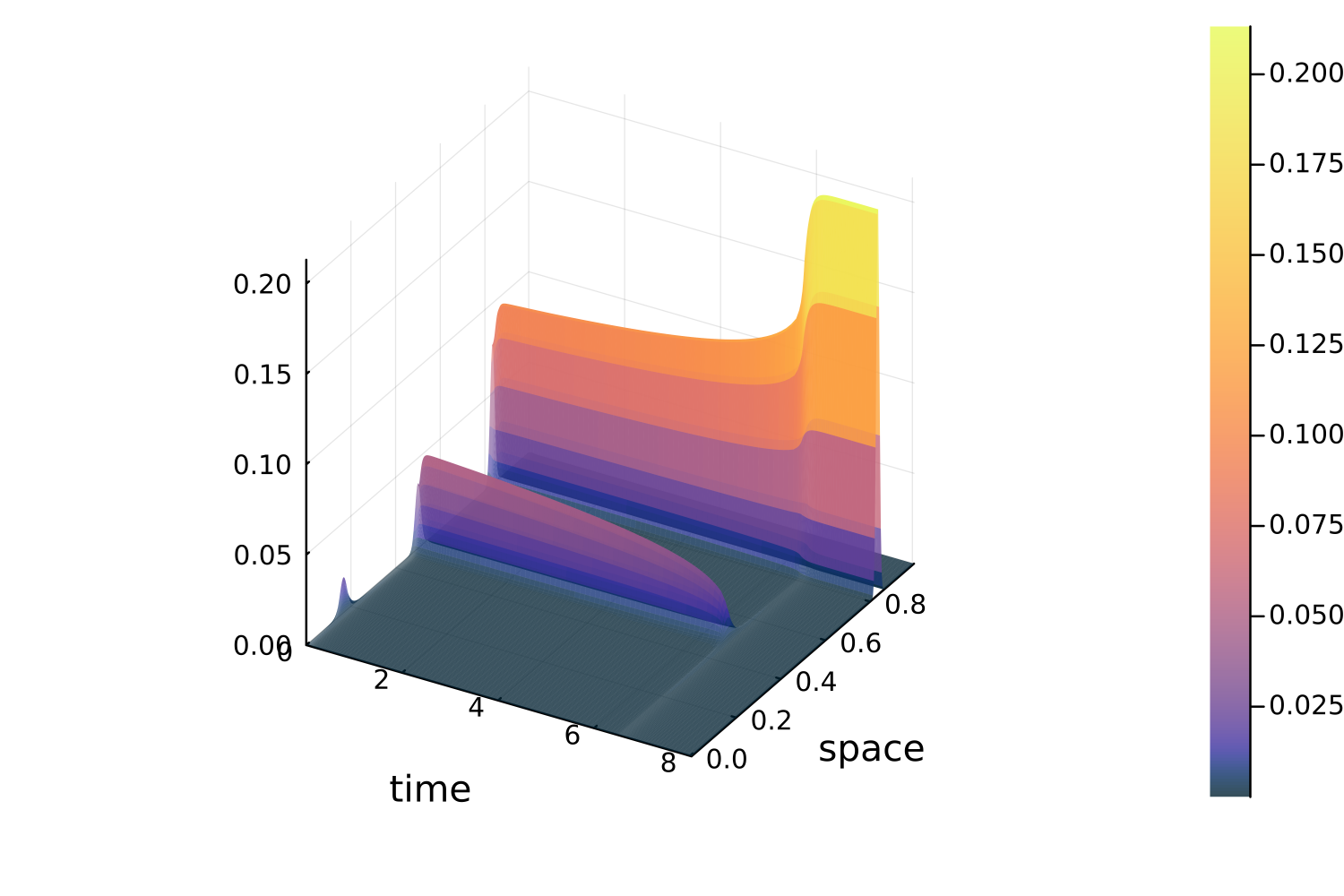}
    \end{subfigure}
    \hfill
    \begin{subfigure}[b]{0.49\textwidth}
        \centering
        \includegraphics[width=\textwidth]{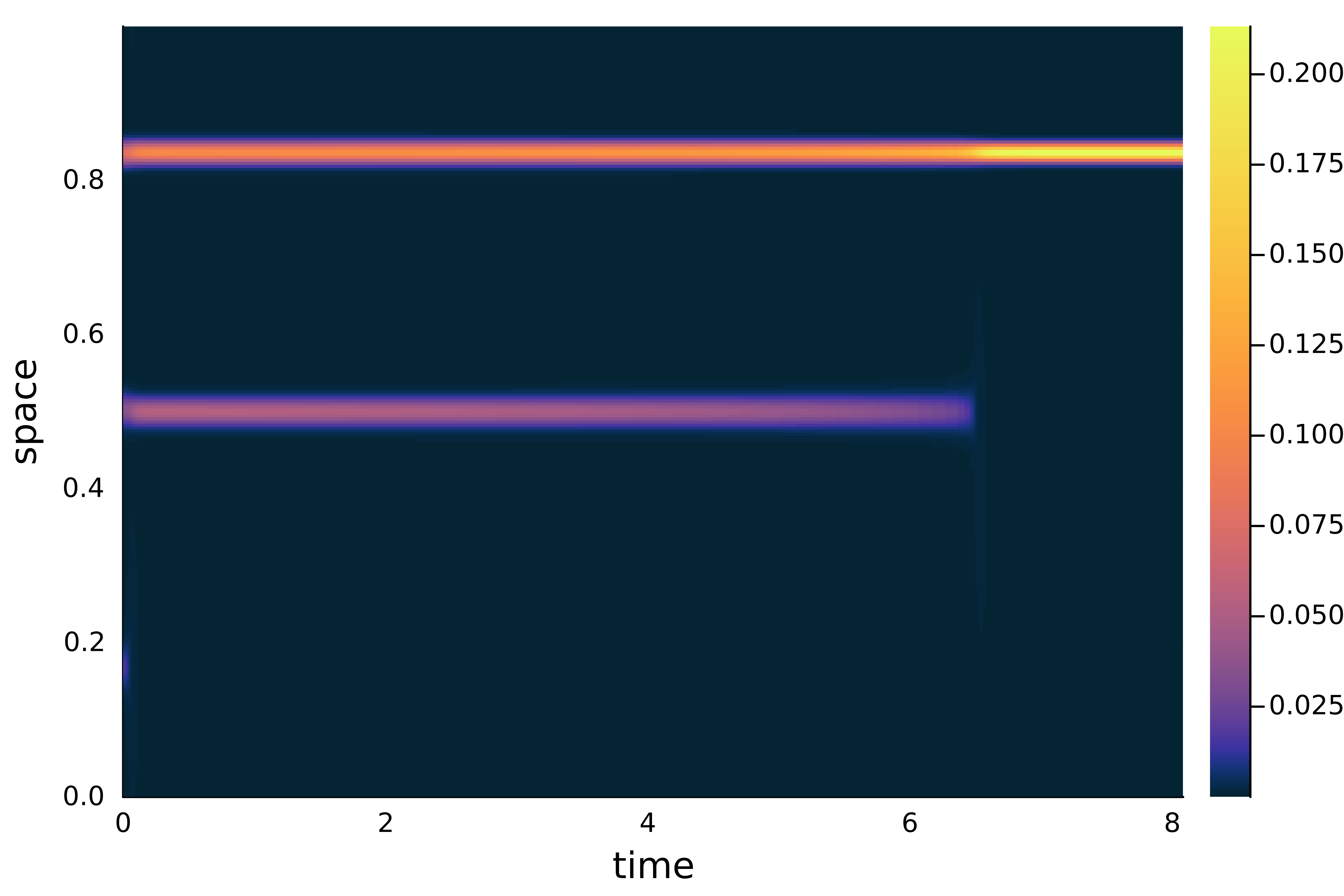}
    \end{subfigure}
    \caption{
        Exchange of mass between three clusters in the mean-field PDE. The potential is given by~\eqref{eq:w-def:Hegselmann--Krause-special} with $\gamma=1000$ and $\ell=0.05$. The initial state is given by a Gaussian mixture (truncated at $0$ and $1$ and normalized) with centres $(0.1, 0.3, 0.7)$ and variances $\sigma_k^2 = \frac{\ell}{\gamma m_k}$ for $k=1,2,3$ and $m_1=0.2, m_2=0.3, m_3=0.5$. This form of the variances will be explained in more detail in~\cref{sec:stationary-states:approximate-gaussian}.}

    \label{fig:mass-exchange-three-clusters}
\end{figure}
\begin{figure}
    \centering
    \includegraphics[width=0.7\textwidth]{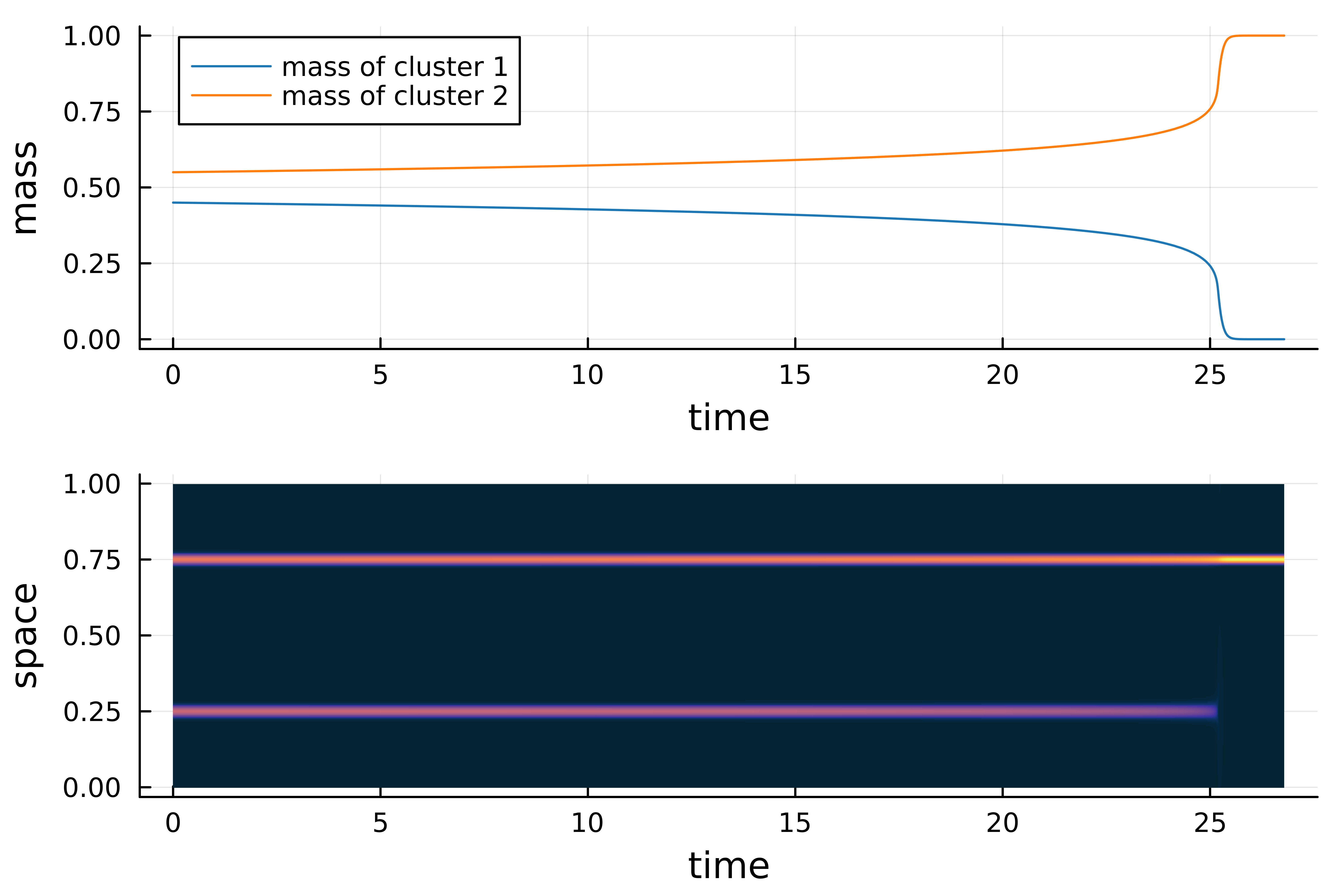}
    \caption{Dynamical metastability in the PDE: The PDE is started from a Gaussian mixture with centres $0.25$ and $0.75$ and initial masses $0.45$, $0.55$, respectively. $w$ is given by~\eqref{eq:w-def:Hegselmann--Krause-special} and $\gamma=1000$ and $\ell=0.05$.}
    \label{fig:2-clusters-dynamical-metastab}
\end{figure}
In what follows,  we will analyse each of these phenomena individually and study the dependence of the associated timescale on the parameters of the system, i.e.\ the strength of the interaction $\gamma$, the length scale of the interaction $\ell$, and the number of particles in the system $N$.
As already mentioned above, depending on the relative sizes of the timescales for~\cref{effect:coalesence} and~\cref{effect:mass-exchange}, one of the two effects can dominate the behaviour of the particle system. On the other hand, since the timescale for effect~\cref{effect:coalesence} roughly scales like $N$, the mean-field system does not show effect~\cref{effect:coalesence}.

As a by-product of our analysis, we derive a coarse-grained system which effectively describes both phenomena~\cref{effect:coalesence} and~\cref{effect:mass-exchange}. Moreover, we propose a suitable regime in which the merging for the clusters in the particle system (i.e. phenomenon~\cref{effect:coalesence}) is captured in terms of a stochastic process closely related to the modified massive Arratia flow~\cite{Konarovskyi2017,KonRenesse_2019}.

\subsection{Our contributions}

A similar model was already studied in~\cite{GarnierEtAl2017} for the particular case of the Hegselmann-Krause interaction potential; it was shown heuristically that if $\gamma$ is sufficiently large, the microscopic system exhibits initial clustering (effect~\cref{effect:initial-clustering}).
Subsequently, the authors explain how the clusters move like coalescing heavy Brownian motions (effect~\cref{effect:coalesence}). See also the numerical experiments and statistical analysis of the clusters in~\cite{WSMSPW_2025}.
Our contributions are as follows.
\begin{itemize}
    \item Generalising the heuristics of~\cite{GarnierEtAl2017} on the initial formation of clusters, we determine the timescale of initial clustering for the mean-field PDE as well as for the particle system under more general initial conditions than~\cite{GarnierEtAl2017}. Moreover, we determine the point of linear stability~$\gamma_\sharp$ explicitly.
    \item We prove for a large family of compactly supported potentials $w$ such as~\eqref{eq:w-def:Hegselmann--Krause-special} that global minimisers of the free energy~\eqref{eq:def:free_energy} are either the uniform state or single-cluster states in the sense that they are symmetrically decreasing with respect to their centre of mass. The latter is true beyond the phase transition, that is, for $\gamma > \gamma_c$, in which case global minimisers are non-trivial, that is, different from the uniform state. 
    The argument is based on a new variant of the strict Riesz rearrangement inequality which holds true for interaction kernels whose monotonicity is only locally strict (in contrast to the globally strictly decreasing kernels considered in~\cite{beckner1993sharp}) and can therefore be applied to certain compactly supported interaction potentials (see also~\cite[Chapter 5]{Shalova2025Thesis} for related results).
    \item By extending the description from~\cite{GarnierEtAl2017} for the coalescence of clusters, we introduce a new model to approximately describe both the coalescence of clusters~\cref{effect:coalesence} and the mass exchange between clusters~\cref{effect:mass-exchange} due to the excursions of single particles.
          In particular, we argue that the clusters in the mean-field PDE are unable to move and that the PDE cannot describe the merging of clusters, except for situations where two clusters are within the range of the interaction potential.
          Therefore, apart from such cases, coarsening in the PDE occurs only through mass exchange.

    \item Based on our coarse-grained model for the mass exchange, we argue and show numerically that the PDE exhibits dynamical metastability: A multi-cluster state can remain stable for a long period of time of order $e^{\gamma \ell \Delta m}$ until one of its clusters suddenly vanishes.
    \item Besides the timescales for~\cref{effect:initial-clustering},~\cref{effect:coalesence} (which is of order $N$) and~\cref{effect:mass-exchange} (which is of order $e^{\gamma \ell \Delta m}$), we also describe the timescale of microscopic reversibility~\cref{effect:reversibility}.
          In the mean-field limit $N\to\infty$, the timescales for coalescence and microscopic reversibility diverge to infinity, leaving the timescale for~\cref{effect:initial-clustering} and~\cref{effect:mass-exchange} as the ones which determine the behaviour of the PDE.
    \item We identify certain scaling regimes in which we conjecture the system~\eqref{eq:microscopic-system} to converge in a suitable limit towards the Modified Massive Arratia flow~\cite{Konarovskyi2017,KonRenesse_2019}.
\end{itemize}

We can summarise our observations by considering the empirical measure $\mu_t^N$ of the interacting particle system which can be formally interpreted as the solution of the following interacting Dean--Kawasaki equation:
\begin{equation}
    \partial_t \mu_t^N= \Delta \mu_t^N +\nabla\cdot(\mu_t^N\nabla W_{\gamma,\ell} * \mu_t^N) + \sqrt{2}N^{-\frac12}\nabla\cdot\bra*{\sqrt[\small]{\cramped{\mu_t^N}}\xi} \, ,
\end{equation}
where $\xi$ is vector-valued space-time white noise. The above SPDE has the structure of a slow gradient flow (corresponding to the deterministic part of the above equation) perturbed by a random process (corresponding to the transport noise), making the SPDE a reversible Markov process  with respect to the lifted Gibbs measure of the $N$-particle system. The competing timescales and the effects~\cref{effect:initial-clustering,effect:coalesence,effect:mass-exchange,effect:reversibility}  can then be interpreted as arising from the different terms in the SPDE.

The initial clustering~\cref{effect:initial-clustering} is driven by the short-time linear instability of the constant state under the influence of the deterministic part of the equation. The coalescence of clusters~\cref{effect:coalesence} is produced by the noise term driving the motion of clusters. Indeed, one can check that $\sqrt{2}N^{-\frac12}\nabla\cdot\bra*{\sqrt[\small]{\cramped{\mu_t^N}}\xi}$ has quadratic variation of order $N^{-1}t$ thus producing a linear timescale in $N$. The mass exchange~\cref{effect:mass-exchange} is produced by the slow motion of the deterministic part (the slow gradient flow) and arises from the competition between the Laplacian and the interaction term. The timescale for microscopic reversibility~\cref{effect:reversibility} can also be explained as the small-noise (large $N$) large deviation timescale of the above SPDE which is exponentially large in $N$.

\subsection{Notation}
For two functions $f, g:(0,\infty)\to(0,\infty)$, we write $f(x)\sim g(x)$ to indicate asymptotic equivalence for $x\to\infty$, that is, $\lim_{x\to\infty} \frac{f(x)}{g(x)}=1$. In addition, we write $f(x) \asymp g(x)$ to denote asymptotic proportionality for $x\to\infty$, i.e., $\limsup_{x\to \infty} \frac{f(x)}{g(x)}<\infty$ and $\liminf_{x\to \infty} \frac{f(x)}{g(x)}>0$. Given $\mu\in\R, \sigma>0$, we write $g(\dummy;\mu,\sigma^2)$ for the density of the one-dimensional Gaussian distribution with mean $\mu$ and variance $\sigma^2$. By abuse of notation, we will often use the same symbol for an absolutely continuous probability measure and its density. For particles $X^1_t, \dots, X^N_t$ at time $t$, we define the empirical measure as $\mu^N_t = \frac{1}{N}\sum_{i=1}^N \delta_{X^i_t}$.  The space of probability measures on a Polish space $E$ is denoted by $\mathcal P(E)$.

\subsection{Outline}
In~\cref{sec:initial-clustering}, we generalize the heuristics by~\cite{GarnierEtAl2017} on the initial clustering. The timescales for initial clustering for the particle system and the PDE can be found in~\cref{sec:initial-clustering-pde} and~\cref{sec:initial-clustering-particle-system}, respectively.

For generic initial conditions and if $\gamma \gg1$, numerical experiments show that the PDE converges to a single-cluster state.
In order to better understand the shape of $\rho_t$ as $t\to\infty$, we analyse the minimisers of the free energy~\eqref{eq:def:free_energy} more closely in~\cref{sec:stationary-states}: First, we prove that for $\ell \ll 1$, the mean-field PDE has a discontinuous phase transition for $\gamma\to\infty$, proving that for sufficiently large $\gamma$, there exist non-trivial global minimisers of the free energy apart from the trivial uniform state $\rhounif\equiv 1$ (see~\cref{sec:discontinuous-phase-transition}). Moreover, we use a strict version of the Riesz rearrangement inequality to show that all global minimisers of the free energy~\eqref{eq:def:free_energy} are either uniform or symmetrically decreasing, i.e., single-cluster states, see~\cref{ssec:Riesz}. 

In~\cref{sec:ReducedModel} we state and derive a reduced model which is able to describe both the coalescence~\cref{effect:coalesence} and the leakage of mass~\cref{effect:mass-exchange}.
We start by recalling the Markovian model from~\cite{GarnierEtAl2017} in~\cref{ssec:model:Garnier} and state our extended model with mass exchange in~\cref{ssec:model:MassExchange}. We show that our new model can explain dynamical metastability of the mean-field limit in~\cref{ssec:model:DynMetastable} and discuss its limitations in~\cref{ssec:model:limitations}. The heuristic derivation of our model is contained in~\cref{ssec:derivation}. As a byproduct, we also obtain a jump process description in~\cref{ssec:model:jumps}.

We conclude the paper by discussing possible future directions  in~\cref{sec:outlook}. Moreover, in~\cref{sec:ModArratia}, we formulate a conjecture on the convergence of~\eqref{eq:microscopic-system} to the Modified Massive Arratia flow~\cite{Konarovskyi2017,KonRenesse_2019} in a suitable scaling limit. Finally, \cref{app:proof-prop-laplace-hitting-time} provides a proof of the computation of the mass leakage times used in our extended model.

\section{Initial clustering}
\label{sec:initial-clustering}

\subsection{Background: Initial clustering  after \texorpdfstring{\cite{GarnierEtAl2017}}{[GPY17]} via linearization}\label{ssec:Initial:GPY}

We review here the heuristics on the initial clustering from~\cite{GarnierEtAl2017}, extending their analysis from the Hegselmann--Krause case~\eqref{eq:w-def:Hegselmann--Krause} to the more general~\cref{w-assumption} as well as examining the timescales of initial clustering for more general initial conditions (see~\cref{sec:initial-clustering-pde} and~\cref{sec:initial-clustering-particle-system}).

Assuming that $\rho_t = \rhounif + \rhopert$, where $\rhounif(x) \equiv 1$ is the uniform equilibrium state and $\rhopert$ is a small perturbation, we linearize the McKean--Vlasov PDE~\eqref{eq:McKean-Vlasov-PDE} around $\rhounif$ to obtain the linear PDE
\begin{align}
    \label{eq:linearized-PDE}
    \partial_t \rhopert = \partial_x^2 \rhopert + \partial_x \bigl(\rhounif W'_{\gamma,\ell} * \rhopert \bigr).
\end{align}
The Fourier coefficients $\rhopert[\hat](t,k) = \int_0^1 \rhopert(t,x) e^{-ikx} \dx x$ for $k \in \mathcal{K} := 2\pi \Z$ satisfy the system of linear ODEs
\begin{align}
    \partial_t\rhopert[\hat](t,k) = \pra[\bigg]{i\rhounif k\int_{[0,1]} W'_{\gamma,\ell}(x) e^{-ikx} \dx x - k^2}\rhopert[\hat](t,k).
\end{align}
Hence, we can read off the growth rates of the Fourier modes as
\begin{align}
    \psi(k) & := \operatorname{Re}\bra*{i\rhounif k\int_{[0,1]} W'_{\gamma,\ell}(x) e^{-ikx} \dx x - k^2} = -k^2 \bra*{ \hat{W}_{\gamma,\ell}(k) + 1},
\end{align}
where
\begin{align}
    \widehat{W}_{\gamma,\ell}(k) & := \int_0^1 W_{\gamma,\ell}(x) \cos(kx) \dx x =
    \gamma\ell \int_{-\frac{1}{2}}^{\frac{1}{2}} w\bra*{\frac{x}{\ell}} \cos(k x) \dx x \\ &=- \gamma \int_{-\frac{1}{2}}^{\frac{1}{2}} w'\bra*{\frac{x}{\ell}} \frac{\sin(kx)}{k} \dx x
\end{align}
are the Fourier modes of the interaction potential $W_{\gamma,\ell}$. By the Riemann--Lebesgue lemma, the growth rates $\psi(k)$ are bounded from above for $k\in \mathcal{K}$ and are therefore maximized at some $k_{\rm max} = \argmax_{k \in \mathcal K, k\ge0} \psi(k)$. The sign of the maximal growth rate $\psi(k_{\rm max})$ determines the stability of~\eqref{eq:linearized-PDE}:
For $\ell>0$ fixed and small; we consider $\gamma>0$ as bifurcation parameter, then the minimal $\gamma>0$ such that one of the growth rates $\psi(k)$ becomes positive is denoted by $\gamma_\sharp$, called the \textbf{point of linear stability}. It is given by
\begin{align}
    \label{eq:gamma-sharp}
    \gamma_\sharp & :
    = - \frac{1}{\min_{k \in \mathcal K \setminus \{0\}} \widehat{W}_{\gamma=1, \ell}(k)} \,,
\end{align}
see also~\cite{CP10}, where $\gamma_\sharp >0$ is true for sufficiently small $\ell>0$ since for those $\ell$ it holds that $\widehat{W}_{1,\ell}(2\pi) = \ell \int _{\supp w} w(x)\cos(2\pi\ell x)\dx x < 0$. Below in~\cref{lemma:gamma-sharp}, we will show that the minimum in~\eqref{eq:gamma-sharp} is always attained at the first Fourier mode $k=\pm 2\pi$.

In case $\gamma < \gamma_\sharp$, we have $\max_{k \in \mathcal K} \psi(k) < 0$, the linearized system~\eqref{eq:linearized-PDE} is stable and small perturbations of the uniform state $\rhounif$ will converge to the uniform state $\rhounif$, see~\cite{CGPS20} for exact results directly on the level of the nonlinear PDE~\eqref{eq:McKean-Vlasov-PDE}.

On the other hand, if $\gamma > \gamma_\sharp$, then
$\max_{k \in \mathcal K} \psi(k) > 0$ and the system~\eqref{eq:linearized-PDE} is unstable. Therefore, provided that the mode $k_{\rm max}$ with maximal growth rate is already present initially, that is,  $\rhopert[\hat](0,k_{\rm max})= \overline{\rhopert[\hat](0,-k_{\rm max})} \neq 0$, the short-time behaviour of~\eqref{eq:linearized-PDE} will be dominated by $e^{\pm i k_{\max}t}$, whose amplitude grows according to
    \begin{align}
        \label{eq:dominant-modes-growth}
        \abs{\rhopert[\hat](t, \pm k_{\rm max})}= e^{\psi(\pm k_{\rm max})t} \abs{\rhopert[\hat](0, \pm k_{\rm max})} \qquad \text{for $t>0$}.
    \end{align}
In particular, \eqref{eq:linearized-PDE} exhibits initial clustering by means of {$e^{\pm i k_{\max}t}$} on a timescale of order
    \begin{align}
        \label{eq:initial-clustering-time}
        \tcluster \asymp \frac{- \log\bra*{\abs{\rhopert[\hat](0, k_{\rm max})}}}{\psi(k_{\rm max})}.
    \end{align}
The estimated number of initial clusters is determined by $k_{\rm max}$ and given by
\begin{align}
    \label{eq:number-of-clusters}
    n_{\rm clusters} = \frac{k_{\rm max}}{2\pi} \in \N,
\end{align}
with the inter-cluster distance being $d_{\rm clusters}=1/n_{\rm clusters}=2\pi/k_{\rm max}$.
We will now describe the occurrence and the timescale for initial clustering for both the PDE~\eqref{eq:McKean-Vlasov-PDE} and the particle system.

\subsection{Initial clustering in the PDE}\label{ssec:InitialPDE}
\label{sec:initial-clustering-pde}
\newcommand{\rhostartmod}[1]{#1} 
Assume that $\gamma >\gamma_\sharp$ and that the PDE is started from a small perturbation $\rhostartmod{\rho}_0\in \mathcal{P}([0,1])\cap L^2([0,1])$ of the uniform state $\rhounif\equiv 1$ such that $\widehat{\rho_0}(0, k_{\rm max})\neq 0$. Then, the short-time behaviour of the PDE is governed by its linearization~\eqref{eq:linearized-PDE} around $\rhounif$ and the previous section shows that the PDE exhibits initial clustering with a number of $n_{\rm clusters}$ clusters. This is also observed numerically, see~(\cref{fig:initial-clustering:both}, right). Because of~\eqref{eq:initial-clustering-time}, the initial clustering timescale is then of the order
\begin{align}
    \label{eq:clustering-timescale:pde}
    \tcluster \asymp  \frac{- \log\bra*{\abs*{
                \widehat{\rhostartmod{\rho}_0}(k_{\rm max}) }}}{\psi(k_{\rm max})}.
\end{align}

\subsection{Initial clustering in the particle system}
\label{sec:initial-clustering-particle-system}
Let $\rhostartmod{\rho}_0\in \mathcal{P}([0,1])\cap L^2([0,1])$ be a probability measure which is close to or equal to the uniform distribution $\rhounif$ and assume that the initial positions of the particles~\eqref{eq:microscopic-system} are sampled i.i.d.\ from $\rhostartmod{\rho}_0$. By the central limit theorem, the fluctuations
\begin{equation}\label{eq:particle-rho-1-fluctuations}
    \rhopert^N (0,x) = \sqrt{N} \pra[\Bigg]{\frac{1}{N} \sum_{j=1}^N \delta_{X^j_0} - \rhostartmod{\rho}_0 }
\end{equation}
converge to a Gaussian-valued measure $\mu$, that is, for each $h\in C_b([0,1])$, the real-valued random variable $\int_{[0,1]} h(x) \mu(\d x)$ is $\mathcal{N}(0, \sigma^2(h))$-distributed, where $\sigma^2(h) =  \int \bigl(h(x)- \int h(y)  \rhostartmod{\rho}_0(\d y)\bigr)^2 \rhostartmod{\rho}_0(\d x)$. Writing the initial empirical measure $\mu^N_0$ in the form
\begin{align}
    \mu^N_0 = \frac{1}{N} \sum_{j=1}^N \delta_{X^j_0}\approx \rhounif + (\rhostartmod{\rho}_0- \rhounif) + \frac{\mu}{\sqrt{N}},
\end{align}
we find from~\eqref{eq:initial-clustering-time} that the initial clustering time is of the order
\begin{align}
    \tcluster \asymp \frac{- \log\bra[\big]{\abs[\big]{
                \widehat{\rhostartmod{\rho}_0}(k_{\rm max}) + \frac{\hat{\mu}(k_{\rm max})}{\sqrt{N}}}}}{\psi(k_{\rm max})},
\end{align}
where $\hat{f}(k)= \int_{[0,1]}f(x)e^{-ikx}\dx x $ denotes the $k$-th Fourier mode of $f$.

For the initial clustering timescale in the particle system, this implies the following: If the particles are sampled i.i.d.\ from the uniform distribution $\rhounif$, then the initial clustering time is of the order
\begin{align}
    \tcluster \asymp \frac{\log N}{\psi(k_{\rm max})}.
\end{align}
This was already described in~\cite{GarnierEtAl2017} and is in line with the fact that $\rhounif$ is a stationary state for the mean-field PDE~\eqref{eq:McKean-Vlasov-PDE}, so that when starting~\eqref{eq:McKean-Vlasov-PDE} at $\rhounif$, no clustering occurs.

On the other hand, if the particles are sampled i.i.d.\ from a distribution $\rhostartmod{\rho}_1$ which is close to $\rhounif$, but $\widehat{\rhostartmod{\rho}_1}(k_{\rm max}) \neq 0$ (in particular, $\rhostartmod{\rho}_1 \neq \rhounif$), then the initial clustering time is of the same order~\eqref{eq:clustering-timescale:pde} as for the limiting PDE~\eqref{eq:McKean-Vlasov-PDE}.

\subsection{Calculating the point of linear stability \texorpdfstring{$\gamma_\sharp$}{}}
    Next, we prove that the minimum in \eqref{eq:gamma-sharp} is always attained at the first Fourier mode. This gives an explicit expression for $\gamma_\sharp$, showing that $\gamma_\sharp \asymp \ell^{-2}$ as $\ell \downarrow 0$.
\begin{lemma}
    \label{lemma:gamma-sharp}
    Under~\cref{w-assumption}, the point of linear stability $\gamma_\sharp$ as defined in~\eqref{eq:gamma-sharp} equals
    \begin{align}
        \label{eq:gamma-sharp-explicit}
        \gamma_\sharp = - \frac{1}{\widehat{W}_{\gamma=1, \ell}(2\pi)}
    \end{align}
    whenever $0 < \ell < \frac{1}{4 s_w}$, where $\supp w = [-s_w, s_w]$. Moreover, we have the asymptotic equivalence
    \begin{align}
        \label{eq:gamma-sharp-asymptotics}
        \gamma_{\sharp} \sim \frac{1}{m_w}\ell^{-2} \qquad \text{as $\ell \downarrow 0$}, \qquad \text{where $m_w = -\int_\R w(x)\dx x>0$}.
    \end{align}
\end{lemma}
\begin{proof}
    \eqref{eq:gamma-sharp-explicit} follows once we have proven that $\widehat{W}_{\gamma=1, \ell}(k)$ is minimized on $(2\pi \Z)\setminus \{0\}$ at $k=\pm 2\pi$. To show this, fix $k\in 2\pi \Z$ such that $k \ge 4\pi$. We decompose the $k$-th Fourier mode of $W_{\gamma=1,\ell}$ as follows,
    \begin{align}
        \widehat{W}_{\gamma=1,\ell}(k) & =\ell\int_{-\frac12}^{\frac12} w\bra*{\frac{x}{\ell}} \cos(kx) \dx x = 2\ell\int_0^\infty w\bra*{\frac{x}{\ell}}\cos(kx)\dx x
        \\ & = 2 \ell
        \int_0^{\frac{\pi}{2k}} w\bra*{\frac{x}{\ell}}\cos(kx)\dx x
        + 2\ell\sum_{m=0}^\infty \int_{\frac{\pi}{2k}(1+4m)}^{\frac{\pi}{2k}(3+4m)} w\bra*{\frac{x}{\ell}}\cos(kx)\dx x                                                \\
                                       & \qquad\qquad
        + 2\ell\sum_{m=0}^\infty \int_{\frac{\pi}{2k}(3+4m)}^{\frac{\pi}{2k}(5+4m)} w\bra*{\frac{x}{\ell}}\cos(kx)\dx x,
    \end{align}
    where we split the integrals at the zeros of $\cos(kx)$. Since $w$ has compact support, the sums are finite.
    Using the substitution $x\mapsto x + \frac{\pi}{k}$, we have for each $m\in \N$ that
    \begin{align}
         & \int_{\frac{\pi}{2k}(1+4m)}^{\frac{\pi}{2k}(3+4m)} w\bra*{\frac{x}{\ell}}\cos(kx)\dx x + \int_{\frac{\pi}{2k}(3+4m)}^{\frac{\pi}{2k}(5+4m)} w\bra*{\frac{x}{\ell}}\cos(kx)\dx x
        \\={}& \int_{\frac{\pi}{2k}(1+4m)}^{\frac{\pi}{2k}(3+4m)} w\bra*{\frac{x}{\ell}}\cos(kx)
        + w\bra*{\frac{x}{\ell}+\frac{\pi}{\ell k}}\cos\bra*{kx + \pi} \dx x
        \\={}& \int_{\frac{\pi}{2k}(1+4m)}^{\frac{\pi}{2k}(3+4m)} \bra*{w\bra*{\frac{x}{\ell}}- w\bra*{\frac{x}{\ell}+\frac{\pi}{\ell k}}}\cos(kx)\dx x.
        \label{eq:integral-difference}
    \end{align}
    Since $w$ is non-decreasing on $[0,\infty)$, \eqref{eq:integral-difference} is non-negative and we obtain
                    \begin{align}
                        \widehat{W}_{\gamma=1,\ell}(k) & \ge \ell \int_{-\frac{\pi}{2k}}^{\frac{\pi}{2k}} w\bra*{\frac{x}{\ell}}\cos(kx) \dx x.
                    \end{align}
                    Moreover, since $\cos(kx) \le \cos(2\pi x)$ for all $x\in [0,\frac{\pi}{2k}]$, we have that
                    \begin{align}
                        \widehat{W}_{\gamma=1,\ell}(k) & \ge \ell \int_{-\frac{\pi}{2k}}^{\frac{\pi}{2k}} w\bra*{\frac{x}{\ell}}\cos(kx) \dx x > \ell \int_{-\frac{\pi}{2k}}^{\frac{\pi}{2k}} w\bra*{\frac{x}{\ell}}\cos(2\pi x) \dx x
                        \\ &\ge\ell \int_{-\frac{1}{4}}^{\frac{1}{4}} w\bra*{\frac{x}{\ell}}\cos(2\pi x) \dx x.
                    \end{align}
                    The inequality in the second step is strict since $w$ is negative on $(-s_w, s_w)$ and $\cos(kx) < \cos(2\pi x)$ for all $x\in (0,\frac{\pi}{2k}]$.
    Assuming that $0 < \ell < \frac{1}{4 s_w}$, we have that
    \begin{align}
         & \ell \int_{-\frac{1}{4}}^{\frac{1}{4}} w\bra*{\frac{x}{\ell}}\cos(2\pi x) \dx x  =\ell\int_{-\frac{1}{2}}^{\frac{1}{2}} w\bra*{\frac{x}{\ell}}\cos(2\pi x) \dx x = \widehat{W}_{\gamma=1,\ell}(2\pi).
    \end{align}
    In particular, $\widehat{W}_{\gamma=1,\ell}(k)$ is minimized on $(2\pi \Z )\setminus \{0\}$ at $k=\pm 2\pi$.

    The asymptotic behaviour \eqref{eq:gamma-sharp-asymptotics} follows from the fact that $\ell^{-1}w(\ell^{-1}\dummy) \to - m_w\delta_0$ in the sense of distributions as $\ell \downarrow 0$. In fact, we have that
    \begin{align}
        \ell^{-2}\widehat{W}_{\gamma=1,\ell}(2\pi) = \int_\R w\bra*{x} \cos(2\pi\ell x) \dx x \to  \int_\R w(x)\dx x \qquad \text{as $\ell \downarrow 0$}.
         & \qedhere
    \end{align}
\end{proof}
\subsection{Explicit number of initial clusters for the Hegselmann--Krause prototype}
For the Hegselmann--Krause prototype \eqref{eq:w-def:Hegselmann--Krause-special}, \cref{lemma:gamma-sharp} yields $\gamma_\sharp \sim \frac{3}{2}\ell^{-2}$ as $\ell \downarrow 0$. To find the number of initial clusters for~\eqref{eq:w-def:Hegselmann--Krause-special}, we calculate $\widehat{W}_{\gamma=1,\ell}(k)=k^{-2} 2 \cos(k\ell) - k^{-3}\ell^{-1}{2 \sin(k\ell)}$, therefore
\begin{align}
    \psi(k)= -k^2 + 2\gamma\frac{ \sin(k\ell)}{k \ell} -  2 \gamma\cos(k\ell).
\end{align}
Viewing $\psi$ as a function on $(0,\infty)$, we have that for large enough $\gamma$, $\psi(k)$ attains its maximum on $(0,\infty)$ in the interval $[0, \frac{2 \pi}{\ell}]$.
The derivative is given by
\begin{align}
    \psi'(k)
     & =
    -2k +
    \frac{2\gamma}{\ell}\bra*{
        \frac{\ell\cos(\ell k)}{k}
        - \frac{\sin(\ell k)}{k^2}
    }
    +  2\gamma \ell\sin(k\ell).
\end{align}
Setting $\psi'(k)=0$ leads to the equation
\begin{align}
    \frac{1}{\gamma} = \frac{1}{\ell} \bra*{ \frac{\ell \cos(\ell k)}{k^2} - \frac{\sin(\ell k)}{k^3}}
    +  \ell \frac{\sin(k\ell)}{k}.
\end{align}
As $\gamma \to\infty$, the limit $y:=\lim_{\gamma \to \infty} {\ell k_{\rm max}(\gamma)}$ solves the transcendental equation
\begin{align}
    0 = y \cos y - \sin y + y^2 \sin y \Leftrightarrow \tan y = \frac{y}{1- y^2} .
\end{align}
The smallest positive solution of this equation is at $y_\ast \approx 2.74$, so that
\begin{align}
    k_{\rm max} \sim \frac{y_\ast}{\ell} \qquad \text{as } \gamma \to \infty.
\end{align}
In particular, the number of initial clusters is given by
\begin{align}
    \label{eq:number-of-clusters:HK}
    n_{\rm clusters} \sim \frac{y_\ast}{2\pi} \frac{1}{\ell} \approx 0.44 \frac{1}{\ell} \qquad \text{as } \gamma \to \infty .
\end{align}

\section{Stationary and metastable states}
\label{sec:stationary-states}

Having analysed the initial formation of clusters in the previous section, we now want to understand the stationary states for the mean-field PDE~\eqref{eq:McKean-Vlasov-PDE}. While the particle system~\eqref{eq:microscopic-system} has a unique stationary state given by
\begin{align}
    \rho^N_{\infty} =
    \frac{e^{- \mathbf{W}_{\gamma,\ell}}}{Z_N} \in \mathcal{P}(\T^N),
    \qquad \text{where $Z_N:= \int_{\T^N} e^{- \mathbf{W}_{\gamma,\ell}(x^1, \dots, x^N)} \dx x^1 \dots \dx x^N$,}
\end{align}
the PDE~\eqref{eq:McKean-Vlasov-PDE} exhibits phase transitions and can have multiple stationary states which are characterized by~\cite[Proposition 2.4]{CGPS20}, see also~\cite{CP10}. In particular, a state $\rho$ is stationary for~\eqref{eq:McKean-Vlasov-PDE} if and only if it is a fixed point of the Kirkwood--Monroe map, that is, it satisfies
\begin{align}
    \label{eq:kirkwood-monroe}
    \rho = \frac{1}{Z(\rho)}e^{-W_{\gamma,\ell}\ast \rho} \in \mathcal{P}(\T), \qquad \text{where } Z(\rho):= \int_{\T} e^{-(W_{\gamma,\ell}\ast\rho)(x)}\dx x.
\end{align}
Note that the uniform state $\rhounif$ is always stationary. As proven in~\cite[Theorem~2.3]{CGPS20}, under reasonable assumptions on the interaction potential, all steady states are smooth and strictly positive.
In addition, any fixed point of~\eqref{eq:kirkwood-monroe} is a critical point of the free energy $\mathcal{F}_{\gamma,\ell}$ from~\eqref{eq:def:free_energy}.

Simulations suggest that for generic initial conditions, the PDE converges to a single-cluster state as $t\to\infty$ unless $\gamma$ is small enough, in which case the PDE converges to the uniform state $\rhounif$. This is in line with the fact that the mean-field PDE has a discontinuous phase transition at some $\gamma_c$, so that for $\gamma > \gamma_c$, there exist global minimisers of the free energy which are non-uniform (see~\cref{sec:discontinuous-phase-transition} below for a proof). Moreover, in~\cref{thm:glob-min-sym-decreasing} we prove that all global minimisers are either the uniform state or single-cluster states in the sense that they are symmetrically decreasing. The argument generalizes the strict version of the Riesz rearrangement inequality from~\cite{beckner1993sharp} to compactly supported kernels, crucially using the analyticity of stationary states which we show in~\cref{lemma:analyticity-stationary-states}.
A heuristic argument as well as numerical results as explained in~\cref{sec:stationary-states:approximate-gaussian} suggest that for large $\gamma$, the global minimiser of the free energy is in fact approximately (a circular version of) a Gaussian with variance $\sigma^2 = \frac{\ell}{\gamma w''(0)}$.

\begin{figure}
    \includegraphics[width=0.8\textwidth]{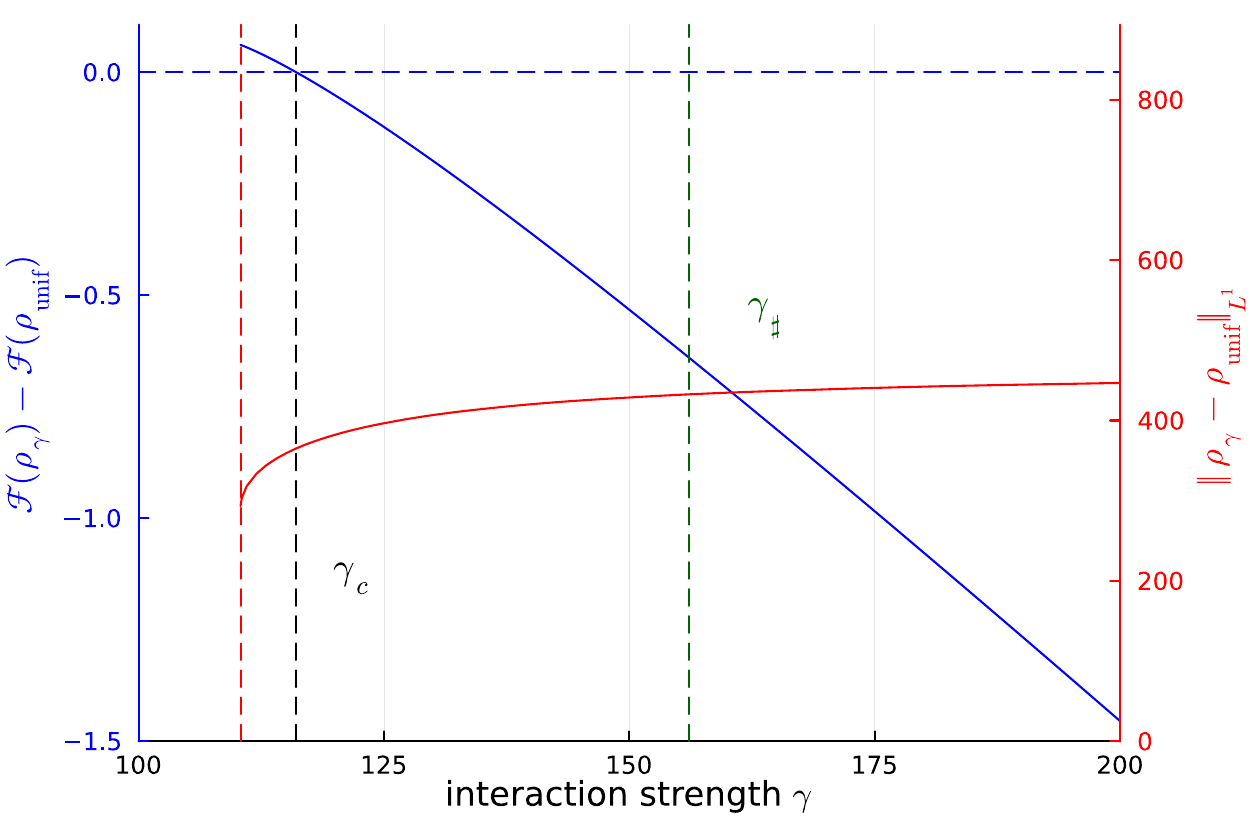}
    \caption{Bifurcation diagram for the Hegselmann--Krause potential~\eqref{eq:w-def:Hegselmann--Krause-special}.
    Holding $\ell=0.1$ fixed, the bifurcation diagram shows the single-cluster state $\rho_\gamma$ as a function of $\gamma$, continuing the single-cluster branch until it ceases to exist. The stationary state $\rho_\gamma$ is computed by using Newton's method to find fixed points of the Kirkwood--Monroe map. 
    The blue curve is the free energy gap $\cF_{\gamma,\ell}(\rho_\gamma) - \cF_{\gamma,\ell}(\rhounif)$, where $\rhounif\equiv  1$ is the uniform stationary state. The red curve shows the $L^1$-norm of $\rho_\gamma - \rhounif$. At $\gamma = \gamma_c$, there is a discontinuous phase transition: The uniform state $\rhounif$ is the unique global minimiser for $\gamma\in[0,\gamma_c)$ and becomes a local minimizer for $\gamma\in (\gamma_c , \gamma_\sharp)$, with $\gamma_\sharp$ the point of linear stability. Likewise, the single-cluster branch $\rho_\gamma$ exists for $\gamma<\gamma_c$ close to $\gamma_c$, where it is a local minimizer and it becomes the global minimiser only for $\gamma > \gamma_c$.}
    \label{fig:bifurcation-diagram}

\end{figure}

\subsection{Existence of a discontinuous phase transition}\label{sec:discontinuous-phase-transition}

We show that in the strong local interaction regime $\ell \ll 1$ and $\gamma \gg 1$, the free energy defined in~\cref{eq:def:free_energy} has a discontinuous phase transition, which is defined as follows~\cite{CP10,CGPS20}.
\begin{defn}[Transition point] \label{def:transitionpoint}
    A parameter value $\gamma_c >0$ is said to be a \emph{transition point} of~$\mathcal{F}_{\gamma,\ell}$ (for fixed $\ell>0$) from the uniform measure $\rhounif(\dx{x}) =  1 \dx{x}$ if it satisfies
    \begin{enumerate}
        \item For $0<\gamma< \gamma_c$, $\rhounif$ is the unique minimiser of $\mathcal{F}_{\gamma,\ell}$\,.
        \item For $\gamma=\gamma_c $, $\rhounif$ is a minimiser of $\mathcal{F}_{\gamma,\ell}$\,.
        \item For $\gamma>\gamma_c$, there exists $\rho_{\gamma} \in \cP(\T)\setminus \set{\rhounif}$, such that $\rho_\gamma$ is a minimiser of $\mathcal{F}_{\gamma,\ell}$\,.
    \end{enumerate}
    Additionally, a transition point  $\gamma_c >0$ is said to be \emph{continuous} for $\mathcal{F}_{\gamma,\ell}$ if
    \begin{enumerate}
        \item For $\gamma=\gamma_c$, $\rhounif$ is the unique minimiser of $\mathcal{F}_{\gamma,\ell}$\,.
        \item Given any family of minimisers $\{\rho_\gamma | \gamma> \gamma_c \}$, it holds that
              \begin{align}
                  \limsup_{\gamma \downarrow \gamma_c} \norm*{\rho_\gamma-\rhounif}_{\mathrm{TV}}=0 \, .
              \end{align}
    \end{enumerate}
    A transition point $\gamma_c$ which is not continuous is said to be \emph{discontinuous}.
\end{defn}
As shown in~\cite{CP10,CGPS20,DGPS23}, for small $\gamma$, the uniform state $\rhounif$ is the unique stationary state.
Also, for small enough $\ell$, we have a discontinuous phase transition~\cite[Def.\ 5.4]{CGPS20} as the following result shows.
\begin{proposition}
        \label{prop:discontinuous}
        Let $w\colon \R \to \R$ have compact support and moreover assume that $\int_\R w(x)\dx x <0$. Then, for all sufficiently small $\ell>0$, the potential $W_{\gamma,\ell}$ defined in~\eqref{eq:def:rescaledW} has a discontinuous transition point $\gamma_c< \gamma_\sharp$. In particular, for $\gamma>\gamma_c$ there exists a global minimiser $\rho_{\gamma,\ell}$ of $\calF_{\gamma,\ell}$ distinct from the uniform state $\rhounif(\dx{x}) = 1$.
    \end{proposition}
    In particular,~\cref{prop:discontinuous} holds true under~\cref{w-assumption} for $\ell$ sufficiently small. The existence of two energy minimisers for $\gamma=\gamma_c$ is illustrated in~\cref{fig:bifurcation-diagram}. Below in~\cref{subsec:gamma-crit}, we will prove that
    \begin{align}
        \limsup_{\ell \downarrow 0} \frac{\gamma_c}{2\Delta^{-1} \ell^{-1} \log\bra*{\ell^{-1}}} \le 1,
    \end{align}
    where $\Delta = -w(0)>0$.
    The proof of~\cref{prop:discontinuous} is based on the following result, which shows that sufficiently localised attractive interacting potentials lead to discontinuous phase transitions~\cite[Cor. 5.14]{CGPS20}.

    \begin{proposition}\label{prop:localized:discontinuous}~\cite[Corollary 5.14]{CGPS20}
        Let $\{W^{(n)} \}_{n \in \N}$ be a sequence of interaction potentials with $\|W^{(n)} \|_{L^1} = C >0$ for all $n \in \N$ such that $W^{(n)} \rightarrow - C \delta_0$ in the sense of distributions as $n \rightarrow \infty$.
        Then for $n$ large enough, the associated free energy
        \begin{align}
            \cF_{\gamma}^{(n)} (\rho) = \int_{\T} \rho(x) \log \rho(x) \dx x + \frac{1}{2} \int_{\T \times \T} W^{(n)}(x-y) \rho(x) \rho(y) \dx x \dx y
        \end{align}
        possesses a discontinuous transition point at some $\gamma_c^{(n)} < \gamma_{\sharp}^{(n)}$, where the point of linear stability $\gamma_{\sharp}^{(n)}$ is defined by replacing $W_{\gamma=1,\ell}$ in the definition ~\eqref{eq:gamma-sharp} by $W^{(n)}$.
    \end{proposition}

    \begin{proof}[Proof of~\cref{prop:discontinuous}]
        Let $(\ell_n)_n\in\N$ be a sequence such that $\ell_n \downarrow 0$. Then, $W^{(n)}:= W_{\gamma/\ell_n^2,\ \ell_n}$ satisfies the requirement of~\cref{prop:localized:discontinuous}.
    \end{proof}

\subsection{Non-trivial global minimisers of the free energy are symmetrically decreasing}\label{ssec:Riesz}

In this section, we prove that global minimisers are either uniform or single-cluster states in the sense that they are translates of their symmetric decreasing rearrangement. In the following, $S^n:=\set{x\in \R^{n+1}\st \abs{x}=1}$ denotes the $n$-dimensional hypersphere. Note also that $S^1\cong \T$. For a measurable function $f:S^n\to [0,\infty)$, we denote by $f^\ast$ the symmetric decreasing rearrangement of $f$.
That is, if $\overrightarrow{n}=(1,0,\dots,0)\in S^n$ and $d:S^n\times S^n\to [0,\infty)$ denotes the geodesic distance on $S^n$, then $f^\ast$ is the (up to sets of measure zero) unique measurable function $f^\ast:S^n\to[0,\infty)$ which depends only on $d(\dummy, \overrightarrow{n})$, is non-increasing in $d(\dummy, \overrightarrow{n})$ and whose level sets have the same measures as the level sets of $f$, that is, $\{x \in S^n: f(x) > t\}$ and $\{x \in S^n: f^\ast(x) > t\}$ have the same mass for all $t>0$.

\begin{theorem}[Global minimisers of the free energy are symmetrically decreasing]
    \label{thm:glob-min-sym-decreasing}
    Assume that $w$ satisfies~\cref{w-assumption} and is additionally $C^2$ on $(-s_w, s_w)$. Fix $\gamma>0$ and $\ell \in (0, 1/(2s_w))$.
    Let $\rho$ be a global minimiser of the free energy~\eqref{eq:def:free_energy}.
    Then there exists a translation $T:\T \to \T$ such that $\rho = \rho^\ast \circ T$.
\end{theorem}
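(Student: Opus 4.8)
The plan is to reduce the statement to an equality case of the (non-strict) spherical Riesz rearrangement inequality, and then to extract rigidity from a strict Riesz inequality valid for kernels that are strictly monotone only near the diagonal, with the analyticity of stationary states as the crucial extra input.

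\textbf{Step 1: reduction to equality in Riesz.} Identify $\T$ with $S^1$. Since $\ell<1/(2s_w)$, the potential $W_{\gamma,\ell}$, restricted to one period, is supported in $(-1/2,1/2)$, hence is a function of the geodesic distance; set $K:=-W_{\gamma,\ell}\ge 0$. Then $K$ is non-increasing in the geodesic distance, and since $w$ is $C^2$ near $0$ with $w''(0)>0$ and $w'(0)=0$ (evenness), $K$ is \emph{strictly} decreasing on an interval $(0,\delta]$ with $-K'>0$ there. The entropy $\rho\mapsto\int\rho\log\rho$ depends only on the distribution of $\rho$, so $\int\rho^\ast\log\rho^\ast=\int\rho\log\rho$, while the Riesz rearrangement inequality on $S^1$ applied to $(\rho,K,\rho)$ (using $K^\ast=K$) gives $\iint K(x-y)\rho(x)\rho(y)\,dx\,dy\le\iint K(x-y)\rho^\ast(x)\rho^\ast(y)\,dx\,dy$. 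Hence $\cF_{\gamma,\ell}(\rho^\ast)\le\cF_{\gamma,\ell}(\rho)$; as $\rho$ is a global minimizer, equality holds throughout, so in particular the Riesz inequality is saturated by $\rho$.

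\textbf{Step 2: regularity.} A global minimizer of $\cF_{\gamma,\ell}$ is a critical point, hence a fixed point of the Kirkwood--Monroe map~\eqref{eq:kirkwood-monroe}; by~\cite[Theorem~2.3]{CGPS20} it is smooth and strictly positive, and by~\cref{lemma:analyticity-stationary-states} it is real-analytic. If $\rho$ is constant then $\rho=\rhounif=\rho^\ast$ and $T=\id$ works, so assume $\rho$ is non-constant. Then $\rho$ has finitely many critical points, every superlevel set $A_t:=\{\rho>t\}$ is a finite union of open arcs, $\lvert\{\rho=t\}\rvert=0$ for all $t$, and consequently $t\mapsto\chi_{A_t}$ is continuous in $L^1(\T)$ on $(\min\rho,\max\rho)$.

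\textbf{Step 3: passing to level sets.} By the layer-cake formula, $\iint K\,\rho\otimes\rho=\int_0^\infty\!\int_0^\infty J(A_s,A_t)\,ds\,dt$ with $J(B,C):=\iint K(x-y)\chi_B(x)\chi_C(y)\,dx\,dy$, and likewise for $\rho^\ast$ with $A_t$ replaced by the centered arc $A_t^\ast$ of the same length. Since $J(A_s,A_t)\le J(A_s^\ast,A_t^\ast)$ for every pair (Riesz for sets), saturation of the double integral forces $J(A_s,A_t)=J(A_s^\ast,A_t^\ast)$ for a.e.\ $(s,t)$, and then for \emph{all} $s,t\in(\min\rho,\max\rho)$ by continuity of $(s,t)\mapsto J(A_s,A_t)-J(A_s^\ast,A_t^\ast)$, which rests on the $L^1$-continuity from Step~2.

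\textbf{Step 4: rigidity from the local strict Riesz inequality.} This is the core and the main obstacle: one cannot quote the classical strict inequality of~\cite{beckner1993sharp}, which requires the kernel to be globally strictly decreasing, because $K$ is flat outside $[-\ell s_w,s_w\ell]$ and genuinely admits non-rearranged optimal set pairs at macroscopic scales. Instead one exploits that $K$ is strictly decreasing on $(0,\delta]$. First, with $s=t$: writing $J(A_t,A_t)=\int K(z)\gamma_{A_t}(z)\,dz$ with $\gamma_{A_t}(z)=\lvert A_t\cap(A_t-z)\rvert$, an integration by parts against $-K'$ together with the elementary bound $\int_0^r(\gamma_{A_t^\ast}-\gamma_{A_t})\ge 0$ shows that saturation forces $\gamma_{A_t}=\gamma_{A_t^\ast}$ on $[0,\delta]$, hence $A_t$ has perimeter $2$, i.e.\ is a single arc (mod null sets). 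Second, with $0<t-s$ small: $A_t\subset A_s$ are arcs with $\lvert A_s\rvert-\lvert A_t\rvert$ small, so their centers satisfy $\lvert c_s-c_t\rvert\le(\lvert A_s\rvert-\lvert A_t\rvert)/2$; writing $J(A_s,A_t)=(K\ast h)(c_s-c_t)$, where $h$ is the even trapezoid obtained by recentering $A_t$ at $c_s$, one checks that $K\ast h$ is symmetric decreasing and strictly decreasing near $0$, so saturation forces $c_s=c_t$. Hence $c_t$ is locally constant, thus constant, in $t$; call it $c$. Every superlevel set of $\rho$ is then an arc centered at $c$, so $\rho$ is symmetric decreasing about $c$, and $\rho=\rho^\ast\circ T$ with $T$ the rotation carrying $c$ to $\overrightarrow{n}$.

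The step I expect to be most delicate is establishing the two rigidity claims in Step~4 with full uniformity when the arcs $A_t$, or the size gaps $\lvert A_s\rvert-\lvert A_t\rvert$, become comparable to or smaller than the interaction range $\ell s_w$; here the a priori bound on the number of critical points of the analytic minimizer, and the resulting control on how the level sets $A_t$ degenerate near $\min\rho$ and $\max\rho$, is precisely what allows one to localize everything to the window $(0,\delta]$ on which $K$ is strictly monotone.
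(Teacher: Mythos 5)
Your proposal is correct in outline and reaches the same conclusion, but the key rigidity step is obtained by a genuinely different mechanism than in the paper. Steps~1 and~2 coincide with the paper's reduction: equality in the Riesz inequality together with analyticity of the minimizer (from \cref{lemma:analyticity-stationary-states}). From there, however, the paper proves and invokes \cref{lemma:riesz-rearrangement-equality-variant}, a \emph{polarization} argument in the spirit of Beckner: one shows $I(\rho^\sigma)=I(\rho)$ for every reflection $\sigma$, deduces from local strictness of $K$ that $(\rho-\rho\circ\sigma)$ cannot change sign on balls of radius $\delta$, and then uses the identity theorem for analytic functions to rule out a global sign change, concluding $\rho=\rho^\sigma$ or $\rho=\rho^\sigma\circ\sigma$ for every $\sigma$, which is known to force $\rho=\rho^\ast\circ T$. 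You instead take the \emph{layer-cake} route à la Lieb: push equality down to superlevel sets $J(A_s,A_t)=J(A_s^\ast,A_t^\ast)$ (using analyticity for the finite-union-of-arcs structure and the $L^1$-continuity of $t\mapsto\chi_{A_t}$), then extract rigidity in two stages — a diagonal autocorrelation/perimeter argument (the identity $\gamma_{A_t}(z)=|A_t|-nz$ near $z=0$ forces $n=1$ once $\gamma_{A_t}=\gamma_{A_t^\ast}$ on $(0,\delta)$, which follows from $\int_0^{1/2}(-K')\Gamma=0$ with $\Gamma(r):=\int_0^r(\gamma_{A_t^\ast}-\gamma_{A_t})\ge 0$ and $-K'>0$ near $0$), and an off-diagonal center-alignment argument ($J(A_s,A_t)=(K\ast h)(c_s-c_t)$ with $h$ a trapezoid of flat-top half-width $(L_s-L_t)/2$, combined with $|c_s-c_t|\le(L_s-L_t)/2$ and strict monotonicity of $K\ast h$ on that range for $|t-s|$ small). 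Your Step~4 is where the real content lives and it is indeed delicate, exactly as you flag: the diagonal step needs $-K'>0$ a.e.\ near $0$ (true here since $w''(0)>0$), and the off-diagonal step needs $(L_s-L_t)/2<\delta$, i.e.\ $|t-s|$ small, to keep the flat top of $h$ inside the strict-monotonicity window of $K$. The upshot of the comparison: the paper's polarization argument is cleaner, works uniformly on $S^n$ (and the paper states \cref{lemma:riesz-rearrangement-equality-variant} in that generality), and sidesteps any combinatorics of level sets; your layer-cake version is more explicit and quantitative but is intrinsically one-dimensional (superlevel sets of an analytic function on $S^1$ are finite unions of arcs, a structure that disappears in higher dimension), so it would not yield the $S^n$ lemma without substantial new work. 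Both approaches are forced to supplement the non-strict Riesz inequality with analyticity because the kernel $K=-W_{\gamma,\ell}$ is strictly monotone only near the diagonal; the difference is in how that local strictness is propagated globally.
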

The above~\cref{thm:glob-min-sym-decreasing} provides a non-trivial result if we know that the global minimiser is non-trivial, that is different from the uniform state $\rhounif(\dx{x})=  1$, which is ensured by the existence of a transition point from~\cref{prop:discontinuous}.
\begin{corollary}[Single-cluster minimiser]
    Assume that $w:\R \to \R$ is as in~\cref{thm:glob-min-sym-decreasing}. Then for sufficiently small $\ell>0$ and $\gamma>\gamma_c$, the global minimiser $\rho_{\gamma,\ell}$ of $\calF_{\gamma,\ell}$ is a nontrivial single-cluster state.
\end{corollary}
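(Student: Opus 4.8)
The plan is to compare a global minimizer $\rho$ with its \emph{polarizations} (two-point rearrangements) and to exploit that the interaction kernel $k:=-W_{\gamma,\ell}\ge 0$ is non-increasing in the geodesic distance on $\T\simeq S^{1}$ and \emph{strictly} decreasing near the origin. For a diameter of $S^{1}$, i.e.\ a reflection $\sigma$ with half-circles $H^{\pm}$, let $\rho^{H}$ be the density equal to $\max(\rho,\rho\circ\sigma)$ on $H^{+}$ and to $\min(\rho,\rho\circ\sigma)$ on $H^{-}$; it is again a probability density with the same distribution function as $\rho$, hence with the same entropy, $\int\rho^{H}\log\rho^{H}=\int\rho\log\rho$. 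Note that $\ell<1/(2s_w)$ guarantees that within one period $k$ is supported in $[-\ell s_w,\ell s_w]$ with $\ell s_w<1/2$, so $k$ is a genuine non-increasing function of the geodesic distance (no wrap-around), and since $w$ is even with $w'(0)=0$ and $w''(0)>0$, there is $\delta>0$ with $k$ strictly decreasing on $[0,\delta]$.

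\emph{Step 1 (minimality $\Rightarrow$ equality in the polarization inequality).} Writing $Q(\mu):=\iint_{\T^{2}}k(x-y)\,\mu(x)\mu(y)\,\dx x\,\dx y$ and $\phi:=\rho-\rho\circ\sigma$, a direct computation using the reflection-invariance of $k$ gives the identity
\begin{equation*}
    Q(\rho^{H})-Q(\rho)=\iint_{H^{+}\times H^{+}}\bigl(\phi(x)\phi(y)\bigr)^{-}\,\bigl(k(d(x,y))-k(d(x,\sigma y))\bigr)\,\dx x\,\dx y .
\end{equation*}
For $x,y\in H^{+}$ one has $d(x,y)\le d(x,\sigma y)$, so the integrand is non-negative and $Q(\rho^{H})\ge Q(\rho)$. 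Since the interaction term of $\cF_{\gamma,\ell}$ equals $-\tfrac12 Q(\rho)$ and the entropy is unchanged, $\cF_{\gamma,\ell}(\rho^{H})\le\cF_{\gamma,\ell}(\rho)$; minimality forces equality, hence $(\phi(x)\phi(y))^{-}\bigl(k(d(x,y))-k(d(x,\sigma y))\bigr)=0$ for a.e.\ $(x,y)\in H^{+}\times H^{+}$.

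\emph{Step 2 (locality $+$ analyticity $\Rightarrow$ symmetry, then conclusion).} For a.e.\ $(x,y)\in H^{+}\times H^{+}$ one has $d(x,\sigma y)>d(x,y)$, and whenever $d(x,y)<\delta$ strict monotonicity of $k$ on $[0,\delta]$ gives $k(d(x,y))>k(s)$ for every $s>d(x,y)$; hence Step 1 forces $\phi(x)\phi(y)\ge 0$ for a.e.\ — and, by continuity of $\rho$, for all — pairs $(x,y)\in H^{+}\times H^{+}$ with $d(x,y)<\delta$. Consequently the open sets $\{\phi>0\}$ and $\{\phi<0\}$ in the connected arc $H^{+}$ lie at distance $\ge\delta$ from one another, so if both were non-empty, $\phi$ would vanish on an entire sub-arc. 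But $\rho$, being a global minimizer, is a stationary state, hence real-analytic by \cref{lemma:analyticity-stationary-states}, so $\phi=\rho-\rho\circ\sigma$ is real-analytic and would then vanish identically on $H^{+}$, a contradiction. Therefore $\phi$ has constant sign on $H^{+}$ (possibly $\phi\equiv 0$), which, unwinding the definition, means $\rho^{H}\in\{\rho,\rho\circ\sigma\}$. Since the diameter $H$ was arbitrary, $\rho^{H}\in\{\rho,\rho\circ\sigma_{H}\}$ for every polarization; by the standard characterization of symmetric-decreasing functions in terms of polarizations, $\rho$ then coincides, up to a rotation $T$ of $\T$, with its symmetric-decreasing rearrangement, i.e.\ $\rho=\rho^{\ast}\circ T$. (If $\rho\equiv 1$ this is trivial; otherwise $T$ carries the center of symmetry of $\rho$ to that of $\rho^{\ast}$.) Combined with \cref{corollary:discontinuous}, this gives the single-cluster statement for $\gamma>\gamma_c$.

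\emph{Main difficulty.} The hard part is Step 2: running the equality case of the rearrangement inequality for a kernel that is only \emph{locally} strictly decreasing. For a globally strictly decreasing kernel one could quote the strict Riesz rearrangement inequality of \cite{beckner1993sharp} directly, but $k=-W_{\gamma,\ell}$ is flat — indeed zero — outside a neighborhood of the origin, so equality only yields a \emph{local} sign condition on $\phi$, and additional rigidity is essential. The analyticity of stationary states supplies exactly this rigidity, excluding a $\phi$ that is locally but not globally constant and thereby promoting the local sign condition to global sign-definiteness on each half-circle. The remaining steps are routine: verifying the polarization identity and the geodesic monotonicity $d(x,y)\le d(x,\sigma y)$ on $S^{1}$ (where $\ell<1/(2s_w)$ keeps $\supp k$ from wrapping around), checking admissibility of $\rho^{H}$ as a competitor in $\cF_{\gamma,\ell}$, and the final polarization-to-symmetrization step — which can alternatively be carried out at the level of the superlevel sets $\{\rho>t\}$, using analyticity to ensure these are finite unions of arcs and then invoking Burchard's equality analysis for the Riesz inequality.
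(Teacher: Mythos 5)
Your proof is correct, but it does more work than the corollary requires: rather than citing \cref{thm:glob-min-sym-decreasing} directly, you re-derive it via the polarization argument. In the paper the corollary is an immediate one-line consequence of combining \cref{corollary:discontinuous} (existence of a nontrivial minimizer for $\gamma>\gamma_c$) with \cref{thm:glob-min-sym-decreasing} (every global minimizer is a translate of its symmetric-decreasing rearrangement), and those two statements are exactly what you invoke at the end. Your Steps 1 and 2 are, in substance, the paper's proof of \cref{lemma:riesz-rearrangement-equality-variant} and \cref{thm:glob-min-sym-decreasing}: the polarization identity you write matches \eqref{eq:polarization-increase} (your $(\phi(x)\phi(y))^-$ equals the paper's $\max\{(\rho(\sigma x)-\rho(x))(\rho(y)-\rho(\sigma y)),0\}$), and your use of the local strict monotonicity of $k$ near $0$ together with \cref{lemma:analyticity-stationary-states} to promote the local sign condition on $\phi$ to global sign-definiteness on $H^+$ is precisely the paper's mechanism for handling the compactly supported (non-globally-strictly-decreasing) kernel. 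So the approach is the same; the only distinction is that you unfolded the theorem inside the corollary's proof rather than applying it as a black box.
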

The proof of~\cref{thm:glob-min-sym-decreasing} relies on the classical Riesz rearrangement inequality~\cite{baernstein1976spherical}.
\begin{theorem}[Riesz rearrangement inequality]
    \label{thm:riesz-rearrangement}
    Consider the hypersphere $S^n$ for some $n\ge1$. Let $K:[0, \pi]\to[0,\infty)$ be a decreasing function. Then, for all measurable non-negative functions $f,g$ on $S^n$ it holds that
    \begin{align}
        \int_{S^n\times S^n} f(x) g(y) K(d(x,y)) \dx x \dx y
        \le \int_{S^n\times S^n} f^\ast(x) g^\ast(y) K(d(x,y)) \dx x \dx y.
    \end{align}
\end{theorem}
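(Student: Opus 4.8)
The plan is to prove this by \emph{two-point symmetrisation} (polarisation), the standard route to spherical rearrangement inequalities. A first reduction is to bounded data: since symmetric decreasing rearrangement commutes with truncation, $(\min(f,M))^\ast=\min(f^\ast,M)$, and $\min(K,M)$ is still non-increasing, monotone convergence in $M$ lets me assume that $f$, $g$ and $K$ are all bounded. Fix the centre $\overrightarrow{n}\in S^n$ of the rearrangement, so $f^\ast$ and $g^\ast$ are non-increasing functions of $d(\dummy,\overrightarrow{n})$; note that $f^\ast$ and $g^\ast$ are \emph{co-centred}, which the polarisation argument will respect automatically.

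The core estimate concerns a single polarisation. For a closed hemisphere $\Sigma\subset S^n$ with $\overrightarrow{n}$ in its interior, with reflection $\sigma=\sigma_\Sigma$ across $\partial\Sigma$, define $h^\Sigma(x)=\max\{h(x),h(\sigma x)\}$ for $x\in\Sigma$ and $h^\Sigma(x)=\min\{h(x),h(\sigma x)\}$ for $x\notin\Sigma$. I would record two facts. First, $h^\Sigma$ is equimeasurable with $h$ (the reassignment is measure-preserving on each two-point orbit $\{x,\sigma x\}$), so $(h^\Sigma)^\ast=h^\ast$; and, because $d(x,\overrightarrow{n})\le d(\sigma x,\overrightarrow{n})$ for $x\in\Sigma$, the symmetric decreasing function is itself polarised, $(h^\ast)^\Sigma=h^\ast$. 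Second, the polarisation inequality
\begin{align}
\int_{S^n\times S^n}\! f(x)g(y)K(d(x,y))\dx x\dx y \;\le\; \int_{S^n\times S^n}\! f^\Sigma(x)g^\Sigma(y)K(d(x,y))\dx x\dx y .
\end{align}
To prove the latter, fold the integral onto $\Sigma\times\Sigma$ using the isometries $(x,y)\mapsto(\sigma x,\sigma y)$ and $(x,y)\mapsto(x,\sigma y)$ together with $d(\sigma x,\sigma y)=d(x,y)$ and $d(\sigma x,y)=d(x,\sigma y)$, which rewrites the left-hand side as
\begin{align}
\int_{\Sigma\times\Sigma}\!\Big(\big[f(x)g(y)+f(\sigma x)g(\sigma y)\big]K(d(x,y))+\big[f(x)g(\sigma y)+f(\sigma x)g(y)\big]K(d(x,\sigma y))\Big)\dx x\dx y ,
\end{align}
and likewise for the right-hand side with $f,g$ replaced by $f^\Sigma,g^\Sigma$. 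On $\Sigma\times\Sigma$ one has $d(x,y)\le d(x,\sigma y)$ — representing points as unit vectors and writing $\sigma y=y-2\langle y,u\rangle u$ for the inward unit normal $u$ of $\partial\Sigma$ gives $\langle x,y\rangle-\langle x,\sigma y\rangle=2\langle x,u\rangle\langle y,u\rangle\ge 0$ — hence $K(d(x,y))\ge K(d(x,\sigma y))$. The pointwise comparison of the two integrands on $\Sigma\times\Sigma$ then follows from the elementary inequality that for $c_1\ge c_2\ge 0$, $a\ge a'\ge 0$, $b\ge b'\ge 0$ the quantity $(ab+a'b')c_1+(ab'+a'b)c_2$ is at least each of its $a\leftrightarrow a'$, $b\leftrightarrow b'$ permutations, with gap $(c_1-c_2)(a-a')(b-b')\ge 0$; integrating gives the polarisation inequality.

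To conclude I would pass from polarisations to $f^\ast,g^\ast$. Choose a sequence $(\Sigma_j)$ of hemispheres through $\overrightarrow{n}$ that is dense and in which each hemisphere recurs infinitely often (interleave a countable dense family), and set $f_k:=f^{\Sigma_1\cdots\Sigma_k}$, $g_k:=g^{\Sigma_1\cdots\Sigma_k}$; iterating the polarisation inequality gives $\int fgK(d(\dummy,\dummy))\le\int f_kg_kK(d(\dummy,\dummy))$ for every $k$. By the classical approximation theorem for iterated polarisations on the sphere, $f_k\to f^\ast$ and $g_k\to g^\ast$ in $L^1(S^n)$; since all functions are uniformly bounded and $K$ is bounded, dominated convergence along an a.e.-convergent subsequence yields $\int f_kg_kK(d(\dummy,\dummy))\to\int f^\ast g^\ast K(d(\dummy,\dummy))$, and undoing the truncations gives the general statement.

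The main obstacle is precisely this last convergence of iterated polarisations to the symmetric decreasing rearrangement: it is the one genuinely non-elementary ingredient and is essentially equivalent to the theorem itself (it is the form in which Baernstein--Taylor originally established the inequality, with later polarisation-based proofs by Brock--Solynin and van Schaftingen). To keep the argument self-contained one observes that polarisation is an $L^1$-contraction on pairs and that $(f^\ast)^{\Sigma_j}=f^\ast$, so $\|f_k-f^\ast\|_{L^1}$ is non-increasing; the work then is to show the limit is $0$ by controlling weak-$*$ subsequential limits of the polarisation orbit $\{f^P\}$ and identifying any limit that is fixed by all hemisphere-polarisations about $\overrightarrow{n}$ with $f^\ast$, using equimeasurability in the limit. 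Alternatively one avoids polarisation altogether and follows Baernstein's original route, reducing the inequality to the subharmonicity on a half-disc of a suitable "star function" assembled from the distribution functions of $f$ and $g$ and the kernel $K$.
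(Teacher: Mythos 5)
Your proposal is correct in outline, but note that the paper does not prove \cref{thm:riesz-rearrangement} at all: it is quoted as the classical Baernstein--Taylor result~\cite{baernstein1976spherical}, and the only polarisation argument the paper carries out is for the strict variant, \cref{lemma:riesz-rearrangement-equality-variant}. Your route is therefore genuinely different from the paper's (a proof instead of a citation), yet it is entirely consonant with the paper's toolkit: your two-point inequality on $\Sigma\times\Sigma$, with gap $(c_1-c_2)(a-a')(b-b')\ge 0$ after folding by the reflection, is exactly the computation appearing in the paper's proof of \cref{lemma:riesz-rearrangement-equality-variant}, and your reduction to bounded data, the identity $(h^\ast)^\Sigma=h^\ast$ for hemispheres containing the pole, and the inner-product verification of $d(x,y)\le d(x,\sigma y)$ on $\Sigma\times\Sigma$ are all sound. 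The one point to be clear-eyed about is the step you yourself flag: the convergence of iterated polarisations $f_k\to f^\ast$, $g_k\to g^\ast$ in $L^1$ is the substantive analytic content and is not proved here; your self-contained sketch (monotonicity of $\norm{f_k-f^\ast}_{L^1}$ plus identification of polarisation-invariant limits with $f^\ast$) needs real work to close, in particular compactness of the polarisation orbit and equimeasurability of subsequential limits, and for treating $f$ and $g$ simultaneously with a single sequence of hemispheres you are implicitly invoking the universal-sequence theorem of van Schaftingen (or the original Baernstein--Taylor star-function argument, or Brock--Solynin). Since that approximation theorem is classical, citing it is legitimate and your proof is complete modulo that reference; what your approach buys over the paper's bare citation is that it makes transparent why the equality analysis in \cref{lemma:riesz-rearrangement-equality-variant} is the natural companion statement, while the paper's choice simply keeps the exposition short.
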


For the proof of~\cref{thm:glob-min-sym-decreasing}, we need to characterize the equality case in~\cref{thm:riesz-rearrangement}. For strictly symmetrically decreasing kernels $K$, this was done in~\cite{beckner1993sharp,burchard1994cases}, see also~\cite{lieb1977existence}. Since the interaction kernels $W_{\gamma,\ell}$ considered here are not strictly symmetrically decreasing, we prove the following variant of~\cite{beckner1993sharp,burchard1994cases}, which characterizes the equality case in~\cref{thm:riesz-rearrangement} for $f=g$ and kernels which can be constant outside a compact set.

\begin{lemma}[Variant of the strict Riesz rearrangement inequality]
    \label{lemma:riesz-rearrangement-equality-variant}
    Assume that $K:[0,\pi] \to [0,\infty)$ is decreasing, i.e., $K(x)\ge K(y)$ if $x\le y$ and that there exists $\delta >0$ such that
    \begin{align}
        \label{eq:assumption:K-strictly-decreasing-around-zero}
        K(x) > K(y)   \quad \text{whenever} \quad 0\le x < y \le \delta.
    \end{align}
    Let $\rho:S^n\to[0,\infty)$ be an analytic function\footnote{Analytic functions $f\colon S^n\to \R$ can be defined e.g.\ through two antipodal stereographic projections, which form a holomorphic atlas.} for which the equality
    \begin{align}
        \int_{S^n\times S^n} \rho(x) \rho(y) K(d(x,y)) \dx x \dx y
        = \int_{S^n\times S^n} \rho^\ast(x) \rho^\ast(y) K(d(x,y)) \dx x \dx y
    \end{align}
    holds.
    Then there exists a rotation $T:S^n\to S^n$ such that $\rho = \rho^\ast \circ T$.
\end{lemma}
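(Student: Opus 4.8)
The plan is to reduce to the known strict Riesz rearrangement inequality by a local argument, using analyticity to propagate a local rigidity statement to a global one. The equality case for \emph{strictly} decreasing kernels (due to \cite{beckner1993sharp,burchard1994cases}) tells us that if $K$ were strictly decreasing on all of $[0,\pi]$, then equality forces $\rho$ to be a rotation of $\rho^\ast$. Here $K$ is only guaranteed to be strictly decreasing on $[0,\delta]$, so the first step is to split the kernel as $K = K_1 + K_2$, where $K_1$ is a decreasing function that is \emph{strictly} decreasing on all of $[0,\pi]$ and agrees with $K$ (up to an additive constant and a positive scalar) on $[0,\delta/2]$, say, while $K_2 := K - K_1$ is still decreasing. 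By the (non-strict) Riesz rearrangement inequality \cref{thm:riesz-rearrangement} applied separately to $K_1$ and $K_2$, the hypothesis that equality holds for $K$ forces equality to hold \emph{separately} for both $K_1$ and $K_2$. In particular, equality holds for the everywhere-strictly-decreasing kernel $K_1$.

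However, one cannot directly invoke \cite{beckner1993sharp} for $K_1$ because $\rho$ need not be strictly positive or bounded away from its level sets in the way those proofs sometimes require; more importantly, we want to extract only a \emph{local} consequence and then globalize. So instead, the second step is to localize: rather than building a globally strictly decreasing $K_1$, we use a kernel of the form $K_\eps(d) := (\delta - d)_+$ (or a smoothed version supported in $[0,\delta]$), which is strictly decreasing precisely on $[0,\delta]$ and constant ($=0$) beyond. Writing $K = c_1 K_\eps + (K - c_1 K_\eps)$ for a small enough $c_1 > 0$ so that the remainder is still decreasing on $[0,\pi]$ (possible because $K$ is strictly decreasing on $[0,\delta]$, with a quantitative lower bound on its decrease on any compact subinterval of $[0,\delta)$ — here is where one must be slightly careful, since "strictly decreasing" alone does not give a uniform modulus; one fixes a subinterval $[0,\delta']$ with $\delta' < \delta$ on which the decrease is controlled, or argues via monotone approximation), the equality hypothesis splits and gives equality in the rearrangement inequality for the \emph{compactly supported, locally strictly decreasing} kernel $K_\eps$. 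Now the classical equality analysis of Burchard \cite{burchard1994cases} applies on the "diagonal strip" $\{d(x,y) \le \delta'\}$: equality for $K_\eps$ with $f = g = \rho$ forces the super-level sets $\{\rho > t\}$ to be, up to null sets, geodesic balls all centered at a \emph{common} point, \emph{provided} these level sets have diameter comparable to $\delta'$ or the level sets are "connected through" the strip. This yields the conclusion on the range of $t$ for which $\{\rho > t\}$ is small enough (diameter $< \delta'$), i.e., near the maximum of $\rho$: there is a point $p \in S^n$ such that for $t$ close to $\max \rho$, $\{\rho > t\}$ is a geodesic ball centered at $p$ (mod null sets).

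The third and crucial step is the globalization via analyticity. From the previous step, $\rho$ is radially symmetric about $p$ and radially decreasing \emph{in a neighborhood} of $p$ — precisely, on the geodesic ball of radius $\delta'$ about $p$ (or on whatever neighborhood the level-set argument controlled). We now claim this forces $\rho = \rho^\ast \circ T$ globally, where $T$ maps $p$ to the north pole. Indeed, consider the function $h := \rho - (\text{the radialization of } \rho \text{ about } p)$; more concretely, fix any rotation $R$ of $S^n$ fixing $p$, and consider $\rho - \rho \circ R$. On the ball $B(p, \delta')$ this vanishes (since $\rho$ is radial there about $p$). Since $\rho$ is analytic and $\rho \circ R$ is analytic, $\rho - \rho \circ R$ is analytic on $S^n$, and it vanishes on an open set, hence vanishes identically. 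Thus $\rho$ is invariant under every rotation fixing $p$, i.e., $\rho$ depends only on $d(\cdot, p)$ globally. Finally, a radial function equal to its own symmetric decreasing rearrangement up to the choice of center: since $\rho$ and $\rho^\ast$ have the same distribution and $\rho$ is radial about $p$ and non-increasing near $p$, and $\rho^\ast$ is radial about the north pole and non-increasing, a standard layer-cake / distribution argument (a radial function on $S^n$ is determined by its distribution up to monotone rearrangement of its radial profile, and the hypothesis that $\rho$ is already decreasing near $p$ together with having the same distribution as the monotone $\rho^\ast$ forces the radial profile of $\rho$ to be monotone everywhere) shows $\rho = \rho^\ast \circ T$.

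The main obstacle I expect is the second step: making precise the decomposition $K = c_1 K_\eps + (\text{decreasing})$ and then correctly invoking the classical equality case of \cite{burchard1994cases} for a \emph{compactly supported} strictly-decreasing-on-its-support kernel to conclude that small super-level sets of $\rho$ are geodesic balls with a \emph{common} center. The classical rigidity results are typically stated for kernels strictly decreasing on all of $[0,\pi]$; one must check that their proof (which proceeds level set by level set, essentially reducing to the rigidity of the Riesz inequality for pairs of characteristic functions of sets) localizes correctly, i.e., that for two sets of diameter $< \delta'$, equality in $\int \mathbf{1}_A \mathbf{1}_B K_\eps(d(x,y)) = \int \mathbf{1}_{A^\ast}\mathbf{1}_{B^\ast} K_\eps$ forces $A, B$ to be concentric balls. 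This is plausible because when $A$ has small diameter, the values $K_\eps(d(x,y))$ for $x,y \in A$ range over the strictly decreasing part of $K_\eps$, so the problem is genuinely equivalent to the strictly-decreasing case restricted to a small cap, where known results apply. The globalization via analyticity (step three) is, by contrast, clean and is exactly the place where the hypothesis that $\rho$ is analytic (rather than merely continuous) is essential — without it, one could have a radial-near-$p$-but-not-globally-radial function.
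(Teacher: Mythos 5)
Your strategy (localize to the scale $\delta$ where $K$ is genuinely strict, then globalize by analyticity) is the same high-level idea as the paper's, and your third step — using the identity theorem for real-analytic functions to turn a local radial symmetry into a global one — is sound and essentially mirrors how the paper uses analyticity. But the route you take to the local rigidity is different from the paper's and contains a genuine gap.

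The paper never decomposes the kernel. Instead it runs Beckner's two-point polarization argument directly with the given $K$: for each reflection $\sigma$ it shows that the polarization gain $I(\rho^\sigma)-I(\rho)$ equals a non-negative integral over $M^+\times M^+$ of $\max\{(\rho(\sigma x)-\rho(x))(\rho(y)-\rho(\sigma y)),0\}\cdot\{K(d(x,y))-K(d(x,\sigma y))\}$. Equality kills this integrand; the strictness of $K$ on $[0,\delta]$ then yields the sign condition $(\rho(x)-\rho(\sigma x))(\rho(y)-\rho(\sigma y))\ge 0$ for $d(x,y)<\delta$, and analyticity upgrades this local sign condition (via the identity theorem) to $\rho=\rho^\sigma$ or $\rho=\rho^\sigma\circ\sigma$ for every $\sigma$. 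This avoids ever having to characterize equality for a compactly supported kernel.

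Your step two has two problems. First, the specific decomposition $K = c_1(\delta-d)_+ + K_2$ with $K_2$ decreasing requires $K$ to decrease at rate at least $c_1$ on $[0,\delta]$; mere strict monotonicity does not give this, and in the intended application ($w$ is $C^2$ with $w''(0)>0$, so $K'(0)=0$) no $c_1>0$ works near $0$. This particular issue is repairable by choosing $K_\eps:=(K-K(\delta))_+$ instead, which is decreasing, supported on $[0,\delta]$, strictly decreasing there, and has $K-K_\eps$ decreasing. Second — and this is the real gap — after the split you need an equality characterization for the compactly supported kernel $K_\eps$, and you appeal to "the classical equality analysis of Burchard." But Burchard/Beckner's rigidity theorems are for kernels that are strictly decreasing on their entire domain (or require admissibility conditions on the sets involved); there is no off-the-shelf equality case for kernels that are constant beyond a compact set, and super-level sets of $\rho$ need not have small diameter, so the reduction to "pairs of small sets" is not free. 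You acknowledge this as "the main obstacle" and propose to re-check that Burchard's proof localizes; carrying that out in the present setting is essentially what the paper's polarization-plus-analyticity argument does, so your step two as written is circular until that re-derivation is supplied.
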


\begin{proof}
    We use a variant of Beckner's polarization argument~\cite{beckner1993sharp}, see also~\cite{burchard2009short}. Let $\overrightarrow{n} = (1,0\dots,0)$. Let $H \subset \R^{n+1}$ be a hyperplane through the origin which does not pass through $\overrightarrow{n}$. The hyperplane $H$ divides $S^n$ into two parts. Let $M^+$ be the connected component of $S^n\setminus H$ that contains $\overrightarrow{n}$ and $M^-$ be the other connected component. Let $\sigma:S^n\to S^n$ be the reflection with respect to $H$ and $M^0=S^n\cap H$. We define the polarization $\rho^\sigma:S^n\to[0,\infty)$ by
    \begin{align}
        \rho^\sigma(x) := \begin{cases}
                              \max\{\rho(x), \rho(\sigma x)\} & \text{if } x\in M^+  \\
                              \rho(x)                         & \text{if } x\in M^0  \\
                              \min\{\rho(x), \rho(\sigma x)\} & \text{if } x\in M^-.
                          \end{cases}
    \end{align}

    Similarly to~\cite[Lemma~2.11]{burchard2009short}, it follows that if
    \begin{align}
        \label{eq:rho-is-rho-sigma-or-rho-circ-sigma}
        \rho = \rho^\sigma \quad \text{or} \quad \rho = \rho^\sigma \circ \sigma \qquad \text{for all reflections $\sigma$},
    \end{align}
    then there exists a rotation $T:S^n\to S^n$ such that $\rho = \rho^\ast \circ T$.

    Now, fix some reflection $\sigma$. Similarly to~\cite[Lemma 2.6]{burchard2009short} (see also~\cite{beckner1993sharp}), we write
    \begin{align}
        I(\rho) & :=\int_{S^n\times S^n} \rho(x) \rho(y) K(d(x,y)) \dx x \dx y
        \\
                & =   \int_{M^+\times M^+} \pra[\Big]{\rho(x)\rho(y)+\rho(\sigma x)\rho(\sigma y)} K(d(x,y)) \dx x \dx y
        \\&\qquad + \int_{M^+\times M^+} \pra[\Big]{\rho(x)\rho(\sigma y)+\rho(\sigma x)\rho(y)} K(d(x,\sigma y)) \dx x \dx y
        \\&=
        \int_{M^+\times M^+} \pra[\Big]{\rho(x)\rho(y)+\rho(\sigma x)\rho(\sigma y)} \set[\Big]{K(d(x,y))- K(d(x, \sigma y))}\dx x\dx y
        \\ & \qquad \begin{multlined}
            + \int_{M^+\times M^+}
            \Bigl[\rho(x)\rho(y)+  \rho(\sigma x)\rho(\sigma y)+ \rho(x)\rho(\sigma y) \\
                \qquad  + \rho(\sigma x)\rho(y)\Bigr]K(d(x, \sigma y)) \dx x \dx y.
        \end{multlined}
    \end{align}
    If we replace $\rho$ by $\rho^\sigma$, the second integral does not change.
    Fixing $x,y\in M^+$, we examine the effect of replacing $\rho$ by $\rho^\sigma$ in the first integral. If $\rho(x)\ge \rho(\sigma x)$ and $\rho(y) \ge \rho(\sigma y)$, or if $\rho(x)\le \rho(\sigma x)$ and $\rho(y) \le \rho(\sigma y)$, then the integrand $\rho(x)\rho(y) + \rho(\sigma x)\rho(\sigma y)$ does not change. Assume that $\rho(x) \ge \rho(\sigma x)$ and $\rho(y) < \rho(\sigma y)$. Then, the integrand increases by
    \begin{multline}
        \rho(x) \rho(\sigma y) + \rho(\sigma x)\rho(y)
        - \rho(x)\rho(y) - \rho(\sigma x)\rho(\sigma y) \\
        = \bra[\big]{
            \rho(\sigma x) - \rho(x)
        }
        \bra[\big]{
            \rho(y) - \rho(\sigma y)
        } \ge 0. \nonumber
    \end{multline}
    The same holds true in case $\rho(x) < \rho(\sigma x)$ and $\rho(y) \ge \rho(\sigma y)$.
    We obtain
    \begin{equation}\label{eq:polarization-increase}
        \begin{multlined}
            I(\rho^\sigma) - I(\rho)
            ={} \int_{M^+\times M^+} \max\set[\Big]{\bra[\big]{
                    \rho(\sigma x) - \rho(x)
                }
                \bra[\big]{
                    \rho(y) - \rho(\sigma y)
                }, 0} \, \cdot
            \\
            \set[\Big]{K(d(x,y))- K(d(x, \sigma y))}
            \dx x \dx y.
        \end{multlined}
    \end{equation}
    From $(\rho^\sigma)^\ast = \rho^\ast$ and Riesz' rearrangement inequality~\cref{thm:riesz-rearrangement}, it follows that $I(\rho) \le I(\rho^\sigma) \le I((\rho^\sigma)^\ast)=I(\rho^\ast)$.
    Since $I(\rho)=I(\rho^*)$ by assumption, and $I(\rho) \leq I(\rho^\sigma) \leq I(\rho^*)$, it follows that $I(\rho^\sigma) = I(\rho)$.
    Hence, the integral on the right-hand side of~\eqref{eq:polarization-increase} is zero.
    Since $d(x, \sigma y) < d(x,y)$ for all $x,y\in M^+$ and $K$ is a decreasing function, the integrand in~\eqref{eq:polarization-increase} is non-negative everywhere, therefore it is zero everywhere. Moreover, if $x,y\in M^+$ with $d(x,y)<\delta$, we have from
    $d(x,y) < d(x,\sigma y)$ and~\eqref{eq:assumption:K-strictly-decreasing-around-zero} that
    \begin{align}
        K(d(x,y)) > K(\min\set{\delta, d(x,\sigma y)}) \ge K(d(x,\sigma y)),
    \end{align}
    so we obtain in particular
    \begin{align}
        \label{eq:riesz-rearrangement-equality-variant-condition}
        \bra[\big]{
            \rho(x) - \rho(\sigma x)
        }
        \bra[\big]{
            \rho(y) - \rho(\sigma y)
        } \ge 0 \quad \text{for all } x,y\in M^+ \quad\text{with } d(x,y) < \delta.
    \end{align}
    The idea is now that \eqref{eq:riesz-rearrangement-equality-variant-condition} implies that the function $\rho - \rho\circ \sigma$ cannot change sign within balls of radius $\delta$. Since $\rho$ is analytic, this will imply that $\rho -\rho\circ \sigma$ cannot change sign globally on $M^+$.

    To make this precise, define $A:= \{x\in M^+: \rho(x) < \rho(\sigma x)\}$ and $B:= \{y\in M^+: \rho(y) > \rho(\sigma y)\}$.
    If $A$ is empty, then $\rho \ge \rho\circ\sigma$ on $M^+$ which implies $\rho = \rho^\sigma$. On the other hand, if $B$ is empty, then $\rho \le \rho\circ\sigma$ on $M^+$ which implies $\rho = \rho^\sigma \circ \sigma$. Below, we will show that it is not possible for $A$ and $B$ to be both non-empty. Together with~\eqref{eq:rho-is-rho-sigma-or-rho-circ-sigma}, this implies~\cref{lemma:riesz-rearrangement-equality-variant}. 

    Assume now that both $A$ and $B$ are non-empty.
    If $x\in A$ and $y\in B$, then $d(x,y) \ge \delta$ by~\eqref{eq:riesz-rearrangement-equality-variant-condition}.
    Let $x_0\in \overline{A}$ and $y_0 \in \overline{B}$ such that $\inf\set*{d(x,y): x\in A, y\in B} = d(x_0,y_0)$. Then, $d(x_0, y_0)\ge \delta$ and the intersection of the open balls $B(x_0, d(x_0,y_0))$ and $B(y_0, d(x_0,y_0))$ is a subset of $\{\rho = \rho\circ\sigma\}$. In particular, $\{\rho = \rho\circ\sigma\}$ contains an open ball. By the identity theorem for real analytic functions, we then have $\rho = \rho\circ\sigma$ on $M^+$, which implies that $A=B=\emptyset$, which is a contradiction.
\end{proof}

\begin{lemma}[Analyticity of stationary states]
    \label{lemma:analyticity-stationary-states}
    Assume that $w$ satisfies~\cref{w-assumption} and is additionally $C^2$ on $(-s_w, s_w)$. Fix $\gamma>0$ and $\ell \in (0, 1/(2s_w))$.
    Then, any stationary state $\rho$ of~\eqref{eq:McKean-Vlasov-PDE}  is analytic.
\end{lemma}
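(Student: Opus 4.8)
The strategy is to upgrade the (already known) smoothness of $\rho$ to a quantitative Cauchy--Kovalevskaya-type bound on all of its derivatives, using the Kirkwood--Monroe fixed-point equation~\eqref{eq:kirkwood-monroe}, and then to extract analyticity from a majorant argument. First, since a stationary state satisfies $\rho = e^{-W_{\gamma,\ell}*\rho}/Z(\rho)$ it is positive, and (by \cite[Theorem~2.3]{CGPS20}, or by a direct bootstrap that uses only that $W_{\gamma,\ell}$ is Lipschitz) $\rho \in C^\infty(\T)$ with $\inf_\T \rho > 0$. Hence $u := \log\rho \in C^\infty(\T)$ is well defined and $u = -W_{\gamma,\ell}*\rho - \log Z(\rho)$. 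Since $W_{\gamma,\ell}$ is Lipschitz, $W'_{\gamma,\ell}\in L^\infty(\T)\subset L^1(\T)$, and one has $(W_{\gamma,\ell}*\rho)^{(n+1)} = W'_{\gamma,\ell}*\rho^{(n)}$ for every $n\ge0$ (move one derivative onto $W_{\gamma,\ell}$ by integration by parts on $\T$, and the remaining $n$ onto the smooth factor $\rho$); consequently $u^{(n+1)} = -\,W'_{\gamma,\ell}*\rho^{(n)}$.

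Now set $M_n := \lVert u^{(n)}\rVert_{L^\infty(\T)}$ and $c := \lVert W'_{\gamma,\ell}\rVert_{L^1(\T)}\,\lVert\rho\rVert_{L^\infty(\T)} < \infty$. Writing $\rho = e^{u}$, Fa\`a di Bruno's formula gives $\rho^{(n)} = e^{u}\,B_n\!\left(u',\dots,u^{(n)}\right)$ with $B_n$ the $n$-th complete Bell polynomial; since $B_n$ has non-negative coefficients and is non-decreasing in each argument, $\lVert\rho^{(n)}\rVert_\infty \le \lVert\rho\rVert_\infty\,B_n(M_1,\dots,M_n)$. Combined with the identity above and Young's inequality $\lVert W'_{\gamma,\ell}*g\rVert_\infty \le \lVert W'_{\gamma,\ell}\rVert_{L^1}\lVert g\rVert_\infty$, this yields the recursion $M_{n+1} \le c\,B_n(M_1,\dots,M_n)$ for $n\ge1$. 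Defining $\mu_1 := M_1$ and $\mu_{n+1} := c\,B_n(\mu_1,\dots,\mu_n)$, monotonicity of $B_n$ gives $M_n\le\mu_n$ for all $n$; and by the exponential formula $\sum_{n\ge0}\frac{B_n(a_1,\dots,a_n)}{n!}x^n = \exp\!\big(\sum_{n\ge1}\frac{a_n}{n!}x^n\big)$, the (a priori formal) generating series $H(x) := \sum_{n\ge1}\frac{\mu_n}{n!}x^n$ satisfies
\[
H'(x) = \mu_1 + c\big(e^{H(x)}-1\big),\qquad H(0)=0 .
\]
The right-hand side is entire in $H$, so this scalar ODE has a solution analytic near $0$, whose Taylor coefficients must coincide with those of $H$; hence $H$ has positive radius of convergence, so $\mu_n \le C\,R^{\,n}\,n!$ for suitable $C,R$, and therefore $\lVert u^{(n)}\rVert_{L^\infty(\T)} = M_n \le C\,R^{\,n}\,n!$ for all $n$. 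This is exactly real-analyticity of $u$ on $\T$ (with radius of analyticity $\ge 1/R$), and hence $\rho = e^{u}$ is real-analytic as a composition of real-analytic maps.

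The point that requires care — and where a naive attack fails — is that one should \emph{not} try to continue $W_{\gamma,\ell}*\rho$ into a complex strip directly: it is only $C^\infty$, not analytic, because $W_{\gamma,\ell}$ has corners at $\pm\ell s_w$. The device that makes the argument close is to differentiate the fixed-point relation, charging a single derivative to $W_{\gamma,\ell}$ (affordable since $W'_{\gamma,\ell}\in L^1$) and all remaining ones to $\rho=e^{u}$, whose derivatives are controlled by Bell polynomials in those of $u$; the recursion then only sees $\lVert W'_{\gamma,\ell}\rVert_{L^1}$, so the limited smoothness of the kernel is harmless. Equivalently, $u' = -W'_{\gamma,\ell}*e^{u}$ is an analytic ODE $u'=\mathcal F(u)$ assembled from the bounded operator $W'_{\gamma,\ell}*(\cdot)$ on $C(\T)$ and the entire map $\exp$, to which an abstract Cauchy--Kovalevskaya theorem in a scale of Banach spaces of functions holomorphic on shrinking strips applies — the explicit majorant computation above being a self-contained substitute. (The extra hypothesis that $w$ is $C^2$ on $(-s_w,s_w)$ is not needed for this lemma itself; it is kept to match the hypotheses of \cref{thm:glob-min-sym-decreasing}.)
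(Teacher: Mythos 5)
Your proof is correct and takes a genuinely different route than the paper's, even though both begin from the Kirkwood--Monroe fixed point and invoke Fa\`a di Bruno. The paper works directly with $\rho$: it expands $\rho = Z^{-1}e^{-W_{\gamma,\ell}*\rho}$ via Fa\`a di Bruno, transfers \emph{two} derivatives from $\rho$ onto $W_{\gamma,\ell}$ (producing boundary terms at $\pm \ell s_w$ and requiring $W''_{\gamma,\ell}$, hence the extra $C^2$ hypothesis on $w$), posits the ansatz $\|\rho^{(n)}\|_\infty \le C^{n+1}n!$, and closes the induction with the Stirling-number identity $\mathbf Y_n(0!,\dots,(n-1)!) = n!$ for the complete Bell polynomial. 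You instead control $u = \log\rho$, transfer only \emph{one} derivative onto $W_{\gamma,\ell}$ (so only $W'_{\gamma,\ell}\in L^1(\T)$, i.e.\ Lipschitz $w$, is used), obtain the majorant recursion $M_{n+1}\le c\,B_n(M_1,\dots,M_n)$, and close it via the exponential generating-function identity for Bell polynomials together with the local analyticity of the solution of the scalar ODE $H' = \mu_1 + c(e^H-1)$, $H(0)=0$. The payoffs of your route are real: the ODE produces the factorial-geometric growth of $M_n$ automatically, so no ansatz needs to be guessed and verified, and — as you correctly observe — the additional hypothesis ``$w\in C^2$ on $(-s_w,s_w)$'' is in fact superfluous for this lemma, since Lipschitz continuity of $w$ (already guaranteed by Assumption~\ref{w-assumption}) is all that enters. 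What the paper's version buys in exchange is a fully self-contained induction that does not appeal to existence theory for analytic ODEs, resting instead on a closed-form combinatorial identity.
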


\begin{proof}
    ~\cref{w-assumption} and $s_w\ell < 1/2$ imply that $W_{\gamma,\ell} \in \mathcal W^{1,2}([0,1])$ where $\mathcal W^{1,2}([0,1])$ is the periodic Sobolev space. Therefore, by~\cite[Theorem~2.3]{CGPS20}, $\rho$ is smooth.
    By Lagrange's form of the remainder in Taylor's theorem, analyticity follows once we have shown  that for some $C>0$ we have
    \begin{align}
        \norm[\big]{\rho^{(n)}}_\infty \le C^{n+1} n! \qquad \text{for all } n\in\N.
    \end{align}
    Since $\rho$ is stationary, it is a fixed point of the Kirkwood--Monroe map~\eqref{eq:kirkwood-monroe}, that is,
    \begin{align}
        \label{eq:analyticity:kirkwood}
        \rho = \frac{1}{Z(\rho)} e^{-W_{\gamma,\ell}\ast \rho} \qquad \text{where } Z(\rho) = \int_{[0,1]} e^{-(W_{\gamma,\ell}\ast \rho)(x)}\dx x.
    \end{align}

    Observe that $\supp W_{\gamma, \ell} = [-\ell s_w, \ell s_w]$. Let $f:\R\to\R$ be a smooth $1$-periodic function.
    By partial integration, we obtain
    \begin{align}
        (W_{\gamma,\ell}\ast f)'(x) & =
        \int_0^1 W_{\gamma,\ell}(y) f'(x-y) \dx y
        = \int_{-\ell s_w}^{\ell s_w} W_{\gamma,\ell}(y) f'(x-y) \dx y
        \\
                                    & =  \pra[\Big]{ - W_{\gamma,\ell}(y) f(x-y) }_{-\ell s_w}^{\ell s_w}
        + \int_{-\ell s_w}^{\ell s_w} W_{\gamma,\ell}'(y) f(x-y) \dx y
        \\ & = \int_{-\ell s_w}^{\ell s_w} W_{\gamma,\ell}(y) f(x-y) \dx y.
    \end{align}
    Similarly, we have
    \begin{align}
        (W_{\gamma,\ell}\ast f)''(x)
         & = \pra[\Big]{ - W_{\gamma,\ell}'(y) f(x-y) }_{-\ell s_w}^{\ell s_w}
        + \int_{-\ell s_w}^{\ell s_w} W_{\gamma,\ell}''(y) f(x-y) \dx y.
    \end{align}
    In particular, there exists a constant $c_1>0$ (depending on $\gamma,\ell$ and $w$) such that we have for all smooth $1$-periodic functions $f:\R\to\R$ that
    \begin{align}
        \label{eq:analyticity:proof:conv-bound}
        \norm*{W_{\gamma,\ell}\ast f''}_\infty \le c_1 \norm{f}_\infty \quad \text{and} \quad
        \norm*{W_{\gamma,\ell}\ast f'}_\infty \le c_1 \norm{f}_\infty.
    \end{align}

    By Faà di Bruno's formula~\cite{johnson2002curious}, the $n$-th derivative of the composition of two $n$ times differentiable functions $f$ and $g$ can be written as
    \begin{align}
        \label{eq:faa-di-bruno}
        (g\circ f)^{(n)}(x)
        = \sum \frac{n!}{m_1! \cdot m_2! \cdots m_n!}
        g^{(k)}(f(x)) \prod_{j=1}^n \bra*{\frac{f^{(j)}(x)}{j!}}^{m_j},
    \end{align}
    where the sum is over all non-negative integers $m_1, \dots, m_n$ such that $1\cdot m_1 + 2\cdot m_2 + \dots + n\cdot m_n = n$.
    Define
    \begin{align}
        C := \max\set*{1, \frac{\norm{\rho}_\infty}{Z(\rho)}, \norm{\rho}_\infty, \norm{\rho'}_\infty, c_1, c_1 \norm{\rho}_\infty}.
    \end{align}
    Assume inductively that for some $n\ge 2$ it holds for all $j=0,\dots, n-1$ that
    \begin{align}
        \norm[\big]{\rho^{(j)}}_\infty \le C^{j+1} j!.
    \end{align}
    By~\eqref{eq:analyticity:kirkwood} and Faà di Bruno's formula~\eqref{eq:faa-di-bruno}, we obtain that
    \begin{align}
        \rho^{(n)} = \sum \frac{n!}{m_1! \cdot m_2! \cdots m_n!}
        \frac{\rho}{Z(\rho)} \prod_{j=1}^n \bra*{\frac{-(W_{\gamma,\ell}\ast \rho)^{(j)}}{j!}}^{m_j}.
    \end{align}
    By the induction hypothesis and the first part of~\eqref{eq:analyticity:proof:conv-bound}, we have for all $j=2, \dots, n$
    \begin{align}
        \norm[\big]{ W_{\gamma, \ell} \ast \rho^{(j)}}_\infty \le c_1 \norm[\big]{\rho^{(j-2)}}_\infty \le c_1 C^{j-1} (j-2)! \le C^{j} (j-1)!.
    \end{align}
    By the second part of~\eqref{eq:analyticity:proof:conv-bound}, this formula holds for $j=1$ as well. We obtain
    \begin{align}
        \norm[\big]{\rho^{(n)}}_\infty
         & \le \sum \frac{n!}{m_1! \cdot m_2! \cdots m_n!} \frac{\norm*{\rho}_\infty}{Z(\rho)}
        \prod_{j=1}^n \frac{\norm*{W_{\gamma,\ell}\ast \rho^{(j)}}_\infty^{m_j}}{j!^{m_j}}     \\
         & \le
        \sum \frac{n!}{m_1! \cdot m_2! \cdots m_n!} \frac{\norm*{\rho}_\infty}{Z(\rho)}
        \prod_{j=1}^n \frac{
        C^{j m_j}(j-1)!^{m_j}}{j!^{m_j}}                                                       \\
         & \le
        \frac{\norm*{\rho}_\infty}{Z(\rho)} C^n \mathbf{Y}_n\bra*{0!, 1!, \dots, (n-1)!},
        \label{eq:rho-nth-derivative-bound}
    \end{align}
    where the \textit{complete Bell polynomial} $\mathbf{Y}_n$ is given by~\cite[Section~3.3]{comtet2012advanced}
    \begin{align}
        \mathbf{Y}_n(x_1, \dots, x_n) := \sum \frac{n!}{m_1!\cdot m_2! \cdots m_n!} \prod_{j=1}^n \frac{x_j^{m_j}}{j!^{m_j}}.
    \end{align}
    By~\cite[Section 3.3, (3i)]{comtet2012advanced}, we obtain
    \begin{equation}
        \label{eq:bell-poly-at-factorials}
        \mathbf{Y}_n(0!, 1!, \dots, (n-1)!) = \sum_{k=1}^n \abs*{s(n,k)},
    \end{equation}
    where $s(n,k)$ are the Stirling numbers of the first kind~\cite[Section~1.14]{comtet2012advanced}. By~\cite[Section~1.14, (14p)]{comtet2012advanced}, the sum~\eqref{eq:bell-poly-at-factorials} equals $n!$. In particular, we obtain from~\eqref{eq:rho-nth-derivative-bound} that
    \begin{align}
        \norm{\rho^{n}}_\infty \le C^{n+1} n!,
    \end{align}
    which proves the claim.
\end{proof}

\begin{proof}[Proof of~\cref{thm:glob-min-sym-decreasing}]
    Define the entropy and the interaction energy by
    \begin{align}
        S(\rho) := \int_{[0,1]} \rho(x) \log(\rho(x)) \dx x \quad \text{and} \quad
        I(\rho) := \iint_{[0,1]^2}  \rho(x) \rho(y) W_{\gamma, \ell}(x-y) \dx x \dx y,
    \end{align}
    respectively. The free energy~\eqref{eq:def:free_energy} is then given by the sum $\mathcal F(\rho) = S(\rho) + I(\rho)$. The entropy depends only on the masses of the levels sets of $\rho$, so $S(\rho) = S(\rho^\ast)$. Also note that we are minimizing the free energy and that $W_{\gamma,\ell}(x)$ is increasing as $x$ increases from $0$ to $\frac{1}{2}$, so we can apply~\cref{thm:riesz-rearrangement} to $-I(\rho)$ to obtain $I(\rho) \ge I(\rho^\ast)$. Since $\rho$ is a global minimiser of $\mathcal{F}$, this implies that
        $I(\rho)=I(\rho^\ast)$. By~\cref{lemma:analyticity-stationary-states}, $\rho$ is analytic.
    From~\cref{w-assumption} we have that $w''(0)>0$ which implies~\eqref{eq:assumption:K-strictly-decreasing-around-zero}, so by applying~\cref{lemma:riesz-rearrangement-equality-variant}  to $-I(\rho)$, we obtain the claim.
\end{proof}

\begin{remark}[Generalization to higher dimensions]
    We expect a variant of~\cref{thm:glob-min-sym-decreasing} to hold true on $S^d$ and on $\mathbb{T}^d$ 
    for $d>1$ as well. The key part to prove this is to generalize the analyticity result~\cref{lemma:analyticity-stationary-states} to multiple dimensions, something which we believe can be done for example using the multivariate generalization of Faà di Bruno's formula from~\cite{Hardy2006}.
\end{remark}

\subsection{Approximate Gaussian steady states}
\label{sec:stationary-states:approximate-gaussian}

Numerically, we observe that the single-cluster states in the PDE are Gaussians with variance $\sigma^2 = \frac{\ell}{\gamma w''(0)}$. This is in line with the following heuristic argument. Under~\cref{w-assumption},
we can use Laplace's method around $x= 0$ to find
\begin{align}
    \label{eq:kirkwood-monroe:Laplace}
    e^{-(W_{\gamma,\ell}\ast \rho)(x)} \sim
    e^{-\gamma \ell w(0)- \gamma w''(0) (x^2\ast \rho)(x)/\ell} \qquad \text{as }\gamma \to \infty.
\end{align}
Therefore, a Gaussian with variance $\sigma^2=\frac{\ell}{\gamma w''(0)}$ satisfies the Kirkwood--Monroe fixed point equation~\eqref{eq:kirkwood-monroe} approximately for large $\gamma$.
Therefore, we expect the single-cluster state to have variance $\sigma^2$, which is confirmed by our numerical simulations (see~\cref{fig:single-cluster-state}).
That single-cluster states are approximately Gaussian can also be seen through the particle system by approximating the dynamics of the particles within a cluster by an Ornstein--Uhlenbeck process as in~\cite[Setion~5.3]{GarnierEtAl2017}.

\begin{figure}
    \includegraphics[width=0.8\textwidth]{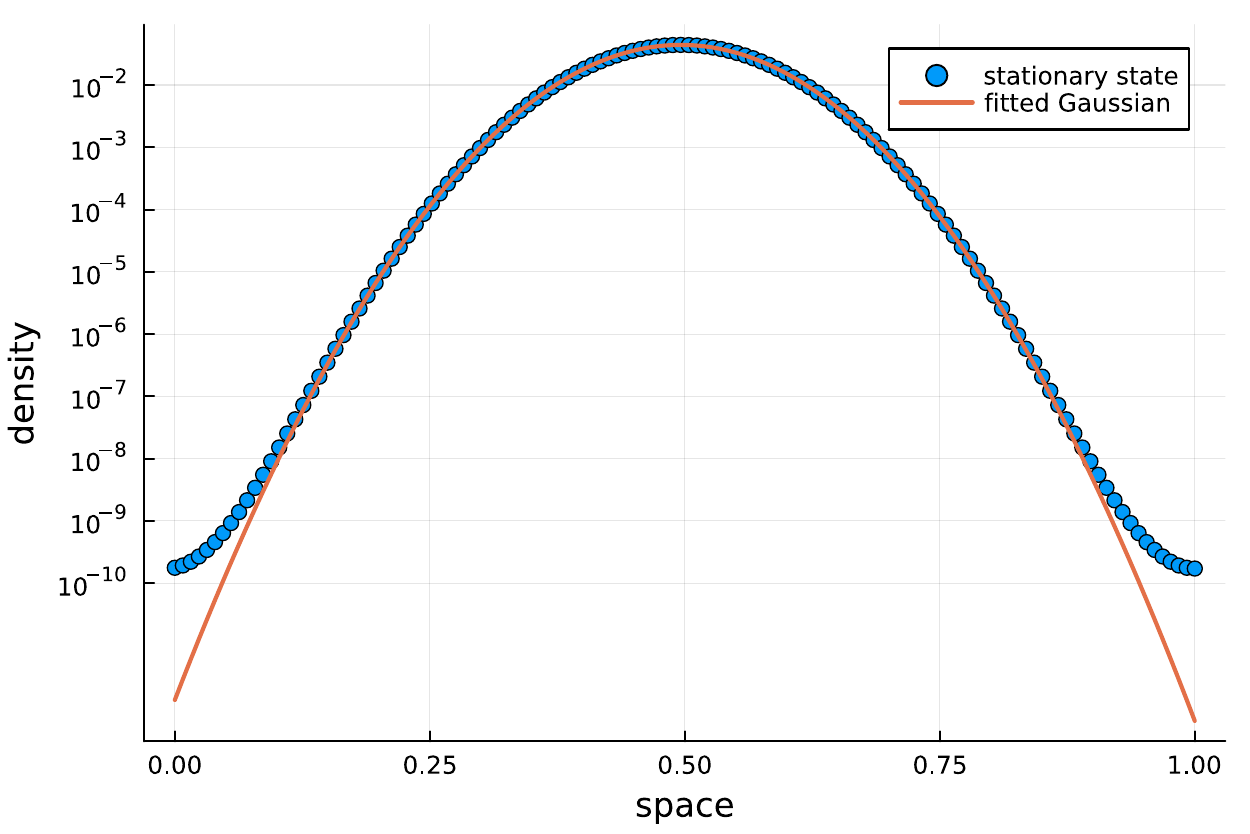}
    \caption{Approximate Gaussian single-cluster states are stationary for $\gamma \gg 1$ (logarithmic $y$-axis). \textbf{Blue:} A numerical steady state of~\eqref{eq:McKean-Vlasov-PDE}, computed as a fixed point of the Kirkwood--Monroe map~\eqref{eq:kirkwood-monroe}  using Newton's method. The potential is given as in~\eqref{eq:w-def:Hegselmann--Krause-special} with parameters~$\gamma=100$ and~$\ell= 0.5$. \textbf{Red:} A Gaussian least-squares fit to the numerical steady state. Its variance is~$\sigma^2 = 0.0051$ which approximately equals $\frac{\ell}{\gamma}=0.005$.}
    \label{fig:single-cluster-state}
\end{figure}

\subsection{Upper bound on the critical interaction strength \texorpdfstring{$\gamma_c$}{}}
    \label{subsec:gamma-crit}
    Next, we give a rigorous upper bound on the critical interaction strength $\gamma_c$ by comparing the free energy of the uniform state with the free energy of a rectangular function.

    \begin{proposition}
        \label{prop:gamma-crit-bound}
        Assume that $w$ satisfies~\cref{w-assumption} and that $\Lambda>0$ is such that
        \begin{align}
            \label{eq:w-quadratic-upper-bound}
            w(x) \le  -\Delta + \frac{\Lambda}{2} x^2 \quad \text{for all } x \in \R,
        \end{align}
        where $\Delta := -w(0) > 0$.
        (For example, for~\eqref{eq:w-def:Hegselmann--Krause-special} we can take $\Lambda=1$.)
        Then, for all $\ell>0$ such that $\ell<\min\set*{\frac{1}{2},\frac{\Delta}{m_w}, \frac{1}{2s_w}, \sqrt{\frac{\Lambda}{12 \Delta}}}$ where $m_w = -\int_\R w(x) \d x>0$ , we have that
        \begin{align}
            \label{eq:gamma-sharp-heuristics:less}
            \gamma_c \le \gamma_{c+}(\ell) := - \frac{W_{-1}\bra*{-\lambda(\ell)\ell\exp(c(w))}}{\lambda(\ell)},
        \end{align}
        where $\lambda(\ell):= \ell \Delta - \ell^2 m_w>0$ and $c(w):= \log\bra*{\frac{12}{\Lambda}}-1$ and $W_{-1}$ is the lower branch of the Lambert W function.
        In particular,
        \begin{align}
            \label{eq:gamma-sharp-heuristics:limsup}
            \limsup_{\ell \downarrow 0} \frac{\gamma_c}{2\Delta^{-1}\ell^{-1}\log\bra*{\ell^{-1}}} \le 1.
        \end{align}
    \end{proposition}

    \begin{proof}
        For simplicity, we work with the domain $[-\tfrac12,\tfrac12]$ rather than $[0,1]$. For some fixed $a\in(0,\ell/2)$, we consider the rectangular function $\varrho_a:[-\tfrac12,\tfrac12]\to[0,\infty)$ defined by
        \begin{align}
            \varrho_a(x) := \begin{cases}
                                \frac{1}{2a} & \text{if } - a \le x \le  a, \\
                                0            & \text{otherwise.}
                            \end{cases}
        \end{align}
        We want to compare the free energy of $\varrho_a$ with the free energy of the uniform state $\rhounif$.
        The entropy of $\varrho_a$ is given by
        \begin{align}
            \int_{-\frac12}^{\frac12} \varrho_a(x) \log(\varrho_a(x)) \d x = - \log(2a).
        \end{align}
        Under the assumption that $a < \frac{\ell}{2}$, we can bound the interaction energy from above using \eqref{eq:w-quadratic-upper-bound} as follows:
        \begin{align}
                & \frac{1}{2}\iint_{[-\frac12,\frac12]^2} W_{\gamma, \ell}(x-y) \varrho_a(x) \varrho_a(y) \dx x \dx y = \frac{1}{8a^2}
            \iint_{[-a,a]^2} W_{\gamma, \ell}(x-y) \dx x \dx y                                                                                                                                  \\
            ={} & \frac{\gamma \ell}{8a^2} \iint_{[-a,a]^2} w\bra*{\frac{x-y}{\ell}} \dx x \dx y \le \frac{\gamma \ell}{8a^2} \iint_{[-a,a]^2} \bra*{-\Delta + \frac{\Lambda}{2} \frac{(x-y)^2}{\ell^2}} \dx x \dx y \\
            ={} &
            - \frac{\gamma \ell \Delta }{2} +
            \frac{\Lambda}{6} \frac{\gamma}{\ell}a^2.
        \end{align}
        In particular, we have that
        \begin{align}
            \calF_{\gamma,\ell}(\varrho_a) & \le - \log(2a) +
            \frac{\Lambda}{6} \frac{\gamma}{\ell}a^2 - \frac{\gamma \ell \Delta }{2}.
        \end{align}
        On the other hand, the free energy~\eqref{eq:def:free_energy} of the uniform state $\rhounif$ is given by
        \begin{align}
            \calF_{\gamma,\ell}(\rhounif) & = \frac{1}{2}  \int_{[0,1]^2} W_{\gamma,\ell}(x-y) \d x \d y = \frac{1}{2}  \int_{[0,1]} W_{\gamma,\ell}(z) \d z \\&= \frac{\gamma \ell}{2}\int_\R w\bra*{\frac{x}{\ell}} \d x = \frac{\gamma \ell^2}{2} \int_\R w(x) \d x = - \frac{\gamma \ell^2}{2} m_w,
        \end{align}
        where $m_w = -\int_\R w(x) \d x>0$ and we used that $W_{\gamma,\ell}$ is $1-$periodic and that $\ell s_w<\frac{1}{2}$.
        In particular, we have the upper bound on the free energy gap
        \begin{align}
            \label{eq:free-energy-gap-upper-bound-heuristics}
            \calF_{\gamma,\ell}(\varrho_a) - \calF_{\gamma,\ell} (\rhounif) \le - \log(2a) + \frac{\Lambda}{6} \frac{\gamma}{\ell}a^2 - \frac{\gamma \ell \Delta }{2} + \frac{\gamma \ell^2}{2} m_w.
        \end{align}
        The minimum of the right-hand side in  $a\in(0, \ell/2)$ is given by $a_0:=\sqrt{3\ell/(\Lambda \gamma)}$ and we obtain
        \begin{align}
            \calF_{\gamma,\ell}(\varrho_{a_0}) - \calF_{\gamma,\ell} (\rhounif) \le
            \frac{1}{2}\phi(\gamma, \ell) := \frac{1}{2}\bra*{-\gamma \lambda(\ell) + \log \gamma - \log \ell -c(w)},
        \end{align}
        where $\lambda(\ell):= \ell \Delta - \ell^2 m_w>0$ and $c(w):= \log\bra*{\frac{12}{\Lambda}} -1$.

        Note that $\phi(\gamma, \ell)  < 0$ is equivalent to
        \begin{align}
            \gamma\exp\bra*{-\gamma \lambda(\ell)} < \ell \exp(c(w)) \Leftrightarrow (-\gamma\lambda(\ell))\cdot\exp\bra*{-\gamma \lambda(\ell)}
        > -\lambda(\ell)\ell \exp(c(w)).
        \end{align}
        The function $x\mapsto x \exp(x)$ is strictly decreasing on $(-\infty, -1)$ with inverse function $W_{-1}:[-e^{-1}, 0)\to (-\infty, -1]$, the lower real branch of the Lambert $W$-function. Therefore, we have $\phi(\gamma,\ell) < 0$ for all $\gamma > \gamma_{c+}(\ell)$, where
        \begin{align}
            \gamma_{c+}(\ell) := - \frac{W_{-1}\bra*{-\lambda(\ell)\ell \exp(c(w))}}{\lambda(\ell)} > 0.
        \end{align}
        Note that $\lambda(\ell)\ell \exp(c(w)) = (\ell \Delta - \ell^2 m_w) \ell \frac{12}{\Delta} e^{-1} \le \ell^2 \frac{12 \Delta}{\Lambda} e^{-1} \le e^{-1}$ since we assume $\ell \le \sqrt{\Lambda/(12 \Delta)}$, so the argument of $W_{-1}$ is in $[-e^{-1},0)$ as required.
        By means of \eqref{eq:free-energy-gap-upper-bound-heuristics} and \cref{def:transitionpoint}, this proves~\eqref{eq:gamma-sharp-heuristics:less}.

        We will now prove that
        \begin{align}
            \label{eq:gamma-sharp-plus-asymp-equivalent}
            \lim_{\ell \downarrow 0} \frac{\gamma_{c+}(\ell)}{2 \Delta^{-1}\ell^{-1}\log\bra*{\ell^{-1}}} = 1,
        \end{align}
        which will then imply~\eqref{eq:gamma-sharp-heuristics:limsup}. To show \eqref{eq:gamma-sharp-plus-asymp-equivalent}, fix $\ell$ sufficiently small and let $x:= \lambda(\ell) \gamma_{c+}(\ell)$ and $y:= \lambda(\ell) \ell \exp(c(w))$. Then, $x\exp(-x) = y$ and so in particular
        \begin{align}
            \label{eq:lambertW-x-y}
            x = \log(y^{-1}) + \log(x).
        \end{align}
        Since $x\ge 1$ due to $-x = W_{-1}(-y)$, \eqref{eq:lambertW-x-y} implies that $x \ge \log(y^{-1})$.
        On the other hand, we have that
        \begin{align}
            3 \log(y^{-1}) \exp(-3\log(y^{-1})) = 3 \log(y^{-1}) y^3 < 3y^2 < y,
        \end{align}
        if $y$ is small enough. Since $t\mapsto t \exp(-t)$ is strictly decreasing on $(1,\infty)$, this implies that $x < 3 \log(y^{-1})$. Plugging this into~\eqref{eq:lambertW-x-y}, we obtain that $x \le \log(y^{-1}) + \log(\log(y^{-1})) + \log 3$. In particular, we have that
        \begin{align}
            -\frac{W_{-1}(-y)}{\log\bra*{y^{-1}}} = \frac{x}{\log\bra*{y^{-1}}} \in \bra*{1, 1+ \frac{\log(\log(y^{-1})) + \log 3}{\log(y^{-1})}}
        \end{align}
        which proves~\eqref{eq:gamma-sharp-plus-asymp-equivalent}.
    \end{proof}
    \begin{remark}
        Instead of approximating the single-cluster stationary states of the PDE with rectangular functions, one could alternatively use analogues of the Gaussian on the circle, such as the von Mises distribution or the wrapped Gaussian. While these choices would provide a more accurate representation of the cluster shape, the resulting upper bound for $\gamma_{c+}$ would remain of the same order in $\ell$, and the calculations would become more technical.
    \end{remark}

\subsection{Multi-cluster states}
\label{sec:multi-cluster-states}
When, after initial clustering, the $N$ particles in~\eqref{eq:microscopic-system} are divided into $K\in\N$ clusters of $N^{(1)}, \dots, N^{(K)}$ particles, respectively, we can assign to each cluster a mass $m_i=\frac{N^{(i)}}{N}$.
If the clusters are sufficiently localised, the particles within each cluster behave approximately like the system~\eqref{eq:microscopic-system} with effective interaction strengths $\gamma_i= \frac{N^{(i)}}{N} \gamma= m^{(i)} \gamma$.
Therefore, the particles within cluster $i$ are approximately normally distributed with variances $\sigma_i^2=\frac{\ell}{\gamma w''(0) m^{(i)}}$.
Moreover, the empirical density for a multi-cluster state with masses $m^{(1)}, \dots, m^{(K)}$ and cluster centres $X^{(1)}, \dots, X^{(K)}$ is approximately a Gaussian mixture
\begin{align}
    \label{eq:multi-cluster-state}
    \mu^N \approx \sum_{k=1}^{K} m^{(k)}g(\dummy; X^{(k)}, \sigma_k^2) =
    \sum_{k=1}^{K} m^{(k)} \frac{1}{\sqrt{2\pi \sigma^2_k}} \exp\bra*{-\frac{1}{2\sigma^2_k}{\bra*{\dummy-X^{(k)}}^2}},
\end{align}
where $g(x;\mu, \sigma^2)$ is the density of a normal distribution with mean $\mu$ and variance~$\sigma^2$.

In~\cref{sec:ReducedModel}, we will provide evidence that multi-cluster states are metastable states for the PDE~\eqref{eq:McKean-Vlasov-PDE} and that the stability timescale for a cluster of mass $m$ within a multi-cluster state is of order $e^{\gamma \ell \Delta m}$.

\subsection{The critical minimal mass for a cluster inside a multi-cluster state}
\label{sec:critical-mass}

Consider again a multi-cluster state in the particle system~\eqref{eq:microscopic-system}. 
In order for a cluster of mass $m\in(0,1)$ of this multi-cluster state to be stable at least for some time, the width $\sigma_{\rm cluster}= (\ell/(\gamma w''(0) m))^{\frac{1}{2}}$ needs to be small compared to the interaction range $2s_w\ell$ of the potential since otherwise the cluster would dissolve quickly. 
We define the critical minimal mass $\mcrit \in (0,1)$ as the smallest mass for which a cluster within a multi-cluster state does not dissolve immediately. Certainly, if the cluster width $\sigma_{\rm cluster}$ is much smaller than the interaction range $2s_w\ell$ of $W_{\gamma,\ell}$, we expect the cluster to be stable for some time. This corresponds to the condition
\begin{align}
    \sigma_{\rm cluster} \ll 2s_w\ell \quad \Rightarrow \quad m \gg \frac{1}{\gamma \ell},
\end{align}
giving the heuristic lower bound
\begin{align}
    \label{eq:m-crit}
    \mcrit \gtrsim \frac{1}{\gamma \ell}
\end{align}
on the critical minimal mass. The existence of a critical minimal mass $\mcrit$ can also be seen through the following scaling argument for the free energy. For all $m\in(0,1)$ and $\rho\in\calP(\T)$, we have that
\begin{align}
    \mathcal F_{\gamma,\ell}(m\rho) & = m \int \rho(x) (\log \rho(x) + \log(m))\dx x + m^2 \frac{\gamma \ell}{2} \iint w\bra*{\frac{x-y}{\ell}} \rho(x)\rho(y) \dx x\dx y \\
                                    & =m \mathcal F_{m\gamma, \ell}(\rho) + m \log m.
\end{align}
Therefore, whenever $\rho_{m,\gamma} \in \mathcal P(\T)$ is such that
\begin{align}
    \mathcal{F}_{\gamma,\ell}(m\rho_{m,\gamma}) =
    \inf_{\rho \in \mathcal{P}(\T)} \mathcal{F}_{\gamma, \ell}(m\rho),
\end{align}
we have that
\begin{align}
    \mathcal{F}_{m\gamma,\ell}(\rho_{m,\gamma}) = \inf_{\rho \in \mathcal{P}(\T)} \mathcal{F}_{m\gamma,\ell}(\rho).
\end{align}
Therefore, $\rho_{m,\gamma}$ can only be a stable cluster state provided that $m\gamma > \gamma_c$, where $\gamma_c$ is defined in~\cref{sec:discontinuous-phase-transition}.
This suggests that the critical mass $\mcrit \in (0,1)$ is of the form $\mcrit = \frac{\gamma_c}{\gamma}$, so that~\eqref{eq:m-crit} gives the heuristic lower bound $\gamma_c \gtrsim \ell^{-1}$. Recall that \cref{prop:gamma-crit-bound} gives a rigorous upper bound on $\gamma_c$ which is of the order $\ell^{-1}\log(\ell^{-1})$.

\section[The reduced model]{The reduced model: Coalescing heavy Brownian motions with mass exchange}\label{sec:ReducedModel}

In this section, we will first recall the Markovian dynamics of the clusters from~\cite{GarnierEtAl2017}. Then we will present a model for the mass exchange of clusters.

\subsection{The Markovian dynamics of the clusters in the particle model after~\texorpdfstring{\cite{GarnierEtAl2017}}{GPY17}}\label{ssec:model:Garnier}
We now describe the coalescence of the clusters for the particle model~\eqref{eq:microscopic-system}, following closely~\cite{GarnierEtAl2017}.
Assuming that $w$ satisfies~\cref{w-assumption}, we consider again the dynamics~\eqref{eq:microscopic-system} in the regime $\ell \ll 1$ and $\gamma \gg 1$.
Let us assume that initial clustering has already occurred (see~\cref{sec:initial-clustering}) so that the particles are organised in  $K(\tau_0)$ clusters at some time $\tau_0$. Let $I^j \subset \{1,\dots,N\}$ be the set of indices of the particles belonging to the $j$-th cluster.
Each cluster $j$ has then a mass of  $m^{(j)}=\abs{I^j}/N$ and a corresponding width $\sigma_j$ defined by $\sigma^2_j = \frac{\ell}{\gamma w''(0) m^{(j)}}$.
Since $w$ is even, it follows from~\eqref{eq:microscopic-system} that the cluster centres
\begin{align}
    X^{(j)}_t:= \frac{1}{\abs{I^j}}\sum_
    {i\in I^j} X^i_t 
\end{align}
follow independent Brownian motions
\begin{equation}
    \label{eq:clust_cent_mass}
    X^{(j)}_t = X^{(j)}_{\tau_0} + \frac{\sqrt{2}}{\sqrt{N m^{(j)}}} W^{(j)}_t,
\end{equation}
where $W^{(j)}_t := \frac{1}{\sqrt{\abs{I^j}}} \sum_{i\in I^j} W^i_t$ is a standard Brownian motion. The cluster centres follow~\eqref{eq:clust_cent_mass} until two clusters meet at time
\begin{align}
    \tau_1 = \inf \left\{t  > \tau_{0} \, : \, |X^{(j)} -  X^{(k)}(t)| = d \; \mbox{for some} \; j \neq k \right\},
\end{align}
where the characteristic length scale $d$ is given by $d \asymp \ell s_w$.
In this case, the two clusters $j$ and $k$ merge except for some set of realizations which is exponentially small in $\gamma$. It follows from~\eqref{eq:microscopic-system} that the merging process can be described approximately by the following SDE
\begin{align}
    \begin{split}
        \d \bra[\big]{
            X^{(j)}_t - X^{(k)}_t
        }
         & = - \frac{\gamma w''(0)}{\ell} \bra[\big]{ m^{(j)} + m^{(k)}}\bra[\big]{X^{(j)}_t - X^{(k)}_t} \d t  \\
         & \qquad+\frac{\sqrt{2}}{\sqrt{N m^{(j)}}} \d W^{(j)}_t - \frac{\sqrt{2}}{\sqrt{N m^{(k)}}} \d W^{(k)}_t.
    \end{split}
\end{align}
In particular, since the martingale term has quadratic variation of order $\mathcal{O}(t/N)$, for $N$ large enough, the drift term dominates and the cluster centres converge exponentially fast. The new cluster with index set $I^j \cup I^k$ has mass
\begin{align}
    m{(j)} + m{(k)}
\end{align}
and follows a Brownian motion of the form
\begin{align}
    X^{(j,k)}_t
    = X^{(j,k)}_{\tau_1} + \frac{\sqrt{2}}{\sqrt{N (m^{(j)} + m^{(k)})}} W^{(j,k)}_t,
\end{align}
where $W^{(j,k)}_t := \frac{1}{\sqrt{2}} \left( W^{(j)}_t + W^{(k)}_t \right)$ is a standard Brownian motion and the centre of the new cluster starts at some time $\tau_1$ at the weighted average
\begin{align}
    \label{eq:cluster-center-after-merging}
    X^{(j,k)}_{\tau_1}
    =  \frac{m^{(j)} X^{(j)}_{\tau_1}  + m^{(k)}X^{(k)}_{\tau_1} }{m^{(j)}+ m^{(k)}},
\end{align}
where we neglected the influence of the Brownian motions during the duration of merging as well as the time the merging takes.

Relabeling the clusters to take the merging into account, we now obtain $K(\tau_1)=K(\tau_0)-1$ clusters.
The coalescing process continues until a unique cluster with mass $1$ and variance $\frac{\ell}{\gamma w''(0)}$ forms.
Neglecting the set of realizations on which merging of touching clusters does not occur, or mass leakage occurs, it follows from standard properties\footnote{For a standard Brownian motion $W_t$ and some $\sigma>0$, the hitting time $\tau_{a,-b}:=\inf\set{t>0\st \sigma W_t \in \{a, -b\}}$ (with $a,b>0$) satisfies $\expect \pra*{\tau_{a,-b}}=ab/\sigma^2$. Therefore, since by \eqref{eq:clust_cent_mass} the difference of two cluster centres behaves (before merging) as $\sigma B_t$ with $\sigma^2 = \frac{2}{N}\bra*{\frac{1}{ m^{(j)}} + \frac{1}{ m^{(k)}}}$ and $B_t$ a standard Brownian motion, we see that for two clusters on a periodic domain $[0,1]$, the timescale for the two clusters to merge is of order $N$.} of the one-dimensional Brownian motion in a bounded domain that a unique cluster will form almost surely in finite time on a timescale of order $\tcoalesc\asymp N$.

\subsection{The extended Markovian dynamics with mass exchange}\label{ssec:model:MassExchange}

We now extend the model of the Markovian dynamics of the clusters by allowing the clusters to exchange mass.
Under~\cref{w-assumption}, we propose the following model, which we derive in~\cref{ssec:derivation}: Consider the regime $\gamma \ell \gg 1$ and $\ell\ll 1$.
We assume that at time $t$, there are $K(t)$ clusters with centres $X^{(1)}< \dots< X^{(K(t))}$ and masses $m^{(1)}, \dots, m^{(K(t))}\in(0,1)$ such that $\sum_{j=1}^{K(t)}m^{(j)}=1$.
The positions and the masses of the clusters behave as follows:
\begin{subequations}
    \label{eq:joint-model}
    \begin{empheq}[box=\fbox]{align}
        \label{eq:joint-model:X}
        \d X^{(j)}_t &= \frac{\sqrt{2}}{\sqrt{N m^{(j)}}} \d W^{(j)}_t,  \\
        \label{eq:joint-model:m}
        \frac{\d}{\d t} m^{(j)} &
        = \phi^{(r)}_{j-1} m^{(j-1)}
        + \phi^{(l)}_{j+1} m^{(j+1)}
        - \bra*{
            \phi^{(r)}_{j}+ \phi^{(l)}_{j}} m^{(j)
            },
    \end{empheq}
\end{subequations}
for $j=1,\dots, K(t)$, where the rates $\phi^{(r)}_j$ and $\phi^{(l)}_j$ at which cluster $j$ with mass $m^{(j)}$ loses mass to the cluster to the right and to the left, respectively, are given by
\begin{align}
    \phi^{(r)}_j := \phi^{(r)}(m^{(j)}, X^{(j-1)}, X^{(j)}, X^{(j+1)}), \quad  \phi^{(l)}_j := \phi^{(l)}(m^{(j)}, X^{(j-1)}, X^{(j)}, X^{(j+1)}),
\end{align}
where we define
\begin{align}
    \label{eq:def:phi-r}
    \phi^{(r)}(m, x_l, x, x_r)
    = \frac{1}{d_{\rm dir}(x, x_r) - 2 s_w\ell}
    \sqrt{\frac{e \gamma w''(0)}{2\pi \ell}}\sqrt{m}e^{-\gamma \ell \Delta m}
\end{align}
as well as
\begin{align}
    \label{eq:def:phi-l}
    \phi^{(l)}(m, x_l, x, x_r)
    = \frac{1}{d_{\rm dir}(x_l, x) - 2 s_w\ell}
    \sqrt{\frac{e \gamma w''(0)}{2\pi \ell}}\sqrt{m}e^{-\gamma \ell \Delta m}
\end{align}
Here, $\Delta =  - \inf w$ and $d_{\rm dir}$ denotes the directed distance on the torus $\T \simeq [0,1]$:
\begin{align}
    \label{eq:def:directed-distance}
    d_{\rm dir}(x_l, x_r) := \begin{cases}
                                 x_r - x_l    & \text{if } x_r \ge x_l, \\
                                 x_r +1 - x_l & \text{if } x_r < x_l.
                             \end{cases}
\end{align}
Throughout this paper, we will write $X^{(0)}:=X^{(K(t))}$ and $X^{(K(t)+1)}:=X^{(1)}$ in order to simplify the notation. This makes sense since our model is defined on the torus $\T$.

The model~\eqref{eq:joint-model} is subject to the following boundary conditions:
\begin{itemize}
    \item \textbf{Cluster merging:} If two clusters with centres $X^{(j)}$ and $X^{(k)}$ meet, they merge into a single cluster of mass $m^{(j)} + m^{(k)}$ and position given by~\eqref{eq:cluster-center-after-merging}, so that the system~\eqref{eq:joint-model} starts anew with the clusters $X^{(j)}$ and $X^{(k)}$ being replaced by a single cluster with starting position $X^{(j,k)}$ and mass $m^{(j)} + m^{(k)}$.
    \item \textbf{Cluster dissolution:}
          If the mass of a cluster approaches zero (this will happen if the cluster mass has fallen below the critical mass $\mcrit$), the cluster is removed from the dynamics~\eqref{eq:joint-model}.
\end{itemize}
This process continues until only one cluster remains.

As the derivation of the model~\eqref{eq:joint-model} in~\cref{ssec:derivation} below shows, $1/\phi^{(r)}_j$ and $1/\phi^{(l)}_j$ are the waiting times for a particle inside cluster $j$ to leave the cluster and join the cluster to the right and left, respectively. These timescales are of the order $e^{\gamma \ell \Delta m^{(j)}}$ by the Eyring--Kramers law.

\subsection{Application to the mean-field equation and dynamical metastability}\label{ssec:model:DynMetastable}

The behaviour of the mean-field PDE~\eqref{eq:McKean-Vlasov-PDE} after initial clustering in the regime $\gamma\ell \gg 1$ and $\ell \ll 1$ is governed by the limit $N\to\infty$ of~\eqref{eq:joint-model}, so that the cluster centres are frozen in space and the mass exchange ODE system~\eqref{eq:joint-model:m} determines the dynamics.
This is also reflected by the coalescence timescale $\tcoalesc \asymp N$.
This suggests that after initial clustering, coarsening in the PDE~\eqref{eq:McKean-Vlasov-PDE} occurs by mass exchange between clusters which is described by~\eqref{eq:joint-model:m}.
Indeed, if we start the PDE with a multi-clustered initial state, then the ODE~\eqref{eq:joint-model:m} approximately describes the PDE evolution, as shown in~\cref{fig:3-clusters-ode-comparison} for the potential~\eqref{eq:w-def:Hegselmann--Krause-special}.~\cref{fig:piecewise-parabolic:3-clusters-ode-comparison} shows a similar comparison for the truncated piecewise parabolic potential
with parameters $\alpha, \beta > 0$, $a \in (0,1)$ given by
\begin{align}
    \label{eq:w-def:piecewise-parabolic}
    w(x) = \begin{cases}
               \alpha (x^2 - a^2)/2 + \beta (a^2 - 1)/2 & \text{if } |x| \le a,     \\
               \beta (x^2 - 1)/2                        & \text{if } a < |x| \le 1, \\
               0                                        & \text{if } |x| > 1 .
           \end{cases}
\end{align}

\subsubsection*{Dynamical metastability}For $\gamma \ell \gg 1$, the timescales $e^{\gamma\ell \Delta m}$ (see~\eqref{eq:clusters-masses:Eyring-Kramers} below) for a particle to leave a cluster are exponentially large in the cluster masses $m^{(j)}$, so that we expect dynamical metastability for the cluster mass dynamics. Indeed, assume, for instance, that we start the PDE with two clusters of masses $m_a, m_b \approx \frac{1}{2}$, with $m_a>m_b$.
In the regime $\gamma\ell  \gg 1$, $\ell \ll 1$, we find from~\eqref{eq:joint-model:m} that mass is exchanged at a rate that is exponentially small in $\gamma \ell m_a$ and $\gamma \ell m_b$. Therefore, it will take exponentially long for the stronger cluster to absorb the smaller one. Indeed, we observe both for the PDE and the mass exchange ODE~\eqref{eq:joint-model:m} that the masses change only very little over a large amount of time to then suddenly collapse to a state in which only the initially heavier cluster remains, see~\cref{fig:3-clusters-ode-comparison}.

\begin{figure}
    \centering
    \includegraphics[width=\textwidth]{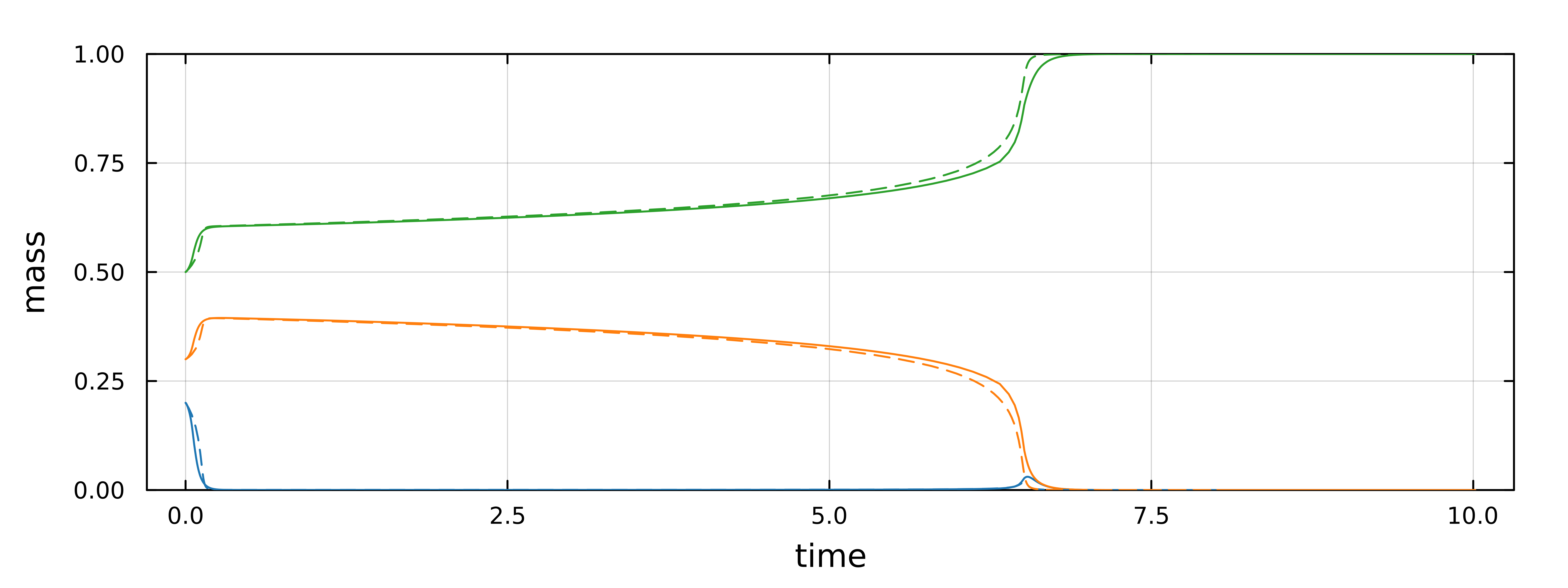}

    \caption{
        Comparison of the masses from the PDE simulation from~\cref{fig:mass-exchange-three-clusters} (solid lines) with the ODE model~\eqref{eq:joint-model:m} (dashed lines).
    }
    \label{fig:3-clusters-ode-comparison}
\end{figure}
\begin{figure}
    \centering
    \begin{subfigure}[c]{0.3\textwidth}
        \centering
        \includegraphics[width=\textwidth]{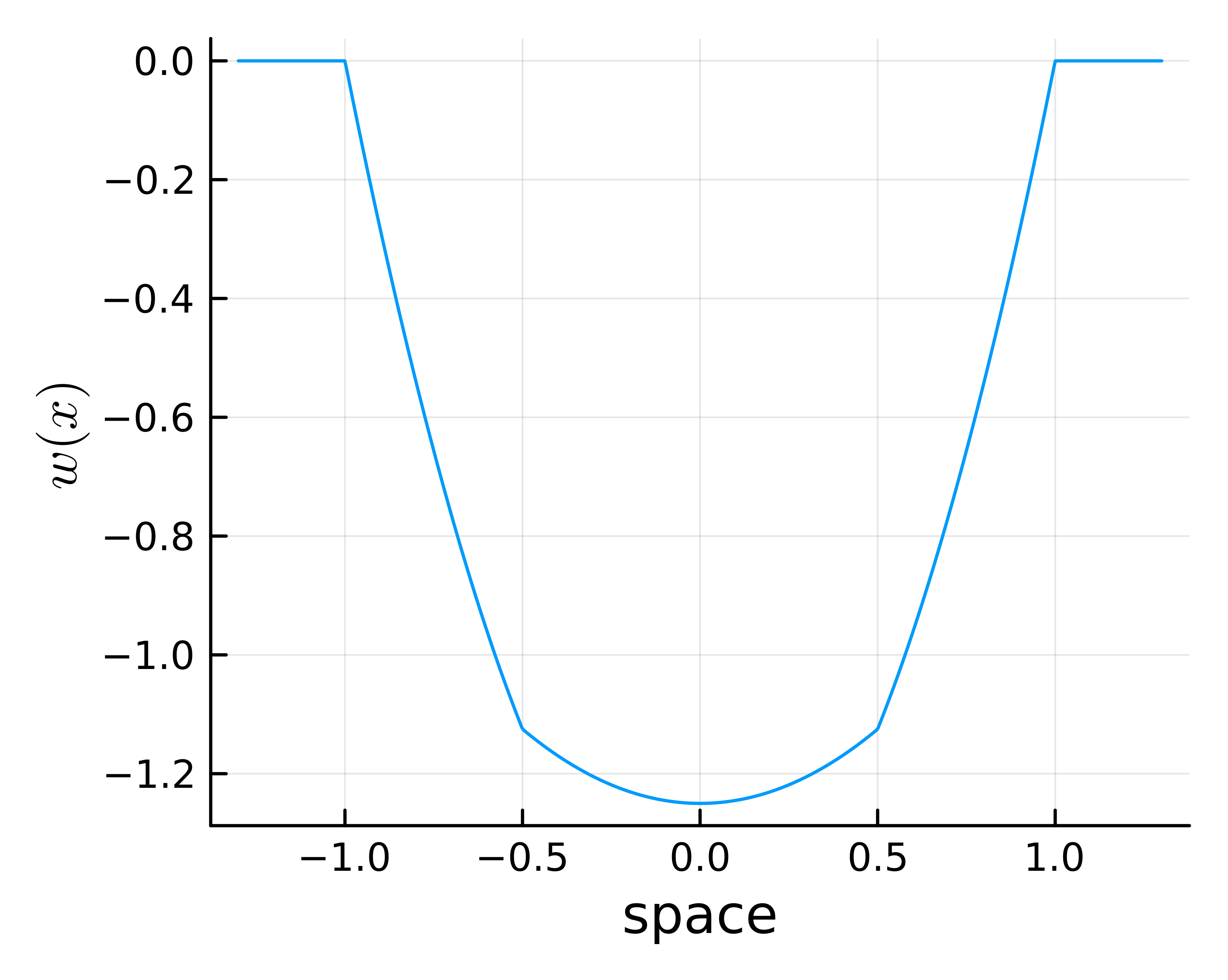}
    \end{subfigure}
    \hfill
    \begin{subfigure}[c]{0.68\textwidth}
        \centering
        \includegraphics[width=\textwidth]{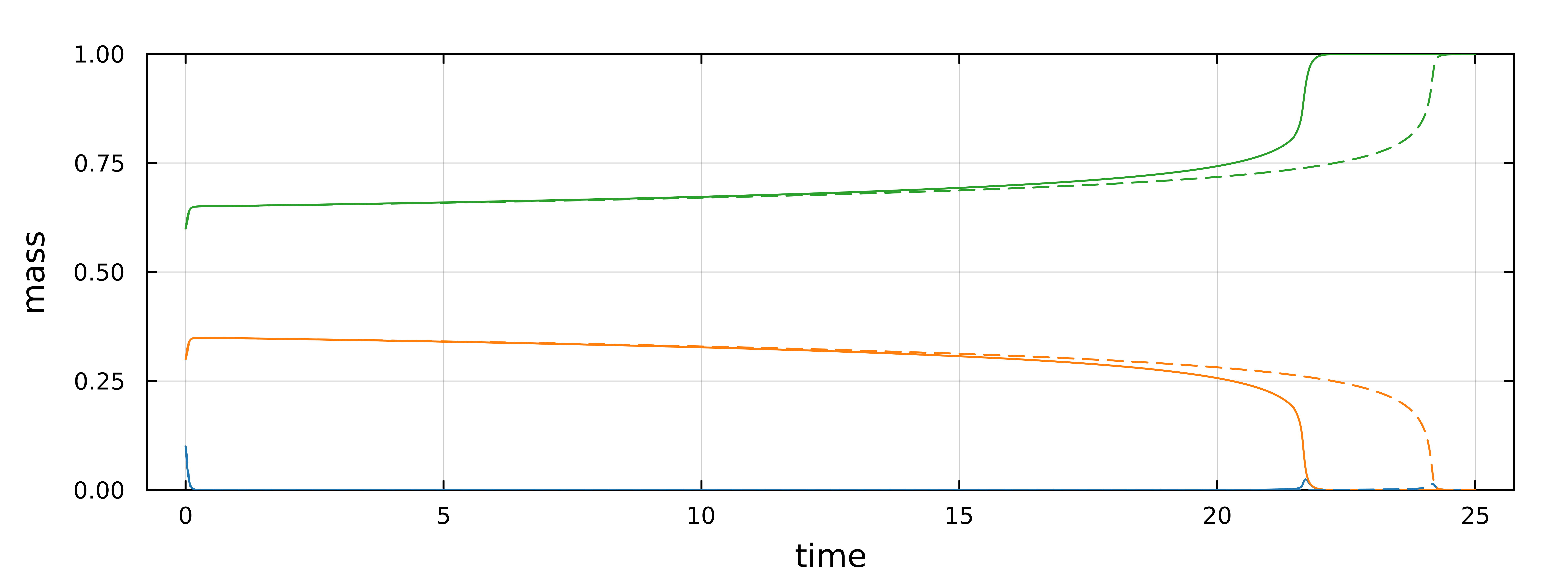}
    \end{subfigure}
    \caption{
        \textbf{Left:} The piecewise parabolic potential~\eqref{eq:w-def:piecewise-parabolic}
        with $\alpha = 1.0$, $\beta = 3.0$, and $a = 0.5$.
        \textbf{Right:} Comparison of the masses from the PDE simulation from~\cref{fig:mass-exchange-three-clusters} (solid lines) with the ODE model~\eqref{eq:joint-model:m} (dashed lines) for the same potential~\eqref{eq:w-def:piecewise-parabolic}, where $\ell = 0.05$, $\gamma = 500$, initial masses $(0.1, 0.3, 0.6)$ and initial positions $(\frac{1}{6}, \frac{3}{6}, \frac{5}{6})$.
    }
    \label{fig:piecewise-parabolic:3-clusters-ode-comparison}
\end{figure}
\subsection{Limitations of the reduced model\texorpdfstring{~\eqref{eq:joint-model}}{}}\label{ssec:model:limitations}

Observe that the model is only a valid approximate description of the PDE with a multi-clustered initial state if $X^{(j+1)}-X^{(j)} \gg \ell s_w$, that is, the initial centres of the clusters are sufficiently far apart, since otherwise deterministic merging phenomena may occur.

Moreover, the model~\eqref{eq:joint-model:m} is in general not a good approximation of the dissolution of clusters. Once one of the clusters has a mass smaller than the critical mass $\mcrit$ (see~\cref{sec:critical-mass}), then the dynamics of the particles of this cluster are not described by the Ornstein--Uhlenbeck approximation from~\cite{GarnierEtAl2017} and the cluster shape is in general not well approximated by~\eqref{eq:multi-cluster-state}. Moreover, in this situation, not only the behaviour of $w$ around zero but its general shape determines the behaviour and the exact evolution of mass. In particular, the description of the ODE~\eqref{eq:joint-model:m} is in general only valid until the first dissolution of a cluster.

\subsection{Heuristic derivation of the reduced models\texorpdfstring{~\eqref{eq:joint-model} and~\eqref{eq:joint-model-jumps}}{}}\label{ssec:derivation}

\subsubsection*{The timescale for a particle to leave a cluster}
As before, we assume that for some fixed time $t\ge 0$,
the particles~\eqref{eq:microscopic-system} are arranged in clusters of masses $m^{(1)}, \dots, m^{(K(t))}\in (0,1)$ and centres $X^{(1)} < \dots < X^{(K(t))}$. For simplicity, let us assume that the particles move on the real line $\R$ instead of the torus.
Each cluster $X^{(j)}$ consists of approximately $m^{(j)} N$ particles.  We would like to compute the timescale of a particle $X^i_t$ in the $j$-th cluster to leave the cluster and join another one, where $i\in\{1,\dots, N\}$ and $j\in\{1,\dots,K(t)\}$.
The empirical measure $\mu^N_t:=\frac{1}{N}\sum_{i=1}^N \delta_{X^i_t}$ can be approximated by a linear combination of Gaussian densities centered at the cluster centres $X^{(k)}_t$ with variances $\sigma^2_k = \frac{\ell}{\gamma w''(0) m^{(k)}}$ and weights $m^{(k)}$:
\begin{align}
    \mu^N_t \approx \sum_{k=1}^{K(t)} m^{(k)}g(\dummy; X^{(k)}, \sigma_k^2) =
    \sum_{k=1}^{K(t)} \frac{ m^{(k)}}{\sqrt{2\pi \sigma^2_k}} \exp\bra*{-\frac{1}{2\sigma^2_k}{\bra*{\dummy-X^{(k)}_t}^2}},
\end{align}
where $g(x;\mu, \sigma^2)$ is the density of a normal distribution with mean $\mu$ and variance $\sigma^2$.
We are interested in the event that the particle  $X^i_t$ of cluster $j$ leaves this cluster and joins another cluster which can be assumed to be one of the neighboring clusters $j-1$ or $j+1$, since the probability that a particle jumps to a non-neighboring cluster is of lower order.
The particle $X^i_t$ moves in the effective potential
\begin{align}
    \label{eq:effective-potential-approx}
    \bra*{W_{\gamma,\ell}\ast \mu^N_t}(x) \approx V_{\rm eff}(x) :=
    \sum_{k=1}^{K(t)} m^{(k)} \bra*{\gamma \ell w\bra*{\frac{\dummy}{\ell}}\ast g(\dummy; X^{(k)}, \sigma_k^2)}(x),
\end{align}
that is, $X^i_t$ approximately follows the SDE $\d X^i_t = -  V'_{\rm eff}(X^i_t) \d t + \sqrt{2} \d B^i_t$.

In order to approximate $V_{\rm eff}$ around its local minimum $X^{(j)}$, we replace $w(x)$
in~\eqref{eq:effective-potential-approx}
by the quadratic approximation $w(x) \approx -\Delta + w''(0)\frac{x^2}{2}$ for small $\abs{x}$, where $\Delta :=  - \inf w= -w(0)$. This is justified by the fact that $\sigma_k^2$ is of order $\ell/\gamma$, while the interaction range of the potential $W_{\gamma,\ell}$ is of order $\ell \gg \revfirst {{\sqrt{\ell/\gamma}}}$.
Using this approximation, we obtain then
\begin{align}
    \label{eq:V-eff-approx}
    \begin{split}
        V_{\rm eff}(x) & \approx m^{(k)}  \bra*{\bra*{-\gamma \ell\Delta + \frac{\gamma w''(0)}{2\ell}{(\dummy)^2}} \ast g(\dummy; X^{(k)}, \sigma_k^2)}(x) \\
                       & = - \gamma \ell \Delta m^{(k)} + \frac
        {\gamma w''(0)m^{(k)}}{2\ell}
        \bra*{\bra*{x-X^{(k)}}^2 + \sigma_k^2}
        \\ & = - \gamma \ell \Delta m^{(k)} +  \frac
        {1}{2\sigma_k^2}{\bra*{x-X^{(k)}}^2} + \frac{1}{2},
    \end{split}
\end{align}
for $x$ close to $X^{(k)}$, see also~\cref{fig:V-eff-approximation}.
Fix some $a\in \R$ inside the potential well of $V_{\rm eff}$ around $X^{(j-1)}$ which is close enough to the right boundary of this potential well and some $b\in \R$ inside the well of $V_{\rm eff}$ around $X^{(j+1)}$ which is close enough to the left boundary of this well.
Let $\tau_{a,b}$ be the time of $X^i_t$ to hit $(-\infty, a]\cup [b,\infty)$ and set $u(x)= \condexpect*{\tau_{a,b} \given X^i_0 = x}$. By classical results~\cite[Theorem 5.7.3]{dembo2001large}~\cite[Sec. 7.3]{pavliotis2014book}, $u$ satisfies the boundary value problem
\begin{align}
    \label{eq:clusters-masses:BVP}
    u''(x) - V_{\rm eff}'(x) u'(x) & = -1 \qquad \text{in }(a,b), \\
    u(a) = 0, \quad u(b)           & = 0,
\end{align}
which has the solution
\begin{align}
    \label{eq:hitting-time-explicit}
    u(x) & = - \int_a^x \int_a^y e^{V_{\rm eff}(y)- V_{\rm eff}(z)} \dx z \dx y  + \frac{
        \int_a^x e^{V_{\rm eff}(y)} \dx y \cdot
        \int_a^b \int_a^y e^{V_{\rm eff}(y)- V_{\rm eff}(z)} \dx z \dx y
    }{
        \int_a^b e^{V_{\rm eff}(y)} \dx y
    }
\end{align}
for $x\in[a,b]$.
\begin{figure}
    \includegraphics[width=0.9\textwidth]{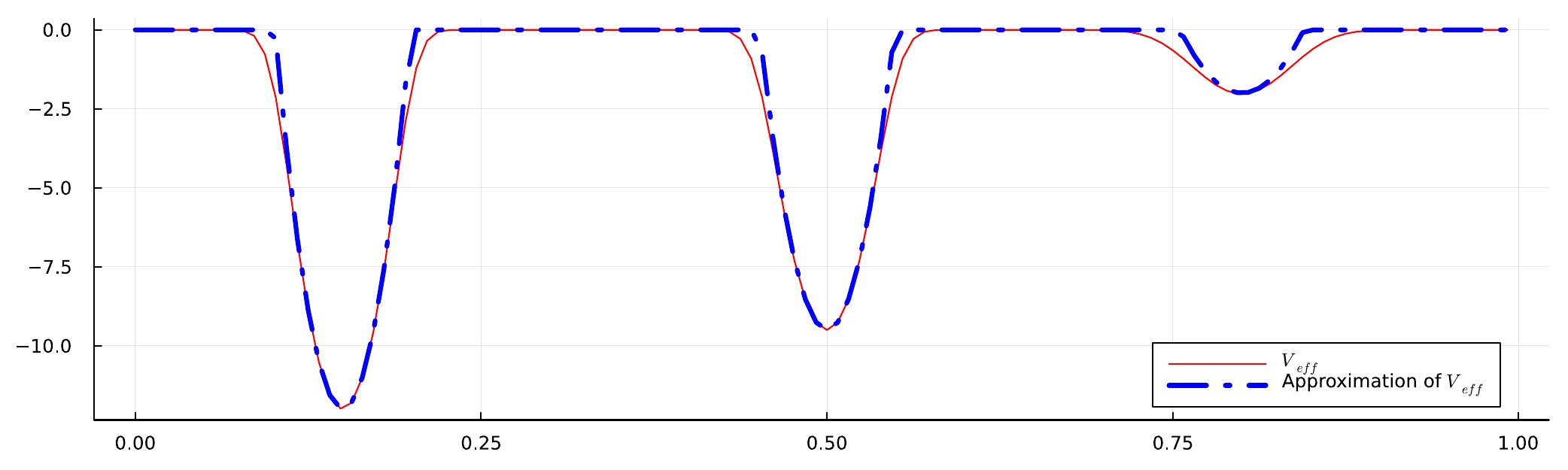}
    \caption{The effective potential $V_{\rm eff}$ for the Hegselmann--Krause prototype~\eqref{eq:w-def:Hegselmann--Krause-special}. The solid curve shows the convolution of $W_{\gamma,\ell}$ with a Gaussian mixture, the dashed curve shows the locally quadratic approximation \eqref{eq:V-eff-approx} around the minima $X^{(j)}$ which is truncated by zero from above. }
    \label{fig:V-eff-approximation}
\end{figure}
We also need the probability that a particle $X^i_t$ which starts at $x$ hits $a$ before hitting $b$ which is given by the equilibrium potential
    $h_{a,b}(x):= \condprob*{\tau_a < \tau_b \given X^i_0 = x}$. Then, $h_{a,b}$ satisfies the boundary value problem given by (see e.g.~\cite{Berglund2013})
\begin{align}
    \label{eq:clusters-masses:BVP-equilibrium-potential}
    h_{a,b}''(x) - V_{\rm eff}'(x) h_{a,b}'(x)  = 0 \qquad \text{in }(a,b), \\
    h_{a,b}(a) = 1, \quad h_{a,b}(b) = 0.
\end{align}
Therefore, we obtain
\begin{align}
    \label{eq:clusters-masses:equilibrium-potential-solution}
    h_{a,b}(x) & = \frac{
        \int_x^b e^{V_{\rm eff}(y)} \dx y
    }
    {
        \int_a^b e^{V_{\rm eff}(y)} \dx y
    }
    \qquad \text{for all } x\in[a,b].
\end{align}
By carefully examining the shape of the effective potential $V_{\rm eff}$, Laplace's method yields the following result for the limit $\gamma \to \infty$.
    \begin{proposition}
        \label{prop:prop-laplace-hitting-time}
        In addition to~\cref{w-assumption}, suppose that $w$ is $C^3$ on $[-r_w, r_w]$ for some $r_w\in(0,s_w]$ and that there exists some
        $\delta_w\in(0, s_w)$ and $\tau>0$ such that
        \begin{align}
            \label{eq:w-assumption:linear-upper-bound-around-sw}
            w(z) \le \tau\cdot (z - s_w) \quad \text{for all } z \in [ \delta_w, s_w].
        \end{align}
        Let $K\ge 3$ be a natural number, $\gamma>0$, $\ell>0$ and fix $j\in\{2,\dots, K-1\}$.
        Let the cluster positions $X^{(1)} < \dots < X^{(K)} \in \R$ and masses $m^{(1)}, \dots, m^{(K)} \in (0,1)$ be given such that $\sum_{j=1}^K m^{(j)} = 1$ and such that $X^{(k+1)} - X^{(k)} > 2s_w\ell$ for all $k=1,\dots,K-1$. Let $\alpha \in (0,s_w)$ such that
        \begin{align}
            \label{eq:w-assumption:alpha-condition}
            \frac{\abs{w(\alpha)}}{\Delta} < \min\set*{\frac{m^{(j)}}{m^{(j-1)}}, \frac{m^{(j)}}{m^{(j+1)}}}
        \end{align}
        and define\footnote{The condition \eqref{eq:w-assumption:alpha-condition} ensures that $a$ is sufficiently close to the right boundary of the potential well around $X^{(j-1)}$ and that $b$ is sufficiently close to the left boundary of the potential well around $X^{(j+1)}$.} $a:= X^{(j-1)} + \alpha \ell$ and $b = X^{(j+1)} - \alpha \ell$. 

        Define the Gaussian mixture
        \begin{align}
            \label{eq:gaussian-mixture}
            \rho(\dummy) := \sum_{k=1}^K m^{(k)} g(\dummy; X^{(k)}, \sigma_k^2), \quad \text{where } \sigma_k^2 := \frac{\ell}{\gamma w''(0) m^{(k)}},
        \end{align}
        and define the effective potential
        \begin{align}
            \label{eq:prop-laplace:effective-potential}
            V_{\rm eff}(x) := \bra*{\gamma \ell w\bra*{\frac{\dummy}{\ell}}\ast \rho}(x).
        \end{align}
        Consider a diffusive particle $X^i_t$ on the real line moving according to $\d X^i_t = -  V'_{\rm eff}(X^i_t) \d t + \sqrt{2} \d B^i_t$ in the effective potential \eqref{eq:prop-laplace:effective-potential} which is formed by the multi-cluster state $\rho$. Consider the hitting time
        \begin{align}
            \tau_{a,b} := \inf\{t\ge 0: X^i_t \in (-\infty, a]\cup [b,\infty)\}.
        \end{align}
        Fix $x_0 \in (X^{(j)}- s_w\ell, X^{(j)} + s_w\ell)$. Then, we have the asymptotic equivalence
        \begin{equs}
            \label{eq:clusters-masses:Eyring-Kramers}
            {}& \qquad\condexpect*{\tau_{a,b} \given X^i_0 = x_0} \\
            &\sim{}
            \frac{
                \bra*{X^{(j+1)} - X^{(j)} -2s_w\ell} \cdot
                \bra*{X^{(j)} - X^{(j-1)} -2s_w\ell}
            }{
                \bra*{X^{(j+1)} - X^{(j-1)} - 2s_w\ell}
            }
            \sqrt{\frac{2\pi \ell}{\gamma w''(0) m^{(j)}}} e^{\gamma \ell \Delta m^{(j)} - \frac12}
        \end{equs}
        as $\gamma\to\infty$, which is an Eyring--Kramers formula for the expected hitting time of the $j-1$-th and the $j+1$-th cluster when starting inside the $j$-th cluster. Moreover, we obtain for the equilibrium potential
        \begin{align}
            \label{eq:clusters-masses:equilibrium-potential}
            \condprob*{\tau_a < \tau_b \given X^i_0 = x} \sim p^{(j)}:=\frac{
                X^{(j+1)} - X^{(j)} - 2s_w\ell
            }
            {
                X^{(j+1)} - X^{(j-1)} - 4s_w\ell
            } \quad \text{as } \gamma\to\infty.
        \end{align}
    \end{proposition}
    We will prove \cref{prop:prop-laplace-hitting-time} in~\cref{app:proof-prop-laplace-hitting-time}. The central idea is to use a rigorous version of~\eqref{eq:V-eff-approx} to approximate $V_{\rm eff}$ around its minima $X^{(1)}, \dots, X^{(K)}$. Once a particle has left a valley formed by one of the clusters, the time it takes to reach a neighbouring cluster is determined by the widths of the potential ridges between two clusters. As we will show, these widths are given by $X^{(j+1)} - X^{(j)} - 2s_w\ell$ in the limit $\gamma\to\infty$ for the ridge between cluster $j$ and cluster $j+1$. This explains the prefactors in~\eqref{eq:clusters-masses:Eyring-Kramers}.

    With  \eqref{eq:clusters-masses:Eyring-Kramers} and \eqref{eq:clusters-masses:BVP-equilibrium-potential}, we can describe the mass exchange as follows: A particle $X^i_t$ of cluster $j$ waits for an exponentially distributed time with expectation $\condexpect*{\tau_{a,b} \given X^i_0 = x}$ inside the cluster $j$. When it leaves the cluster after this random time, it joins the cluster $j-1$ with probability $p^{(j)}$ and the cluster $j+1$ with probability $1-p^{(j)}$. This gives the following waiting times for jumps of the particle $X^i_t$ inside cluster $j$ to the cluster $j-1$ and $j+1$, respectively:
\begin{align}
    T_{\rm left}^{(j)}  & = \frac{  \condexpect*{\tau_{a,b} \given X^i_0 = x}}{p^{(j)}}
    \approx \bra*{d_{\rm dir}\bra*{X^{(j-1)}, X^{(j)}}- 2 s_w \ell}
    \sqrt{\frac{2\pi \ell}{e \gamma w''(0) m^{(j)}}} e^{\gamma \ell \Delta m^{(j)}}       \\
    T_{\rm right}^{(j)} & = \frac{  \condexpect*{\tau_{a,b} \given X^i_0 = x}}{1-p^{(j)}}
    \approx \bra*{d_{\rm dir}\bra*{X^{(j)}, X^{(j+1)}}- 2 s_w \ell}
    \sqrt{\frac{2\pi \ell}{e \gamma w''(0) m^{( j)}}} e^{\gamma \ell \Delta m^{(j)}}
\end{align}
where we used the directed distances~\eqref{eq:def:directed-distance} to account for the fact that we are on the one-dimensional torus. Now the waiting times $T_{\rm left}^{(j)}$ and $T_{\rm right}^{(j)}$ are just the inverses of the rates $\phi^{(l)}_j$ and $\phi^{(r)}_j$ defined in~\eqref{eq:def:phi-l} and~\eqref{eq:def:phi-r}, respectively. From this, we obtain the model~\eqref{eq:joint-model:m}. More precisely, we can use the jump process described in the next section to model the mass exchange, from which we can then derive~\eqref{eq:joint-model:m}.

\subsection{Modeling the mass exchange via jumps of particles}\label{ssec:model:jumps}
Based on the waiting times for particles from one cluster to join adjacent clusters, we can write down a continuous-time stochastic Markov process for the mass exchange of the clusters which features jumps of particles from one cluster to another. Assuming again that the particles are arranged in $K(t)\in\N^+$ clusters at time $t\ge 0$ with masses $m^{(j)}=M^{(j)}/N$ and cluster centres $X^{(j)}$ where $j=1,\dots, K(t)$, we  model the mass exchange of these clusters by a continuous time Markov chain with state space
\begin{align}
    \mathfrak{M}(N, K(t)) :=
    \set*{\bra*{M^{(1)}, \dots, M^{(K(t))}} \in \N^{(K(t))} \colon M^{(1)} + \dots + M^{(K(t))}=N},
\end{align}
where $N$ is the total number of particles as before.

For $i,j\in\{1,\dots, K(t)\}$, we denote that two clusters $i$ and $j$ are neighbors on $\T$ by writing
\begin{align}
    i \sim_{K(t)} j \qquad\vcentcolon\Longleftrightarrow \qquad \abs{i-j}=1 \text{ or }(i,j) \in \{(1,K(t)), (K(t),1)\} \,.
\end{align}
We say that the state $\mathbf{\widetilde{M}} = \bra[\big]{\widetilde{M}^{(1)}, \dots, \widetilde{M}^{(K(t))}}\in \mathfrak{M}(N, K(t))$  arises from the state $\mathbf{M} = \bra[\big]{M^{(1)}, \dots, M^{(K(t))}} \in \mathfrak{M}(N, K(t))$
by a jump of a particle from cluster $i$ to cluster $j$ 
if $\widetilde{M}^{(i)}= M^{(i)}-1$, $\widetilde{M}^{(j)}= M^{(j)}+1$ and $\widetilde{M}^{(k)}= M^{(k)}$ for all $k\in \{1,\dots, K(t)\}\setminus\{i,j\}$.
The $Q$-matrix of the Markov chain $\bra*{M^{(1)}_t, \dots, M^{(K(t))}_t}$ is defined as follows. If $ \mathbf{M} = \mathbf{\widetilde{M}}$, it is defined by
\begin{align}
    Q\bra*{\mathbf{M}, \mathbf{M}} :=
    \sum_{j=1}^{K(t)}
    M^{(j)}\bra*{\phi_j^{(r)}+\phi_j^{(l)}}\bra*{
        \frac{M^{(j)}}{N},
        X^{(j-1)}, X^{(j)}, X^{(j+1)}}
\end{align}
Moreover, if $\mathbf{\widetilde{M}}$ arises from $\mathbf{M}$ by a jump of a particle from cluster $i$ to cluster $j$ where $i\sim_{K(t)} j$ are neighbors, then we set
\begin{align}
    Q\bra*{\mathbf{M}, \mathbf{\widetilde{M}}}
    :=   \frac{M^{(i)}}{N} \bra*{\phi_i^{(r)}+ \phi_i^{(l)}}
    \bra*{
        \frac{M^{(i)}}{N}, X^{(i-1)}, X^{(i)}, X^{(i+1)}}.
\end{align}
In all other cases, we set $Q\bra*{\mathbf{M}, \mathbf{\widetilde{M}}} = 0$.
The corresponding chemical master equation (Kolmogorov forward equation) reads
\begin{align}
    \label{eq:clusters-masses:master-equation}
     & \frac{\partial}{\partial_t}
    \condprob*{ \mathbf{M}_t = \mathbf{M} \given \mathbf{M}_0 = \widebar{\mathbf{M}}}
    = \sum_{\widetilde{\mathbf{M}} \in  \mathfrak{M}(N, K(t))}
    \condprob*{\mathbf{M}_t = \widetilde{\mathbf{M}} \given \mathbf{M}_0 = \widebar{\mathbf{M}}}
    \cdot
    Q\bra*{\mathbf{M}, \mathbf{\widetilde{M}}}
\end{align}
With this, we can write down the following model:
\begin{subequations}
    \label{eq:joint-model-jumps}
    \begin{empheq}[box=\fbox]{align}
        \label{eq:joint-model-jumps:X}
        &\d X^{(j)}_t = \frac{\sqrt{2}}{\sqrt{N m^{(j)}}} \dx W^{(j)}_t \qquad\text{for $j=1,\dots, K(t)$} \\
        \label{eq:joint-model-jumps:m}
        &\parbox{0.7\textwidth}{The numbers of particles in the clusters $(M^{(1)}, \dots, M^{(K(t))})$ evolve as a continuous time Markov chain with state space $\mathfrak{M}(N, K(t))$ and $Q$-matrix as defined above.}
    \end{empheq}
\end{subequations}
This model is augmented by the same boundary conditions as the model~\eqref{eq:joint-model}, that is, clusters merge when getting close and if the number of particles $M^{(j)}$ in a cluster approaches zero, the cluster is removed from the system. 

In the limit $N\to\infty$, we expect $\frac{M^{(j)}}{N} \to m^{(j)}$ and we can formally replace the chemical master equation~\eqref{eq:joint-model-jumps:m} by the reaction rate equation~\eqref{eq:joint-model:m}, see~\cite{gillespie2000chemical} for details.

Note that the ODE~\eqref{eq:joint-model:m} exhibits dynamical metastability in the same way as the continuous-time jump-diffusion process given by the Q-matrix defined above exhibits (stochastic) metastability.

\begin{figure}
    \includegraphics[width=0.9\textwidth]{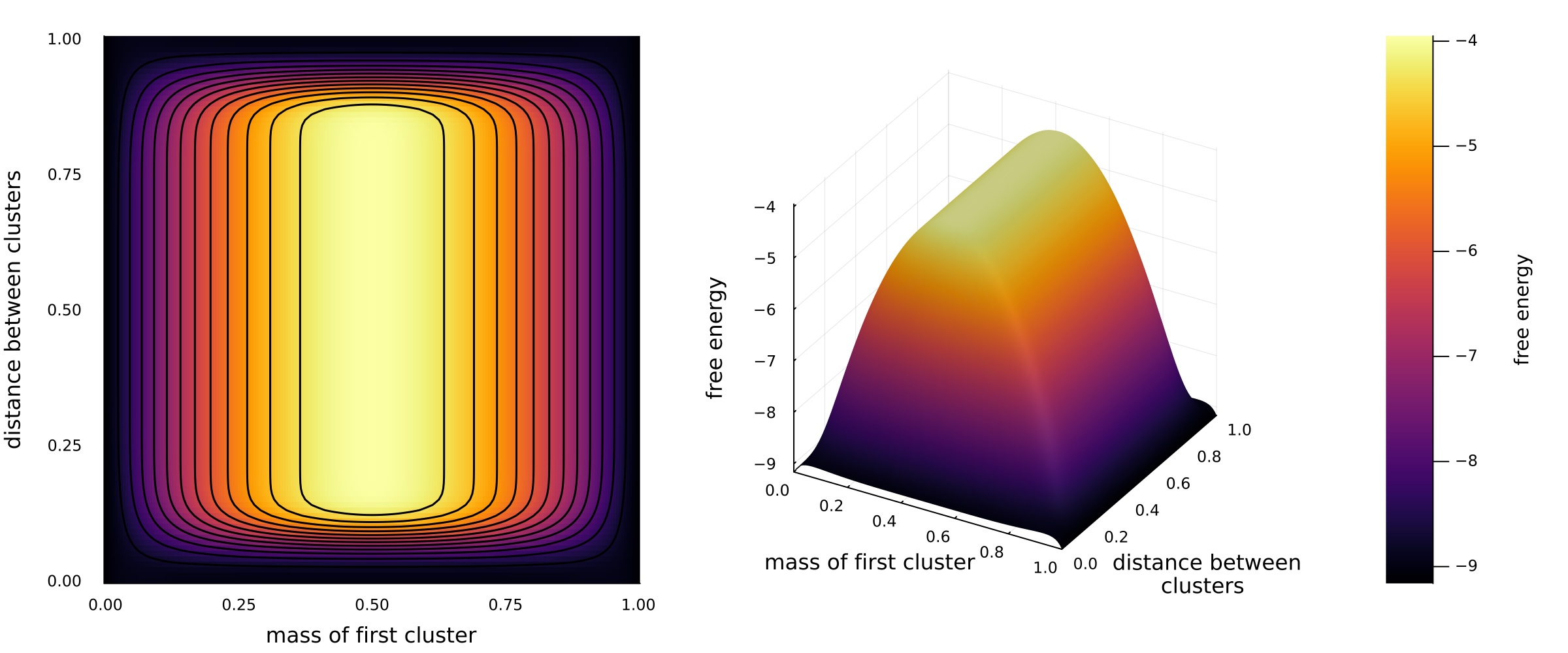}
    \caption{Free energy landscape for a two-cluster state~\eqref{eq:multi-cluster-state} as a function of the two masses $m_1\in [0,1]$, $m_2 = 1- m_1$ with distance between cluster centres $d:=X^{(1)}- X^{(2)}\in [0,1]$.
    For fixed $d$, the free energy is maximized at $m_1 = m_2 = 1/2$, which is however an unstable state. 
    The free energy was computed by discretizing the torus with $2^9$ grid points. Parameters: $\gamma = 150, \ell = 0.1$, $w$ as in~\eqref{eq:w-def:Hegselmann--Krause-special}.}
    \label{fig:free-energy-landscape}
\end{figure}

\begin{remark}[Existence of multi-cluster steady states]
    \label{rem:multi-cluster-steady-states}
    A multi-cluster state with $K$ clusters of the same masses $\frac{1}{K}$ and at equidistant positions on the torus corresponds to a stationary state of the ODE~\eqref{eq:joint-model:m}. Indeed, we can prove the existence of stationary states of~\eqref{eq:McKean-Vlasov-PDE} of this form:
    By~\cref{sec:discontinuous-phase-transition}, for $\ell\ll1$ and $\gamma \gg 1$, there exists a non-trivial global minimiser $q_{\gamma,\ell}$ of the free energy. Moreover,  $q_{\gamma,\ell}$ must be a single-cluster state by~\cref{thm:glob-min-sym-decreasing}.
    We show now how to construct multi-cluster stationary states from~$q_{\gamma,\ell}$.

    We view $q_{\gamma,\ell}$ as a $1$-periodic function $\R\to\R$. Then, for every $K\in\N^+$ and assuming that $s_w\ell < 1/2$, the state $\tilde{q}(x):=q_{\gamma,\ell}(Kx)$ is a fixed point of~\eqref{eq:kirkwood-monroe} for $W_{\tilde{\gamma}, \tilde{\ell}}$ where $\tilde{\gamma} = K^2\gamma$ and $\tilde{\ell} = \ell/K$, since
    \begin{align}
            & \bra*{W_{K^2\gamma, \ell /K} \ast \tilde{q}}(x)
        =K\gamma \ell \int_{-\frac{1}{2}}^{\frac{1}{2}} w\bra*{\frac{K y}{\ell}} q_{\gamma,\ell}\bra*{K(x-y)} \dx y
        \\
        ={} & \gamma\ell\int_{-\frac{KL}{2}}^{\frac{KL}{2}} w\bra*{\frac{y}{\ell}} q_{\gamma,\ell}\bra*{Kx-y} \dx y
        = \bra*{
            W_{\gamma,\ell} \ast q_{\gamma,\ell}}(Kx),
    \end{align}
    where we used that $\ell \supp w \subset [-1/2 ,1/2]$ in the last step by assumption. This can be also seen by calculating the Fourier modes: On the one hand, we have that $\tilde{q}(x) = \frac{1}{2\pi} \sum_{k \in \mathcal{K}} \hat{q}_{\gamma,\ell}(k) e^{i K k x}$, where $\hat{q}_{\gamma,\ell}(k)= \int_{[-\frac{1}{2},\frac{1}{2}]} q_{\gamma,\ell}(x) e^{-i k x} \dx x$. On the other hand, the Fourier modes of $W_{\tilde{\gamma}, \tilde{\ell}}$ are given by
    \begin{align}
        \widehat{W}_{K^2\gamma, \ell/K}(k)
         & = K \gamma \ell \int_{[-\frac{1}{2},\frac{1}{2}]} w\bra*{\frac{K y}{\ell}} e^{-i K k y} \dx y
        = \gamma \ell \int_{-K\frac{1}{2}}^{K\frac{1}{2}} w\bra*{\frac{y}{\ell}} e^{-i k y} \dx y
        \\&= \gamma \ell \int_{\frac{-1}{2}}^{\frac{1}{2}} w\bra*{\frac{y}{\ell}} e^{-i k y} \dx y
        \\ &= \widehat{W}_{\gamma,\ell}(Kk).
    \end{align}
    In particular, these calculations show that multi-cluster stationary states exist. However, these states are unstable as can be seen from the free energy landscape in~\cref{fig:free-energy-landscape}.
\end{remark}

\section{Discussion and outlook}
\label{sec:outlook}
In this paper, the dynamics of weakly interacting diffusions in an attractive localized interaction potential was studied. It was shown, through a combination of heuristics, rigorous analysis, and careful numerical experiments, that the system exhibits very rich dynamical behaviour that the mean-field description cannot fully capture. In particular, we identified several timescales and dynamical phenomena: formation of clusters, merging/coalescence, mass transfer between clusters, and microscopic reversibility, which we summarize as follows:
\begin{itemize}
    \item \textbf{Initial clustering:} The timescale $\tcluster$ in~\eqref{eq:initial-clustering-time} for the onset of clustering is determined solely by the activation of the dominant unstable mode of the linearization of the mean-field PDE~\eqref{eq:linearized-PDE}.
          As shown in~\cref{ssec:Initial:GPY}, in the case of the mean-field PDE and for a generic initial datum $\rho_0 \neq \rhounif$, the clustering time $\tcluster$ is of order one. The same applies to the particle system when its initial positions are sampled i.i.d.\ from a generic probability measure $\rho_0 \neq \rhounif$. However, if the initial positions of the particles are sampled i.i.d.\ from the uniform measure $\rhounif$, then the timescale for clustering is of order $\tcluster\asymp \log N$.
    \item \textbf{Coalescence:} The merging of clusters only happens in the particle system on the timescale $\tcoalesc$ of order $N$ as argued in~\cref{ssec:model:Garnier}.
    \item \textbf{Dynamic metastability:} Each cluster of macroscopic mass $m\in (\mcrit,1]$ is metastable on a timescale $\tmetastab \asymp e^{\gamma \ell \Delta m}$ independent of $N$ as implied by the proposed extended model in~\cref{ssec:model:MassExchange}.
    \item \textbf{Microscopic reversibility:} The finite particle system~\eqref{eq:microscopic-system} is ergodic due to the presence of Brownian motion for each particle. However, the presence of a discontinuous phase transition (see~\cref{prop:discontinuous}) implies that the free energy~\eqref{eq:def:free_energy} has a critical point. Since the clustered state is the global minimiser, its energy barrier $\Delta_\cF>0$ to this critical point determines the timescale of microscopic reversibility $\tmicrorev \asymp e^{N\Delta_{\mathcal F}}$ (see~\cite[Section 5]{GvalaniSchlichting2020}).
\end{itemize}
Consequently, we obtain the following behaviour for the particle model~\eqref{eq:microscopic-system}: As long as $\log N \ll \gamma \ell \Delta $, the movements of clusters~\eqref{eq:joint-model:X} and~\eqref{eq:joint-model-jumps:X} dominate the mass exchange~\eqref{eq:joint-model:m} and~\eqref{eq:joint-model-jumps:m}. In this case, the clusters will merge before the mass exchange has a significant effect. On the contrary, for $\log N \gg \gamma \ell \Delta$, the mass exchange dominates the cluster movements.

The identification of the timescales obtained in this paper raises several interesting questions that we plan to address in future work. The rigorous proof of dynamical metastability of the mean-field PDE, in the sense of Otto--Reznikoff, is still open. Furthermore, the rigorous justification of the limit to the massive Arratia flow in the appropriate scaling regime is a challenging question. A precise form of the conjecture can be found in~\cref{sec:ModArratia}. Finally, the rigorous analysis of similar questions for the kinetic Langevin dynamics, in particular in the weak friction regime, is also of interest.

\appendix

\section{Convergence towards the Modified
  Massive Arratia flow} \label{sec:ModArratia}

As explained above, in the regime $\gamma\gg 1$ and $\ell \ll 1$, once clusters have arisen, the system~\eqref{eq:microscopic-system} behaves essentially as a system of coalescing heavy Brownian motions. It is therefore natural to expect that appropriate rescalings of~\eqref{eq:microscopic-system} converge in the large particle limit to the Massive Modified Arratia Flow~(MMA)~\cite{KonRenesse_2019}, which is a random element $\{y(u,t),\ u\in[0,1],\ t\in[0,T]\}$ in the Skorohod space $D([0,1],C[0,T])$ characterized by the following set of axioms.

\begin{theorem}[Massive Arratia flow \protect{\cite[Theorem~1.1]{Konarovskyi2017}}]
    \label{thm:arratia-flow-axioms}
    There exists a random element $\{y(u,t),\ u\in[0,1],\ t\in[0,T]\}$ in the Skorohod space $D([0,1],C[0,T])$ such that
    \begin{enumerate}[label=(C\arabic*)]
        \item for all $u\in[0,1]$, the process $y(u,\cdot)$ is a continuous square integrable martingale with respect to the filtration
              \begin{equation}\label{f_filtration_y}
                  \mathcal{F}_t=\sigma(y(u,s),\ u\in[0,1],\ s\leq t),\quad t\in[0,T];
              \end{equation}

        \item for all $u\in[0,1]$, $y(u,0)=u$;

        \item for all $u<v$ from $[0,1]$ and $t\in[0,T]$, $y(u,t)\leq y(v,t)$;

        \item for all $u\in[0,1]$, the quadratic variation has the form
              $
                  \langle y(u,\cdot)\rangle_t=\int_0^t\frac{ds}{m(u,s)},
              $
              where $m(u,t)=\mathcal{L}^1 \{v:\ \exists s\leq t\ y(v,s)=y(u,s)\}$, $t\in[0,T]$;

        \item for all $u,v\in[0,1]$ and $t\in[0,T]$,
              $
                  \langle y(u,\cdot),y(v,\cdot)\rangle_{t\wedge\tau_{u,v}}=0,
              $
              where $\tau_{u,v}=\inf\{t:\ y(u,t)=y(v,t)\}\wedge T$.
    \end{enumerate}
\end{theorem}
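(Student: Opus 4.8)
The plan is to realize $y$ as a scaling limit of finite systems of coalescing, mass-carrying Brownian particles, in the spirit of~\cite{Konarovskyi2017}. For each $n \in \N$ I would place $n$ particles at the positions $k/n$, $k = 1, \ldots, n$, give each one mass $1/n$, and let it perform an independent Brownian motion whose diffusivity equals the reciprocal of its current mass; whenever two particles meet, they merge into a single particle carrying the sum of the masses, whose position then continues as the mass-weighted average of the two incoming trajectories, so that its quadratic variation accumulates at rate equal to the reciprocal of the new mass. This produces a process $y^n$ with values in $D([0,1], C[0,T])$ via: $y^n(u,\cdot)$ is the position of the particle with initial label $\lceil n u\rceil / n$. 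By construction, for every $n$: $y^n(\cdot,t)$ is nondecreasing in $u$, $y^n(u,0) = \lceil n u\rceil/n \to u$, each coordinate $y^n(u,\cdot)$ is a continuous square-integrable martingale, and the martingale parts of $y^n(u,\cdot)$ and $y^n(v,\cdot)$ are orthogonal up to the first time $\tau^n_{u,v}$ the two coordinates coincide, because until then they are driven by disjoint subfamilies of independent Brownian motions. Thus (C1)--(C3) and (C5) hold at the discrete level, while (C4) holds with the counting mass $m^n(u,s) = \#\{k : y^n(k/n,s) = y^n(u,s)\}/n$ in place of $m(u,s)$.

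Next I would establish tightness of the laws of $(y^n)_n$ on $D([0,1], C[0,T])$. Tightness in the $u$-variable is essentially a consequence of the monotonicity of $u \mapsto y^n(u,t)$ (Helly-type compactness) together with a uniform second-moment bound $\sup_n \E\int_0^1 |y^n(u,t) - u|^2\,du < \infty$; the latter comes from an It\^o computation for $s \mapsto \|y^n(\cdot,s)\|_{L^2}^2$, whose drift equals the number of clusters surviving at time $s$, combined with the a priori estimate that this number is $O(s^{-1/3})$ — the self-limiting, Ostwald-ripening-type coarsening by which heavy clusters diffuse slowly — so that $\int_0^t(\text{number of clusters at }s)\,ds$ stays bounded uniformly in $n$. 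Tightness in the $t$-variable, coordinatewise, follows from Aldous's criterion applied to the martingale $y^n(u,\cdot)$, whose quadratic variation $\langle y^n(u,\cdot)\rangle_t = \int_0^t m^n(u,s)^{-1}\,ds$ is controlled uniformly in $n$, the potential singularity at $s = 0$ being integrable because a label coalesces with a macroscopic family of neighbours almost immediately. By Prokhorov's theorem and Skorohod representation I would pass to a subsequential limit $y$ along which convergence holds almost surely; (C2) and (C3) survive trivially, the martingale property (C1) survives thanks to the uniform integrability from the second-moment bounds, and (C5) survives because $\tau^n_{u,v} \to \tau_{u,v}$ while the cross-variations vanish identically before coalescence.

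The step I expect to be the main obstacle is (C4): passing to the limit in $\langle y^n(u,\cdot)\rangle_t = \int_0^t m^n(u,s)^{-1}\,ds$ and identifying $\lim_n m^n(u,s)$ with the Lebesgue mass $m(u,s) = \mathcal{L}^1\{v : \exists r \le s,\ y(v,r) = y(u,r)\}$. The difficulty is two-sided: one must rule out that in the limit a positive-measure family of labels that ought to have coalesced remains separated (loss of mass), and, conversely, that labels stick together spuriously (excess mass). Establishing both requires a careful comparison of the coalescence structure of the $n$-particle systems across resolutions — in particular, tracking how the mass, hence the diffusivity, of each cluster depends on its entire coalescence history — for instance through couplings of the systems at levels $n$ and $2n$. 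One additionally needs $m(u,s) > 0$ for $s > 0$ together with integrability of $s \mapsto m(u,s)^{-1}$ near $0$, both of which follow from a quantitative lower bound on the coalesced mass near $s = 0$ transported to the limit. Once (C4) is secured, existence is proved; uniqueness in law — not asserted in the statement but customarily obtained alongside — follows from well-posedness of the associated martingale problem, which also removes the need to pass to a subsequence.
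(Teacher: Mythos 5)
The paper does not prove this statement: it is quoted verbatim as \cite[Theorem~1.1]{Konarovskyi2017} and used only to formulate the conjecture in~\cref{sec:ModArratia}, so there is no ``paper's own proof'' to compare against. Your sketch is essentially a summary of the construction in the cited reference (see also~\cite{KonRenesse_2019}): approximate by $n$ coalescing heavy Brownian particles whose diffusivity is the reciprocal of the carried mass, verify the axioms (C1)--(C5) at the discrete level with the counting mass in place of $\mathcal{L}^1$, establish tightness in $D([0,1],C[0,T])$ using monotonicity in $u$ and a martingale/Aldous argument in $t$, and pass to the limit. You also correctly single out (C4) as the genuinely hard step, where one must prevent both loss and spurious gain of coalesced mass in the limit, and you correctly flag the integrability of $s\mapsto m(u,s)^{-1}$ near $s=0$ as a key technical issue.

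Two caveats on the details: the claimed coarsening rate $O(s^{-1/3})$ for the number of surviving clusters is asserted rather than derived, and it is precisely the kind of quantitative a priori bound that the reference has to work hard to obtain; and the step ``$\tau^n_{u,v}\to\tau_{u,v}$'' is not automatic under Skorohod-coupled a.s.\ convergence of the paths (hitting times need not be continuous functionals), so (C5) also requires an argument. These are real gaps in your sketch as written, but they do not misidentify the strategy. Since the theorem is cited, not proved, in this paper, I would suggest either deferring entirely to~\cite{Konarovskyi2017} or, if you want to include a sketch for the reader's benefit, flagging explicitly which estimates are imported from the reference rather than proved.
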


Based on~\eqref{eq:microscopic-system}, we define the random variables $Z^N_{\gamma,\ell}$ with values in $\R^{[0,1]\times [0,\infty)}$ by
\begin{align}
    Z^N_{\gamma, \ell}(u,t) := X^{\lceil N u\rceil}_{t}.
\end{align}
Comparing~\eqref{eq:joint-model:X} with the axioms for the MMA~\cref{thm:arratia-flow-axioms} and with the construction of the MMA by means of the finite particle system from~\cite[Section 2]{Konarovskyi2017}, we see that we have to rescale the noise in~\eqref{eq:microscopic-system} by a factor of $\frac{\sqrt{N}}{\sqrt{2}}$, or equivalently, rescale time by a factor of $\frac{N}{2}$. Having this in mind, we formulate the following conjecture:
\begin{conjecture*}
    Let $(\gamma_n)$, $(\ell_n)$ be sequences in $(0,\infty)$ and $(N_n)$ be a sequence in $N^+$ such that
    \begin{align}
        \label{eq:Scaling:MMA}
        N_n \to \infty, &  & \ell_n \downarrow 0, &  & \frac{\gamma_n \ell_n}{\log N_n} \gg 1.
    \end{align}
    Then we have
    \begin{align}
        Z^{N_n}_{\gamma_n,\ell_n}\bra*{u,\tfrac{N_n t}{2}} \to y(u,t) \qquad \text{as } n \to \infty
    \end{align}
    in a suitable sense.
\end{conjecture*}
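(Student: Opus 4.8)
The plan is to follow the classical scheme for convergence to coalescing flows --- tightness, identification of subsequential limits through the characterising axioms (C1)--(C5), and uniqueness --- feeding in at the particle level the reduced cluster description of \cref{sec:ReducedModel}. Write $\widetilde Z^n(u,t) := Z^{N_n}_{\gamma_n,\ell_n}(u, N_n t/2)$ and fix a horizon $[0,T]$. The scaling \eqref{eq:Scaling:MMA} is precisely what makes this feasible: after the time rescaling by $N_n/2$, the coalescence timescale $\tcoalesc \asymp N_n$ is the one that survives, the initial-clustering timescale ($\asymp \log N_n$, see \cref{ssec:Initial:GPY}) collapses to zero, and the mass-exchange timescale $e^{\gamma_n \ell_n \Delta m}$ diverges faster than $N_n$ for cluster masses bounded away from $\mcrit$, so mass leakage should be negligible. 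As a first step I would make the initial clustering quantitative: produce $t_n$ with $t_n/N_n \to 0$ such that, with probability tending to one, $\mathbf X_{t_n}$ is $o(1)$-close to a multi-cluster state of the form \eqref{eq:multi-cluster-state} (each particle within $O(\ell_n)$ of one of $\asymp 1/\ell_n$ cluster centres, cluster masses equal to the block sizes divided by $N_n$), combining the linear-instability analysis of \cref{ssec:Initial:GPY} with a Gronwall estimate on the pre-rescaling timescale $t_n = o(N_n)$. Since blockwise centres of mass are exact martingales (evenness of $w$), $u \mapsto X^{\lceil N_n u\rceil}_{t_n}$ remains $o(1)$-close to the identity, which will give axiom (C2).

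\textbf{Reduction to the coalescing time-changed Brownian flow.} The core of the argument is to show that on $[0, N_n T/2]$ the true particle system \eqref{eq:microscopic-system} stays close --- in a sense strong enough to pass to cluster centres --- to the reduced model of \cref{ssec:model:Garnier}: independent Brownian cluster centres with diffusivity $2/(N_n m^{(j)})$, coalescing essentially instantaneously when within $\asymp s_w\ell_n$, with total mass conserved through merging. Three quantitative inputs are needed, all \emph{uniform over the diffusive time window}. (i) An Ornstein--Uhlenbeck approximation for the within-cluster fluctuations that is \emph{propagated} in time, not merely used instantaneously, guaranteeing that every cluster keeps width $\sigma_j \asymp \sqrt{\ell_n/(\gamma_n m^{(j)})} \ll \ell_n$, so that $X^{(j)}$ genuinely diffuses with the stated coefficient and a tagged particle feels the effective potential \eqref{eq:V-eff-approx}. (ii) Control of the merging event: once two centres enter the interaction range they contract exponentially (the SDE in \cref{ssec:model:Garnier}) and merge within time $o(N_n)$, off an event of probability $O(e^{-c\gamma_n\ell_n})$. (iii) Negligibility of mass leakage: by the Eyring--Kramers estimate \eqref{eq:clusters-masses:Eyring-Kramers} the per-particle exit rate from cluster $j$ is $\asymp e^{-\gamma_n\ell_n\Delta m^{(j)}}$, so the expected number of leakage events up to time $N_nT/2$ is $\asymp N_n e^{-\gamma_n\ell_n\Delta m^{(j)}} \to 0$ provided $m^{(j)}$ stays bounded below, which $\gamma_n\ell_n \gg \log N_n$ ensures; additionally one must show that clusters whose mass has dropped to $O(\mcrit)$ --- where the OU picture breaks down (\cref{ssec:model:limitations}) --- are transient, being absorbed by a neighbour or dissolving before affecting the limiting law.

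\textbf{Tightness, identification, uniqueness.} Granted the reduction, tightness of $(\widetilde Z^n)$ in $D([0,1], C[0,T])$ follows as in the finite-particle construction of the MMA in \cite{Konarovskyi2017}: the cyclic ordering of cluster centres on $\T$ is preserved (clusters coalesce, never cross), which after lifting to $\R$ controls the $D([0,1],\cdot)$-modulus in $u$, while uniform $L^2$-martingale moment bounds together with the a priori estimate $\langle \widetilde Z^n(u,\cdot)\rangle_t \le \int_0^t m(u,s)^{-1}\,ds$ (finite because $m(u,s)$ increases, with the usual care near $t=0$, where $m \asymp \ell_n$ already) give equicontinuity in $t$. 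For a subsequential limit $y$ one verifies (C1)--(C5): (C2) is the first step; (C3) passes to the limit; (C1) holds because each $\widetilde Z^n(u,\cdot)$ is a uniformly $L^2$-bounded martingale and, by standard arguments, $y(u,\cdot)$ is then a martingale for the natural filtration \eqref{f_filtration_y}; (C5) holds because distinct centres are driven by independent Brownian motions until they meet, so the cross-variation vanishes on $[0, t\wedge\tau_{u,v}]$; and (C4) --- the decisive one --- follows since the diffusivity $2/(N_n m^{(j)})$ becomes $1/m^{(j)}$ after the $N_n/2$ time change, while input (iii) above (conservation of mass through merging) identifies the limit of $m^{(j)}$ for the cluster containing label $u$ with $\mathcal L^1\{v : \exists s \le t,\ y(v,s) = y(u,s)\}$. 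By \cref{thm:arratia-flow-axioms} these axioms characterise the MMA uniquely in law, so all subsequential limits agree and the full sequence converges.

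\textbf{Main obstacle.} The real work is the reduction step, and within it ingredients (i) and (iii): propagating the Ornstein--Uhlenbeck approximation of the intra-cluster dynamics over the \emph{entire} diffusive timescale (rather than exploiting it locally, as in the heuristic derivation of \cref{ssec:derivation}), and controlling mass leakage honestly for clusters near the critical mass $\mcrit \asymp 1/(\gamma_n \ell_n w''(0))$, where the Gaussian approximation underlying \eqref{eq:clusters-masses:Eyring-Kramers} degenerates. Making (iii) rigorous may well require strengthening the scaling \eqref{eq:Scaling:MMA} --- e.g.\ imposing $\gamma_n \ell_n^2 \to \infty$, which keeps the initial masses $\asymp \ell_n$ comfortably supercritical --- or a separate soft argument that subcritical clusters are negligible. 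A lower-order, bookkeeping issue is reconciling the torus geometry of \eqref{eq:microscopic-system} with the $\R$-valued flow of \cref{thm:arratia-flow-axioms}; this is presumably what the phrase ``in a suitable sense'' is meant to absorb.
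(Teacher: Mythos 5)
The statement you are addressing is explicitly labelled a \emph{conjecture} in the paper's appendix; the authors offer no proof, only a two-sentence heuristic justification of the scaling conditions in~\eqref{eq:Scaling:MMA} (namely: $\gamma_n\ell_n\to\infty$ keeps clusters stable, $\ell_n\downarrow 0$ forces merging only at coincident positions, and $\gamma_n\ell_n/\log N_n\gg 1$ makes coalescence dominate mass exchange). There is therefore no proof in the paper to compare against, and your text is not a proof either --- it is a research programme, as you yourself acknowledge by flagging the ``main obstacle'' and the possible need to strengthen the scaling hypothesis.

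That said, your outline is a reasonable and well-organised sketch of what such a proof would have to look like, and it correctly tracks the authors' own heuristics: tightness and identification via the axioms (C1)--(C5) of~\cref{thm:arratia-flow-axioms}, following Konarovskyi's finite-particle construction, with the nontrivial content concentrated in a ``reduction to the coalescing Brownian flow'' step that would make~\cref{ssec:model:Garnier} rigorous uniformly over the diffusive time window. You correctly identify the two genuine gaps --- propagating the Ornstein--Uhlenbeck/Gaussian cluster shape over times of order $N_n$ rather than using it instantaneously, and controlling mass leakage for clusters near $\mcrit\asymp 1/(\gamma_n\ell_n w''(0))$ where the Eyring--Kramers approximation degenerates --- and these are exactly the issues the paper does not resolve. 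Your observation that an additional hypothesis such as $\gamma_n\ell_n^2\to\infty$ may be needed to keep the initial cluster masses ($\asymp\ell_n$ by~\eqref{eq:number-of-clusters:HK}) comfortably supercritical is a useful refinement not present in the paper. But none of this is a proof: every one of steps (i)--(iii) in your reduction is stated as something that ``would have to be shown'' rather than shown, and the tightness/identification paragraph rests entirely on the unproved reduction. Treat the proposal as a roadmap, not as a resolution of the conjecture.
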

The condition $\gamma_n\ell_n\to\infty$ in~\eqref{eq:Scaling:MMA} ensures that clusters are asymptotically stable as  $n\to\infty$, while
$\ell\downarrow 0$ ensures that clusters merge in the limit only when they have exactly the same position. Finally, we want the coalescence dynamics to dominate over the mass exchange in the model~\eqref{eq:joint-model} which explains the condition $\frac{\gamma_n\ell_n}{\log N_n} \gg 1$.

\begin{remark}
    The scaling in~\eqref{eq:Scaling:MMA} is related to the moderately interacting diffusions considered by Oelschläger~\cite{oelschlager1985law}, but his regime is $\gamma_n\ell_n\asymp 1$, while in our regime $\gamma_n$ grows much faster.
    In the direction of more localized attractive scalings, the work~\cite{ChenGoettlichKnapp2018} provides a first result. The authors consider the regime, in which the interaction becomes an attractive Dirac $W_{\gamma_n,\ell_n} \rightharpoonup - \delta_0$. The derived mean-field limit has, besides the linear diffusion, a quadratic \emph{backward} porous medium. The system is only well-posed for initial data close to the uniform state, and larger states show a blow-up like a reverse Barenblatt solution. In this sense, the mean-field limit can show the initial clustering, but the dynamics of clusters cannot be observed in the mean-field limit, in agreement with our result.
\end{remark}

\section{Proof of Proposition~\ref{prop:prop-laplace-hitting-time}}
\label{app:proof-prop-laplace-hitting-time}
This section presents the proof of~\cref{prop:prop-laplace-hitting-time}. In the following, we use the notation $\calW_{\gamma,\ell}(y) := \gamma \ell w\bra*{\frac{y}{\ell}}$ for all $y\in\R$, which is different from $W_{\gamma,\ell}$ as defined in~\eqref{eq:def:rescaledW} since we do not periodize $\calW_{\gamma,\ell}$, working here on $\R$ instead of $\T$. We also use the notation $\rho_k(\dummy) := m^{(k)} g(\dummy; X^{(k)}, \sigma_k^2)$ for the $k$-th cluster as well as
$L_k:= X^{(k)} - s_w\ell$ and $R_k := X^{(k)} + s_w\ell$ for $k=1,\dots, K$ for the left and right boundary of the $k$-th valley, respectively. For all of the following, we assume the assumptions of \cref{prop:prop-laplace-hitting-time} to hold.

For the proof of~\cref{prop:prop-laplace-hitting-time}, we need several auxiliary results. Let's first prove that outside of $[L_k, R_k]$, the convolution $\calW_{\gamma,\ell}\ast \rho_k$ converges to zero as $\gamma \to \infty$.

\begin{lemma}
    \label{lemma:W-convolution-outside-valley-bound}
    Fix $k\in\{1,\dots, K\}$. For all $x\in\R\setminus[L_k, R_k]$, we have that
    \begin{align}
        \label{eq:W-convolution-outside-valley-bound}
        \abs*{\bra*{\calW_{\gamma, \ell}\ast \rho_k}(x)}  \le \gamma \ell \Delta m^{(k)} \exp\bra*{-\frac{ \eps^2w''(0)m^{(k)}}{2\ell }\gamma},
    \end{align}
    where $\eps:= \abs*{x-X^{(k)}} - s_w \ell >0$.
\end{lemma}

\begin{proof}
    We have the standard bound for Gaussian tails
    \begin{align}
        \label{eq:gaussian-tail-bound}
        \P\pra*{ \abs{Z} \ge C} \le \exp\bra*{-\frac{C^2}{2\sigma^2}} \quad \text{whenever } Z \sim \calN(0,\sigma^2) \text{ and } C>0.
    \end{align}
    This follows from the moment generating function of a Gaussian and Markov's inequality.
    Therefore, we obtain
    \begin{align}
               & \abs*{\bra*{\calW_{\gamma, \ell}\ast \rho_k}(x)}  =\abs*{\int_{-s_w\ell}^{s_w\ell} \calW_{\gamma,\ell}(y) \rho_k(x-y)\dx y}                                             \\
        \le{}  & \gamma \ell \Delta \int_{-s_w\ell}^{s_w \ell} \rho_k(x-y) \dx y =\gamma \ell \Delta m^{(k)} \P_{Z\sim \calN(0, \sigma_k^2)}\pra*{\abs*{Z-\bra*{x-X^{(k)}}} \le s_w \ell} \\
        \le {} & \gamma\ell\Delta m^{(k)}
        \P_{Z \sim \calN(0, \sigma_k^2)}\pra[\big]{\abs{Z} \ge \eps}  \le \gamma\ell\Delta m^{(k)} \exp\bra*{-\frac{\eps^2}{2\sigma_k^2}}                                                 \\
        ={}    & \gamma \ell \Delta m^{(k)} \exp\bra*{-\frac{\eps^2w''(0)m^{(k)} }{2\ell }\gamma},
    \end{align}
    which shows the claim.
\end{proof}

Next, we provide a bound on the error of the approximation~\eqref{eq:V-eff-approx} within the interval $(X^{(k)}-r_w\ell, X^{(k)}+r_w\ell)$. Although the error includes a quartic term in $x-X^{(k)}$ that grows with $\gamma$, this does not affect the application of Laplace's method later, as only the leading quadratic approximation by $f_k$ is relevant for the asymptotics.
\begin{lemma}
    \label{lemma:V-eff-parabolic-approximation-rigorous}
    Fix $k\in\{1,\dots, K\}$ and define the parabola $f_k(x):= -\gamma \ell \Delta m^{(k)} + \frac{(x - X^{(k)})^2}{2\sigma_k^2} + \frac{1}{2}$.
    Then, there exists a constant 
    \begin{align}
        C_{\rm par}=C_{\rm par}(\ell, w, X^{(1)}, \dots, X^{(K)}, m^{(1)}, \dots,m^{(K)})<\infty,
    \end{align} such that for all $\gamma >0$ and for all $x\in (X^{(k)}- r_w\ell, X^{(k)}+r_w\ell)$, we have that
    \begin{multline}
        \label{eq:V-eff-parabolic-approximation-rigorous}
        \abs*{ V_{\rm eff}(x) - f_k(x)}
        \le C_{\rm par} \gamma \exp\bra*{ - \frac{\eps^2 w''(0) m^{(k)}}{4 \ell } \gamma }
        \\ +
        C_{\rm par} \cdot L''' \cdot \bra*{\frac{1}{\gamma} + \gamma \bra*{x- x^{(k)}}^4},
    \end{multline}
    where $\eps := r_w\ell - \abs*{x-X^{(k)}}>0$. Here, $L'''<\infty$ denotes the Lipschitz constant of $w'''$ on $[-r_w, r_w]$.
\end{lemma}
\begin{proof}
    Assume that $\eps:=r_w\ell - \abs{x- X^{(k)}}>0$. We have that
    \begin{align}
        \calE^{(k)}(x) & := \abs*{ \bra*{\calW_{\gamma,\ell}\ast \rho_k}(x) -\bra*{-\gamma \ell \Delta m^{(k)} + \frac{(x - X^{(k)})^2}{2\sigma_k^2} + \frac{1}{2}}}                \\
                       & = \abs*{\int_\R \bra*{\calW_{\gamma,\ell}(y) + \gamma \ell \Delta - \frac{\bra*{x-X^{(k)}}^2}{2\sigma_k^2 m^{(k)}}-\frac{1}{2m^{(k)}}} \rho_k(x-y) \dx y}.
        \label{eq:V-eff-approx-error}
    \end{align}
    Since $w$ is $C^3$ on $[-r_w, r_w]$ and even, for all $y\in [-r_w\ell, r_w\ell]$, there exists $\xi_y \in [-\abs{y}, \abs{y}]$ such that
    \begin{align}
        \calW_{\gamma,\ell}(y) = - \gamma \ell \Delta + \frac{\gamma w''(0)}{2\ell} y^2 + \frac{\gamma}{6\ell^2}w'''\bra*{\frac{\xi_y}{\ell}} y^3.
    \end{align}
    In particular, we have
    \begin{align}
        \int_{-r_w\ell}^{r_w\ell} \calW_{\gamma,\ell}(y)\rho_k(x-y) \dx y =
        \int_{-r_w\ell}^{r_w\ell} \bra*{- \gamma \ell \Delta + \frac{\gamma w''(0)}{2\ell} y^2} \rho_k(x-y) \dx y + \calE_1,
        \label{eq:V-eff-approx-error-1}
    \end{align}
    where we introduced the error term
    \begin{align}
        \calE_1 & := \int_{-r_w\ell}^{r_w\ell} \frac{\gamma}{6\ell^2}w'''\bra*{\frac{\xi_y}{\ell}} y^3 \rho_k(x-y) \dx y.
    \end{align}
    Moreover, we have from \eqref{eq:V-eff-approx-error-1} that
    \begin{align}
         & \int_\R \calW_{\gamma,\ell}(y)\rho_k(x-y) \dx y
        =  \int_\R \bra*{- \gamma \ell \Delta + \frac{\gamma w''(0)}{2\ell}  y^2} \rho_k(x-y) \dx y + \calE_1 + \calE_2,
    \end{align}
    where
    \begin{align}
        \abs*{\calE_2} & \le \int_{\R\setminus [-r_w\ell, r_w\ell]} \bra*{\gamma\ell \Delta m^{(k)} + \frac{y^2}{2\sigma_k^2}} \frac{\rho_k(x-y)}{m^{(k)}} \dx y.
    \end{align}
    Together with \eqref{eq:V-eff-approx-error}, this gives
    \begin{align}
        \calE^{(k)}(x) & \le \abs*{\int_\R\bra*{
        \frac{y^2 - \bra*{x-X^{(k)}}^2}{2\sigma_k^2 } - \frac{1}{2}} \frac{\rho_k(x-y)}{m^{(k)}} \dx y
        } + \abs{\calE_1} + \abs*{\calE_2}.
        \label{eq:V-eff-approx-error-2}
    \end{align}
    Observe that the first term on the right-hand side of~\eqref{eq:V-eff-approx-error-2} is zero since $\rho_k(x-\dummy)/m^{(k)}$ is a Gaussian with mean $x-X^{(k)}$ and variance $\sigma_k^2 $.

    It remains to control the error terms $\calE_1$ and $\calE_2$.
    Let $L'''$ be the Lipschitz constant of $w'''$ on $[-r_w, r_w]$. From $w'''(0)=0$ we have then
    \begin{align}
        \abs{\calE_1} & \le
        \frac{\gamma L'''}{6\ell^3}
        \int_{-r_w\ell}^{r_w\ell} {y}^4 \rho_k(x-y) \dx y
        \le \frac{\gamma L'''}{6\ell^3}\int_\R y^4 \rho_k(x-y)\dx y                                                                                                   \\
                      & \le \frac{4 \gamma L''' m^{(k)}}{3\ell^3} \int_\R \pra*{\bra*{y - \bra*{x-X^{(k)}}}^4 + \bra*{x-X^{(k)}}^4} \frac{\rho_k(x-y)}{m^{(k)}} \dx y
        \\&= \frac{4 \gamma L''' m^{(k)}}{3\ell^3}\bra*{3\sigma_k^4 + \bra*{x-X^{(k)}}^4}
        \\&= \frac{4 L'''}{(w''(0))^2 m^{(k)}\ell}\frac{1}{\gamma} + \frac{4 L''' m^{(k)}\gamma}{3\ell^3} \bra*{x-X^{(k)}}^4.
        \label{eq:V-eff-approx-error-calE-1}
    \end{align}
    Moreover, we have that
    \begin{align}
        \abs*{\calE_2} & \le
        \expect_{Z\sim \calN(x-X^{(k)}, \sigma_k^2)}
        \pra*{\bra*{
            \gamma\ell\Delta m^{(k)} + \frac{Z^2}{2\sigma_k^2}
        } \mathbbm{1}_{\abs{Z}\ge r_w\ell}}
        \\& \le  \expect_{Z\sim \calN(x-X^{(k)}, \sigma_k^2)}
        \pra*{\bra*{
            \gamma\ell\Delta m^{(k)} + \frac{Z^2}{2\sigma_k^2}
        } \mathbbm{1}_{\abs{Z-(x-X^{(k)})}\ge \eps}}
        \\ &=
        \expect_{Z\sim \calN(0, \sigma_k^2)}
        \pra*{\bra*{
        \gamma\ell\Delta m^{(k)} + \frac{\bra*{x-X^{(k)}}^2 + Z^2}{2\sigma_k^2}
        } \mathbbm{1}_{\abs{Z}\ge \eps}}
    \end{align}
    Using \eqref{eq:gaussian-tail-bound} to bound
    \begin{align}
        \expect_{Z\sim\calN(0,\sigma_k^2)}\pra*{Z^2 \mathbbm{1}_{\abs{Z}\ge \eps}} & \le \bra*{\expect_{Z\sim\calN(0,\sigma_k^2)}\pra*{Z^4}}^{1/2} \P\pra*{\abs{Z}\ge \eps}^{1/2} = \sqrt{3}\sigma_k^2 \exp\bra*{-\frac{\eps^2}{4\sigma_k^2}},
    \end{align}
    we obtain
    \begin{align}
        \abs*{\calE_2} & \le \bra*{\gamma\ell \Delta m^{(k)} + \frac{\bra*{x-X^{(k)}}^2}{2\sigma_k^2}} \exp\bra*{-\frac{\eps^2}{2\sigma_k^2}} + \frac{\sqrt{3}}{2} \exp\bra*{-\frac{\eps^2}{4\sigma_k^2}}.
        \label{eq:V-eff-approx-error-calE-2}
    \end{align}
    Together,~\eqref{eq:V-eff-approx-error-2},~\eqref{eq:V-eff-approx-error-calE-1} and~\eqref{eq:V-eff-approx-error-calE-2} give
    \begin{multline}
        \abs*{ \bra*{\calW_{\gamma,\ell}\ast \rho_k}(x) -f_k(x)}
        \\\le C_{\rm par} \gamma \exp\bra*{ - \frac{\eps^2 w''(0) m^{(k)}}{4 \ell } \gamma }
        +       C_{\rm par} \cdot L''' \cdot \bra*{\frac{1}{\gamma} + \gamma \bra*{x- X^{(k)}}^4}
    \end{multline}
    for all $x\in (X^{(k)}-r_w\ell, X^{(k)}+r_w\ell)$. Here, the constant $C_{\rm par}<\infty$ depends on $\ell$, $w$ and on $m^{(1)},\dots,m^{(K)}$ as well as $X^{(1)}, \dots, X^{(K)}$, but not on $\gamma$. By applying~\eqref{eq:W-convolution-outside-valley-bound} to the other clusters $1,\dots, k-1, k+1,\dots,K$ and by enlarging the constant $C_{\rm par}<\infty$ (which still does not depend on $\gamma$), we obtain~\eqref{eq:V-eff-parabolic-approximation-rigorous}.
\end{proof}

The next lemma shows that $\calW_{\gamma,\ell}\ast \rho_k$ has a global minimum at $X^{(k)}$.
\begin{lemma}
    \label{lemma:W-convolution-monotonicity}
    For all $k\in\{1,\dots, K\}$, the function
    $\calW_{\gamma,\ell}\ast \rho_k:\R\to(-\infty,0]$ is strictly increasing for $x > X^{(k)}$ and strictly decreasing for $x < X^{(k)}$.
\end{lemma}
\begin{proof}
    For all $x\in\R$, we have that
    \begin{align}
             \bra*{\calW_{\gamma,\ell}\ast \rho_k}'(x)
        &= \int_{-s_w\ell}^{s_w\ell} \calW_{\gamma,\ell}(y) \rho_k'(x-y) \dx y                                              
        = \int_{-s_w\ell}^{s_w\ell} \calW_{\gamma,\ell}'(y) \rho_k(x-y) \dx y \\
          &= \int_{0}^{s_w \ell} \calW_{\gamma,\ell}'(y)  \rho_k(x-y) \dx y
        -\int_{0}^{s_w \ell} \calW_{\gamma,\ell}'(y) \rho_k(x+y) \dx y                                                                                    \\
         &= \int_{0}^{s_w \ell} \calW_{\gamma,\ell}'(y) \bra*{\rho_k(x-y)-\rho_k(x+y)} \dx y.
    \end{align}
    Note that $\calW_{\gamma,\ell}'(y) \ge 0$ for all $y\in[0,s_w\ell]$ and that $\rho_k(x-y)-\rho_k(x+y)$ is positive for $x > X^{(k)}$ and negative for $x < X^{(k)}$ which shows the claim.
\end{proof}

While \cref{lemma:V-eff-parabolic-approximation-rigorous} provides a good approximation near the minima of the effective potential, the next lemma establishes a coarser approximation which has the advantage that it holds globally for all $x \in \R$.

\begin{lemma}
    \label{lemma:V-eff-close-to-Wgl-sqrt-gamma}
    There exists a constant $C_{\rm coarse} = C_{\rm coarse}(\ell, w,K)<\infty$
    such that for all $x\in \R$, for all $\gamma>0$  and all $k\in\{1,\dots,K\}$ we have that
    \begin{align}
        \label{eq:V-eff-close-to-Wgl-sqrt-gamma-one-cluster}
        \abs*{\bra*{\calW_{\gamma,\ell}\ast \rho_k}(x) - m^{(k)} \calW_{\gamma,\ell}\bra*{x - X^{(k)}}} \le C_{\rm coarse} \sqrt{\gamma}
    \end{align}
    and
    \begin{align}
        \label{eq:V-eff-close-to-Wgl-sqrt-gamma}
        \abs*{V_{\rm eff}(x) - \sum_{k=1}^K m^{(k)} \calW_{\gamma,\ell}\bra*{x - X^{(k)}}} \le C_{\rm coarse} \sqrt{\gamma}.
    \end{align}
\end{lemma}

\begin{proof}
    Let $L<\infty$ be the Lipschitz constant of $w$. Then, we have that
    \begin{align}
              & \abs*{\bra*{\calW_{\gamma,\ell}\ast \rho_k}(x) - m^{(k)} \calW_{\gamma,\ell}\bra*{x - X^{(k)}}}
        \\={}& \abs*{\int_\R \bra*{\calW_{\gamma,\ell}(y) - \calW_{\gamma,\ell}\bra*{x - X^{(k)}}} \rho_k(x-y) \dx y}\\
        \le{} & L \gamma \int_\R \abs*{y - \bra*{x - X^{(k)}}} \rho_k(x-y) \dx y
        = L\gamma m^{(k)} \int_\R \abs{y} g(y; 0, \sigma^2_k) \d y
        \\={}&2L\sigma_k \gamma m^{(k)} \int_{[0,\infty)} y g(y; 0, 1) \d y = \sqrt{2 \pi^{-1}}L\sigma_k \gamma m^{(k)} =\frac{L \sqrt{2\ell}}{\sqrt{\pi w''(0) }} \sqrt{\gamma m^{(k)}},
    \end{align}
    where we used that $\sigma_k^2 = \frac{\ell}{w''(0)m^{(k)}\gamma}$. Using that $m^{(k)}< 1$, this proves \eqref{eq:V-eff-close-to-Wgl-sqrt-gamma-one-cluster}. By applying the triangle inequality and enlarging the constant $C_{\rm coarse}=C_{\rm coarse}(\ell, w, K)$ if necessary, we obtain~\eqref{eq:V-eff-close-to-Wgl-sqrt-gamma} as well.
\end{proof}

The following lemma bounds the first integral term on the right-hand side of~\eqref{eq:hitting-time-explicit}, showing that this term is of lower order than the right-hand side of~\eqref{eq:clusters-masses:Eyring-Kramers} as $\gamma \to\infty$.

\begin{lemma}
    \label{lemma:waiting-time-minor-term}
    There exist constants $c,C_{\rm minor}\in(0,\infty)$ depending on $\ell, w$, $X^{(1)}, \dots, X^{(K)}$ and $m^{(1)}, \dots, m^{(K)}$ such that for all $x\in [L_j, R_j]$ and large enough $\gamma$, we have the bound
    \begin{multline}
        \label{eq:lemma:waiting-time-minor-term}
        \int_a^x \int_a^y e^{V_{\rm eff}(y)- V_{\rm eff}(z)} \dx z \dx y
        \le C_{\rm minor} \exp\bra*{-c\sqrt{\gamma} + \gamma \ell \Delta m^{(j)}}
        \\ +C_{\rm minor}\exp\bra*{\gamma \ell \abs{w(\alpha)} m^{(j-1)} + 2C_{\rm coarse} \sqrt{\gamma}}.
    \end{multline}
\end{lemma}
Note that $\abs{w(\alpha)} m^{(j-1)} < \Delta m^{(j)}$ by the assumption~\eqref{eq:w-assumption:alpha-condition}. In particular, \cref{eq:lemma:waiting-time-minor-term} is of lower order than $\exp\bra*{\gamma \ell \Delta m^{(j)}}$ as $\gamma \to\infty$.
\begin{proof}
    Using that
    \begin{align}
        \set*{(y,z) \in \R^2 \st a \le z  \le y \le x} \subseteq \bra[\Big]{[a, R_j] \times [a, L_j]} \cup [L_j, R_j]^2,
    \end{align}
    we bound the integral as
    \begin{align}
         & \int_a^x \int_a^y e^{V_{\rm eff}(y)- V_{\rm eff}(z)} \dx z \dx y
        \\ \le{}& \int_a^{R_j} e^{V_{\rm eff}(y)}\dx y
        \int_a^{L_j} e^{- V_{\rm eff}(z)} \dx z
        + \int_{L_j}^{R_j} e^{V_{\rm eff}(y)}\dx y  \int_{L_j}^{R_j} e^{- V_{\rm eff}(z)} \dx z.
        \label{eq:waiting-time-minor-term}
    \end{align}
    To bound the first term on the right-hand side of \eqref{eq:waiting-time-minor-term}, we first use~\cref{lemma:W-convolution-monotonicity} to obtain for all $z\in [a, R_{j-1}]=[X^{(j-1)} + \alpha\ell, X^{(j-1)} + s_w\ell]$ that
\begin{align}
        \bra*{\calW_{\gamma,\ell}\ast \rho_{j-1}}(z) & \ge \bra*{\calW_{\gamma,\ell}\ast \rho_{j-1}}(X^{(j-1)} + \alpha\ell).
    \end{align}
    By~\cref{lemma:V-eff-close-to-Wgl-sqrt-gamma}, we have that
    \begin{align}
         \bra*{\calW_{\gamma,\ell}\ast \rho_{j-1}}(X^{(j-1)} + \alpha\ell) &\ge m^{(j-1)} \calW_{\gamma,\ell}(\alpha\ell) - C_{\rm coarse} \sqrt{\gamma}
         \\&= \gamma \ell w(\alpha) m^{(j-1)} - C_{\rm coarse} \sqrt{\gamma}.
    \end{align}
    Combining the last two inequalities, we obtain for all $z\in [a, R_{j-1}]$ that
    \begin{align}
        \int_a^{R_{j-1}} e^{-\bra*{\calW_{\gamma,\ell}\ast \rho_{j-1}}(z)} \dx z & \le s_w\ell \exp\bra*{\gamma \ell \abs{w(\alpha)} m^{(j-1)} + C_{\rm coarse} \sqrt{\gamma}}.
    \end{align}
    In order to show that a similar estimate holds with $\calW_{\gamma,\ell}$ replaced by $V_{\rm eff}$, we use that the sets $[L_k, R_k]$ are disjoint for $k=1,\dots, K$ by assumption. Thus, by~\cref{lemma:W-convolution-outside-valley-bound}, we have for all $z\in [a, R_{j-1}]$ and all $k\neq j-1$ that
    \begin{align}
        \abs*{\bra*{\calW_{\gamma,\ell}\ast \rho_k}(z)}
        & \le \gamma \ell \Delta \exp\bra*{ - \frac{\tilde{\eps}^2 w''(0) m_{\rm min}}{2\ell}} \gamma
    \end{align}
    where $\tilde{\eps} := \inf\set*{ \abs{x-X^{(k)}} - s_w\ell \st x \in [a, R^{j-1}], k\in\{1,\dots, K\}\setminus\{j-1\}}>0$ and $m_{\rm min} := \min_{k=1,\dots, K} m^{(k)}$.
    In particular, $V_{\rm eff} - \calW_{\gamma,\ell}\ast \rho_{j-1}$ converges to zero uniformly on $[a, R_{j-1}]$ as $\gamma \to\infty$ and therefore we obtain for large enough $\gamma$ that
    \begin{align}
        \label{eq:a-R-j-1-integral-bound}
        \int_a^{R_{j-1}} e^{-V_{\rm eff}(z)} \dx z & \le 2s_w\ell \exp\bra*{\gamma \ell \abs{w(\alpha)} m^{(j-1)} + C_{\rm coarse} \sqrt{\gamma}}.
    \end{align}
    Observe that
    \begin{align}
        \sum_{k=1}^K m^{(k)} \calW_{\gamma,\ell}\bra*{x - X^{(k)}} = 0 \quad \text{for all } x \in [R_{j-1}, L_j]
    \end{align}
    since $\supp \calW_{\gamma,\ell} = [-s_w\ell, s_w\ell]$, so \eqref{eq:V-eff-close-to-Wgl-sqrt-gamma} implies that $V_{\rm eff}(z) \ge - C_{\rm coarse} \sqrt{\gamma}$ for all $z\in[R_{j-1}, L_j]$.
    Together with \eqref{eq:a-R-j-1-integral-bound}, this gives for large enough $\gamma$ that
    \begin{multline}
        \int_a^{R_j} e^{V_{\rm eff}(y)}\dx y
        \int_a^{L_j} e^{- V_{\rm eff}(z)} \dx z \le (R_j -a)\bra*{\int_a^{R_{j-1}} e^{-V_{\rm eff}(z)} \dx z + \int_{R_{j-1}}^{L_j} e^{-V_{\rm eff}(z)} \dx z}
        \\\le \bra*{X^{(j)}-X^{(j-1)}+s_w\ell}^2 \exp\bra*{\gamma \ell \abs{w(\alpha)} m^{(j-1)} +C_{\rm coarse} \sqrt{\gamma}},
        \label{eq:a-R-j-integral-bound}
    \end{multline}
    where we used that $V_{\rm eff}\le 0 $ and that $R_j-a \le X^{(j)} - X^{(j-1)} + s_w\ell$ as well as $2s_w \ell + (L_j - R_{j-1}) \le X^{(j)} - X^{(j-1)}$.

    As for the second term in~\eqref{eq:waiting-time-minor-term}, the idea is that even though $\exp(-V_{\rm eff}(z))$ will give an exponential contribution of the order $\exp(\gamma \ell \Delta m^{(j)})$, the term $\exp(V_{\rm eff}(y))$ will super-compensate this exponential contribution as we will show in the following. From \eqref{eq:w-assumption:linear-upper-bound-around-sw} we obtain $\calW_{\gamma,\ell}(z) \le \gamma \tau\cdot (z- s_w \ell)$ for all $z \in [\delta_w\ell,s_w\ell]$.
    Therefore, we have
    \begin{align}
         & \bra*{\calW_{\gamma,\ell}\ast \rho_j}\bra*{X^{(j)}+s_w\ell}
        \le \int_{\delta_w\ell}^{s_w\ell} \calW_{\gamma,\ell}(z)
        \rho_j\bra*{X^{(j)}+s_w\ell -z} \dx z
        \\  \le{}& \gamma \tau m^{(j)}\int_{\delta_w\ell}^{s_w\ell}  (z- s_w \ell) g(s_w\ell -z; 0, \sigma_j^2) \dx z
        \\&= - \frac{\gamma \tau m^{(j)}}{2}\int_{-(s_w-\delta)\ell}^{(s_w-\delta)\ell}  \abs*{z- s_w \ell} g(s_w\ell -z; 0, \sigma_j^2) \dx z
    \end{align}
    For large $\gamma$, replacing the integration domain $[-(s_w-\delta)\ell, (s_w-\delta)\ell]$ by $\R$ will only change the integral by a term of order $\exp(-c\gamma)$ for some constant $c>0$. Therefore, since $\sigma_j^2=\ell/(\gamma w''(0) m^{(j)})$, there exists some constants $c>0$ (depending on $\ell, w, m^{(j)}$) such that for $\gamma$ large enough, we have
    \begin{align}
        \bra*{\calW_{\gamma,\ell}\ast \rho_j}\bra*{X^{(j)}+s_w\ell} \le -c \sqrt{\gamma}.
    \end{align}
    This shows that even though the effective potential will eventually converge to zero on $(R_j, L_{j+1})$ as $\gamma \to\infty$ by~\cref{eq:W-convolution-outside-valley-bound}, the error near $R_j$ will be of order $\sqrt{\gamma}$, see also~\cref{lemma:V-eff-close-to-Wgl-sqrt-gamma}.
    Similarly, we obtain that
    \begin{align}
        \bra*{\calW_{\gamma,\ell}\ast \rho_j}\bra*{X^{(j)}-s_w\ell} \le -c \sqrt{\gamma},
    \end{align}
    soby~\cref{lemma:W-convolution-monotonicity}, we therefore obtain
    \begin{align}
        \label{eq:Veff-slow-convergence-at-boundaries}
        \calW_{\gamma,\ell}\ast \rho_j(y) \le - c \sqrt{\gamma} \quad \text{for all } y \in [L_j, R_j].
    \end{align}
    Using that $\calW_{\gamma,\ell}\ge - \gamma\ell \Delta$, we can use~\eqref{eq:Veff-slow-convergence-at-boundaries} to obtain
    \begin{align}
        \int_{L_j}^{R_j}e^{\bra*{\calW_{\gamma,\ell}\ast \rho_j}(y)}\dx y  \int_{L_j}^{R_j} e^{-\bra*{\calW_{\gamma,\ell}\ast \rho_j}(z)} \dx z
        \le 2s_w\ell e^{-c\sqrt{\gamma} + \gamma \ell \Delta m^{(j)}}.
    \end{align}
    Using~\cref{lemma:W-convolution-outside-valley-bound}, we obtain therefore
    \begin{align}
        \int_{L_j}^{R_j} e^{V_{\rm eff}(y)}\dx y  \int_{L_j}^{R_j} e^{- V_{\rm eff}(z)} \dx z
        \le 4s_w\ell e^{-c\sqrt{\gamma} + \gamma \ell \Delta m^{(j)}} \quad \text{for $\gamma$ large enough}.
    \end{align}
    Together with \eqref{eq:a-R-j-integral-bound}, this completes the proof.
\end{proof}

We now have everything at hand to prove~\cref{prop:prop-laplace-hitting-time}.
\begin{proof}[Proof of~\cref{prop:prop-laplace-hitting-time}]
    Let's first prove~\eqref{eq:clusters-masses:equilibrium-potential}. By~\eqref{eq:clusters-masses:equilibrium-potential-solution}, the equilibrium potential $h_{a,b}$ is given by
    \begin{align}
        h_{a,b}(x) & = \frac{
            \int_x^b e^{V_{\rm eff}(y)} \dx y
        }
        {
            \int_a^b e^{V_{\rm eff}(y)} \dx y
        }.
    \end{align}
    It follows from~\eqref{eq:W-convolution-outside-valley-bound} that
    \begin{align}
        \lim_{\gamma\to\infty} e^{V_{\rm eff}(x)} =1 \quad \text{for all } x\in \R\setminus \bra*{\bigcup_{k=1}^K [L_k, R_k]}.
    \end{align}
    Moreover, we have that
    \begin{align}
        \lim_{\gamma\to\infty} e^{V_{\rm eff}(x)} = 0 \quad \text{for all } x \in \bigcup_{k=1}^K (L_k, R_k)
    \end{align}
    since $\abs*{V_{\rm eff}(x) - \gamma m^{(k)} \ell w\bra*{\ell^{-1}\bra*{x-X^{(k)}}}} \le C_{\rm coarse} \sqrt{\gamma}$ for all $x\in (L_k, R_k)$ by~\cref{lemma:V-eff-close-to-Wgl-sqrt-gamma} and because $w(y)<0$ on $(-s_w, s_w)$.
    Since $V_{\rm eff} \le 0$ on $\R$, the dominated convergence theorem implies that
    \begin{align}
        \lim_{\gamma\to\infty} h_{a,b}(x) = \frac{L_{j+1}- R_{j}}{L_{j+1} - R_{j-1}} = \frac{X^{(j+1)} - X^{(j)} - 2s_w\ell}{X^{(j+1)} - X^{(j-1)} - 4s_w\ell}
    \end{align}
    for all $x\in (L_j, R_j)$, which shows~\eqref{eq:clusters-masses:equilibrium-potential}.

    Next, we want to determine the asymptotic behaviour of $\condexpect*{\tau_{a,b} \given X^i_0 = x}$ which by~\eqref{eq:hitting-time-explicit} can be expressed as
    \begin{multline}
        \label{eq:waiting-time-solution}
        \condexpect*{\tau_{a,b} \given X^i_0 = x_0}  =
        - \int_a^{x_0} \int_a^y e^{V_{\rm eff}(y)- V_{\rm eff}(z)} \dx z \dx y  \\
        \quad+ \frac{
            \int_a^{x_0} e^{V_{\rm eff}(y)} \dx y
        }{
            \int_a^b e^{V_{\rm eff}(y)} \dx y
        }\cdot
        \int_a^b \int_a^y e^{V_{\rm eff}(y)- V_{\rm eff}(z)} \dx z \dx y
    \end{multline}
    Similar to the above, we have that
    \begin{align}
        \lim_{\gamma\to\infty} \frac{\int_a^{x_0} \exp\bra*{V_{\rm eff}(y)}\dx y}{\int_a^b \exp\bra*{V_{\rm eff}(y)} \dx y} = \frac{X^{(j)}-X^{(j-1)}-2s_w \ell}{X^{(j+1)} - X^{(j-1)} - 4s_w\ell} \quad \text{for all } x_0 \in (L_j, R_j),
    \end{align}
    which determines the asymptotic behaviour of the prefactor of the second term in~\eqref{eq:waiting-time-solution}. Moreover, we have that
    \begin{multline}
        \label{eq:waiting-time-second-term-lower-order-term}
        \int_a^b \int_a^y e^{V_{\rm eff}(y)- V_{\rm eff}(z)} \dx z \dx y
        \\ =  \int_a^{L_j} \int_a^y e^{V_{\rm eff}(y)- V_{\rm eff}(z)} \dx z \dx y
        + \int_{L_j}^{b} \int_a^y e^{V_{\rm eff}(y)- V_{\rm eff}(z)} \dx z \dx y.
    \end{multline}
    We claim that the second term has the asymptotic behaviour
    \begin{multline}
        \label{eq:waiting-times-final-claim}
        \int_{L_j}^{b} \int_a^y e^{V_{\rm eff}(y)- V_{\rm eff}(z)} \dx z \dx y\\ \sim
        \bra*{X^{(j+1)} - X^{(j)} - 2s_w\ell}
        \sqrt{\frac{2\pi \ell}{\gamma w''(0) m^{(j)}}} e^{\gamma\ell \Delta m^{(j)} - \frac{1}{2}}.
    \end{multline}
    By~\cref{lemma:waiting-time-minor-term},
    the first term on the right-hand side of~\eqref{eq:waiting-time-solution} as well as the first term on the right-hand side of~\eqref{eq:waiting-time-second-term-lower-order-term} is of lower order
    than the right-hand side of~\eqref{eq:waiting-times-final-claim}
    as $\gamma \to\infty$.
    Therefore, the proof of~\cref{prop:prop-laplace-hitting-time} is complete once we have shown~\eqref{eq:waiting-times-final-claim}.

    To show~\eqref{eq:waiting-times-final-claim}, first note that
    \begin{align}
        [R_j,L_{j+1}] \times [L_j, R_j] \subset
        \set*{(y,z) \in \R^2 \st  L_j \le y \le b, ~ a \le z \le y}
        \subset [L_j,b]\times [a,b].
    \end{align}
    In particular, we have that
    \begin{multline}
        \label{eq:waiting-time-dominant-term-lower-upper-bound}
        \int_{R_j}^{L_{j+1}} e^{V_{\rm eff}(y)} \dx y \int_{L_j}^{R_j} e^{-V_{\rm eff}(z)} \dx z
        \\
        \le \int_{L_j}^{b} \int_a^y e^{V_{\rm eff}(y)- V_{\rm eff}(z)} \dx z \dx y
        \le \int_{L_j}^{b} e^{V_{\rm eff}(y)} \dx y \int_a^b e^{-V_{\rm eff}(z)} \dx z.
    \end{multline}
    We want to show that the upper and lower bounds in~\eqref{eq:waiting-time-dominant-term-lower-upper-bound} are asymptotically equivalent as $\gamma \to\infty$, which will then show~\eqref{eq:waiting-times-final-claim}. We first deal with the lower bound in \eqref{eq:waiting-time-dominant-term-lower-upper-bound}.
    As before, it follows from the dominated convergence theorem that
    \begin{align}
        \lim_{\gamma\to\infty}   \int_{R_j}^{L_{j+1}} e^{V_{\rm eff}(y)} \dx y = \lim_{\gamma\to\infty}\int_{L_j}^{b} e^{V_{\rm eff}(y)} \dx y = L_{j+1} - R_j = X^{(j+1)} - X^{(j)} - 2s_w\ell.
    \end{align}
    In order to estimate $\int_{L_j}^{R_j} e^{-V_{\rm eff}(z)} \dx z$, we use the approximation of $V_{\rm eff}$ around its minimum near $X^{(j)}$ from~\cref{lemma:V-eff-parabolic-approximation-rigorous}. Define
    \begin{align}
        f^{\pm}_j(x):= f_j(x) \pm C_{\rm par} \cdot L''' \cdot \gamma \cdot \bra*{x-X^{(j)}}^4
    \end{align}
    and choose $\beta\in(0,r_w)$ so small that $f^-_j$ has its unique minimum on $[X^{(j)}- \beta\ell, X^{(j)}+\beta\ell]$ at $X^{(j)}$. This works since  $(f^-_j)'(X^{(j)})=0$ and $(f^-_j)''(X^{(j)})>0$.
    Then,
    \cref{lemma:V-eff-parabolic-approximation-rigorous} implies for all $z\in [X^{(j)}- \beta\ell, X^{(j)}+ \beta\ell]$ that
    \begin{align}
        f^-_j(z) - \calE & \le V_{\rm eff}(z) \le f^+_j(z) + \calE \quad \text{where } \calE := C_{\rm par} \gamma \exp\bra*{-\frac{w''(0) m^{(j)} \beta^2}{4} \gamma \ell}.
    \end{align}
    Therefore we obtain
    \begin{align}
        \label{eq:waiting-time-dominant-term-liminf-limsup-bound}
        \limsup_{\gamma \to\infty} \frac{\int_{X^{(j)}- \beta \ell}^{X^{(j)}+ \beta\ell} e^{-f^+_j(z)} \dx z}{\int_{X^{(j)}- \beta \ell}^{X^{(j)}+ \beta\ell} e^{-V_{\rm eff}(z)} \dx z} \le 1 \le \liminf_{\gamma \to\infty} \frac{\int_{X^{(j)}- \beta \ell}^{X^{(j)}+ \beta\ell} e^{-f^-_j(z)} \dx z}{\int_{X^{(j)}- \beta \ell}^{X^{(j)}+ \beta\ell} e^{-V_{\rm eff}(z)} \dx z}.
    \end{align}
    By Laplace's method, we have the asymptotic equivalences
    \begin{align}
        \label{eq:laplace-dominating-term}
        \int_{X^{(j)}- s_w\ell/2}^{X^{(j)}+ s_w\ell/2} e^{-f_j^\pm(z)} \dx z
        \sim \sqrt{2\pi \sigma_j^2} e^{\gamma\ell \Delta m^{(j)} - \frac{1}{2}} = \sqrt{\frac{2\pi \ell}{\gamma w''(0) m^{(j)}}} e^{\gamma\ell \Delta m^{(j)} - \frac{1}{2}}
    \end{align}
    for both $f_j^+$ and $f_j^-$,
    as $\gamma \to\infty$.
    In particular, we obtain from~\eqref{eq:waiting-time-dominant-term-liminf-limsup-bound} that
    \begin{align}
        \lim_{\gamma\to\infty} \frac{\int_{X^{(j)}- s_w\ell/2}^{X^{(j)}+ s_w\ell/2} e^{-V_{\rm eff}(z)} \dx z}{\sqrt{\frac{2\pi \ell}{\gamma w''(0) m^{(j)}}} e^{\gamma\ell \Delta m^{(j)} - \frac{1}{2}}} = 1.
    \end{align}
    Now splitting the integral $\int_{L_j}^{R_j} e^{-V_{\rm eff}(z)} \dx z$ as    
    \begin{align}
        \int_{L_j}^{R_j} e^{-V_{\rm eff}(z)} \dx z = \bra*{\int_{X^{(j)}- s_w\ell}^{X^{(j)}- \beta\ell/2}  + \int_{X^{(j)}- \beta\ell/2}^{X^{(j)}+ \beta\ell/2} + \int_{X^{(j)}+ \beta\ell/2}^{X^{(j)}+ s_w\ell} }e^{-V_{\rm eff}(z)} \dx z,
        \label{eq:Veeff-Lj-Rj-3-split}
    \end{align}
    it follows from~\cref{lemma:V-eff-close-to-Wgl-sqrt-gamma} in the same way as in \cref{lemma:waiting-time-minor-term} that the first and the third integral term on the right-hand side of~\eqref{eq:Veeff-Lj-Rj-3-split} are of lower order than the middle term, which satisfies~\eqref{eq:laplace-dominating-term} as $\gamma \to\infty$.

    This shows that the lower bound in~\eqref{eq:waiting-time-dominant-term-lower-upper-bound} has the asymptotic behaviour as claimed in~\eqref{eq:waiting-times-final-claim}. The upper bound in~\eqref{eq:waiting-time-dominant-term-lower-upper-bound} can be shown to have the same asymptotic behaviour by a similar argument, based on the fact that the additional domain integrated over is of lower order by~\cref{lemma:W-convolution-outside-valley-bound} and~\cref{lemma:waiting-time-minor-term}. This completes the proof.
\end{proof}

\printbibliography
\end{document}